\title{Large Complex Correlated Wishart Matrices:\\
The Pearcey Kernel and Expansion at the Hard Edge.} 
\author{
\ Walid Hachem
\footnote{
CNRS LTCI; T\'{e}l\'{e}com ParisTech, 46 rue Barrault, 75634 Paris Cedex 13, France. Email: walid.hachem@telecom-paristech.fr}\;\;,\;
\ Adrien Hardy  
\footnote{Department of Mathematics, KTH Royal Institute of Technology, Lindstedtsv\"agen 25, 10044 Stockholm, Sweden. Email: ahardy@kth.se}\;\;,\; 
\ Jamal Najim 
\footnote{CNRS LIGM; Universit\'e Paris-Est, 
Cit\'e Descartes, 
5 Boulevard Descartes, 
Champs sur Marne, 
77 454 Marne-la-Vall\'ee Cedex 2,
France. Email: najim@univ-mlv.fr}}
\numberwithin{equation}{section}
\def\Tr{\mathop{\mathrm{Tr}}\nolimits}
\def\re{\mathop{\mathrm{Re}}\nolimits}
\newtheorem{theorem}{Theorem}[]
\newtheorem{lemma}{Lemma}[section]
\newtheorem*{theorem*}{Theorem}
\newtheorem{corollary}[lemma]{Corollary}
\newtheorem{proposition}[theorem]{Proposition}
\theoremstyle{definition} 
\newtheorem{definition}[lemma]{Definition}
\newtheorem{assumption}{Assumption}[]
\newtheorem{Remark}[lemma]{Remark}
\newenvironment{remark}{\begin{Remark}\rm}{\end{Remark}}
\newtheorem{Example}[lemma]{Example}
\newcommand{\eq}{\begin{equation}}
\newcommand{\qe}{\end{equation}}
\newcommand{\R}{\mathbb{R}}
\newcommand{\C}{\mathbb{C}}
\newcommand{\p}{\mathbb{P}}
\newcommand{\K}{ {\rm K}}
\newcommand{\A}{{\rm A}}
\newcommand{\B}{{\rm B}}
\newcommand{\E}{{\rm E}}
\newcommand{\Q}{{\rm Q}}
\newcommand{\Up}{\Upsilon^+}
\newcommand{\Um}{\Upsilon^{-}}
\newcommand{\Ttilde}{\Xi}
\DeclareMathOperator{\supp}{Supp}
\DeclareMathOperator{\interior}{int}
\DeclareMathOperator{\sign}{sign}
\newcommand{\Be}{{\rm Be}} 
\newcommand{\Kbe}{\K_{\Be}^{(\alpha)}}
\newcommand{\Pe}{{\rm Pe}} 
\newcommand{\Kpe}{\K_{\Pe}^{(\tau)}}
\newcommand{\Kcheck}{\check \K_{\Pe}^{(\tau)}}
\newcommand{\bt}{\textbf}
\newcommand{\bs}{\boldsymbol}
\newcommand{\bv}{\mathbf}
\renewcommand{\leq}{\leqslant}
\renewcommand{\geq}{\geqslant}
\renewcommand{\le}{\leqslant}
\renewcommand{\ge}{\geqslant}
\renewcommand{\epsilon}{ \varepsilon}
\renewcommand{\d}{ {\rm d}}
\renewcommand{\frak}{\mathfrak} 
\renewcommand{\emptyset}{\varnothing}
\renewcommand{\Im}{\frak{Im}} 
\renewcommand{\re}{\frak{Re}}
\newcommand{\red}{\color{red}}
\newcommand{\blue}{\color{blue}}
\newcommand{\bigO}[1]{{\mathcal O} \left( {#1}\right)}
\begin{document}
\maketitle

\begin{abstract} 
We study the eigenvalue behaviour of large complex correlated Wishart matrices near an interior point of the limiting spectrum where the density vanishes (cusp point), and refine the existing results at the hard edge as well. More precisely, under mild assumptions for the population covariance matrix, we show that the limiting density  vanishes at generic cusp points like a cube root, and that the local eigenvalue behaviour is described by means of the Pearcey kernel if an extra decay assumption is satisfied. As for the hard edge, we show that the density blows up like an inverse square root at the origin. Moreover, we provide an explicit formula for the $1/N$ correction term for the fluctuation of the smallest random eigenvalue.


\end{abstract}

\noindent \textbf{AMS 2000 subject classification:} Primary 15A52, Secondary 15A18, 60F15. \\
\noindent \textbf{Key words and phrases:} Large random matrices, 
Wishart matrix, Pearcey kernel, Bessel kernel. \\

\setcounter{tocdepth}{2}

\setcounter{tocdepth}{2}
\tableofcontents

\section{Introduction}


Empirical covariance matrices are natural random matrix models in applied mathematics and their study goes back at least to the work of Wishart \cite{wishart-1928}.
In the large dimensional regime, where both the size of the observations and of the sample go to infinity at the same speed, Mar\v cenko and Pastur provided in the seminal paper  \cite{mar-and-pas-67} the first description of the limiting spectral distribution for such matrices, see also \cite{silverstein-choi-1995}. For instance, this limiting distribution has a continuous density on $(0,\infty)$; its support is compact if the spectral norm of the population covariance matrix is bounded; it may include the origin and may also present several connected components. 

Afterwards, attention turned to the local behaviour of the random eigenvalues near points of interest in the limiting spectrum, like positive endpoints (soft edges), see e.g. \cite{Jo,Joh01,BBP-2005,EK,HHN-preprint}, interior points were the density vanishes (cusp points) \cite{Mo2}, or  the origin when it belongs to the spectrum (hard edge) \cite{F,HHN-preprint}.  Complex correlated Wishart matrices, namely covariance matrices with complex Gaussian   entries, play a particular role in such investigations since their random eigenvalues form a determinantal point process. Indeed, for determinantal point processes, a local asymptotic analysis can often be performed by using  tools from complex analysis such as saddle point analysis or Riemann-Hilbert techniques. In the more general setting of non-necessarily Gaussian entries, one then typically shows that the local behaviours are the same as in the Gaussian case by comparison or interpolation methods, see e.g. \cite{knowles-yin-2014-preprint,LS14}.  


For complex correlated Wishart matrices, a fairly complete picture of the local fluctuations at every edges of the limiting spectrum has been obtained in the recent work \cite{HHN-preprint}, provided that a regularity condition is satisfied. This condition essentially warrants the local fluctuations to follow the usual  laws from random matrix theory.  For instance, if one considers a soft edge of the limiting spectrum, then this regularity condition ensures that the limiting density vanishes like a square root at the edge, and the fluctuations of the associated extremal eigenvalues follow the Tracy-Widom law involving the Airy kernel.   As for the hard edge, when it is present, the fluctuations are described instead by means of the Bessel kernel.



 The aim of this work is twofold. First, we investigate the local behaviour of the eigenvalues near a cusp point which satisfies the regularity condition: We show that the limiting density vanishes like a cube root near the cusp point (hence justifying the name) and, under an extra assumption on the decay of a speed parameter, we establish that the eigenvalues local fluctuations near the cusp point are described  by means of the Pearcey kernel. 
 
 
Our second contribution is to strengthen the results of \cite{HHN-preprint} concerning the local analysis at the hard edge: We show that the density behaves like an inverse square root near the origin, and we provide an explicit formula for the next-order correction term for the fluctuations. This last  result is motivated by the recent work \cite{EGP-preprint} by Edelman, Guionnet and P\'ech\'e where they conjecture a precise formula for the next-order term for the non-correlated Wishart matrix, a conjecture  then proven right by Bornemann \cite{bornemann-2014-note} and Perret and Schehr \cite{PS}, with different strategies. Our result  hence extends this formula, with an alternative proof, to the more general setting of  correlated Wishart matrices. 
 
The reader interested in a  pedagogical overview on the results from \cite{HHN-preprint} and the present work may have a look at the survey \cite{HHN-esaim-preprint};   it also contains further information on the matrix model and  lists some open problems.




Let us also stress that, at the technical level, the study of this matrix model shares similar features with the study of the additive perturbation of a GUE random matrix \cite{capitaine-peche-2014-preprint}, and random Gelfand-Tsetlin patterns \cite{DM1,DM2}, although each model ultimately brings up its own share of technicalities.

 We provide precise statements for our results in Section \ref{sec:Main results}, and then prove the results on the density behaviour in Section \ref{sec:density behaviours},  the cusp point fluctuations in Section \ref{sec:Pearcey}, and the expansion at the hard edge in Section \ref{sec:proof-expansion}.

\paragraph*{Acknowledgements.} 
The authors are pleased to thank Folkmar Bornemann,  Antti Knowles and Anthony Metcalfe for fruitful discussions.
During this work, AH was supported by the grant KAW 2010.0063 from the
Knut and Alice Wallenberg Foundation. 
The work of WH and JN was partially supported by the program 
``mod\`eles num\'eriques'' of the French Agence Nationale de la Recherche 
under the grant ANR-12-MONU-0003 (project DIONISOS). Support of Labex B\'EZOUT from Universit\'e Paris Est is also acknowledged. 

\section{Statement of the main results}
\label{sec:Main results}

\subsection{The matrix model and assumptions}
\label{subsec: matrix model}

The random matrix model of interest here is the $N\times N$ matrix 
\eq
\label{main matrix model}
{\bv M}_N = \frac 1N {\bf X}_N {\bf \Sigma}_N{\bf X}_N^* 
\qe
where  ${\bf X}_N$ is a $N\times n$ matrix with independent and identically
distributed (i.i.d.) entries with zero mean and unit variance, and 
${\bf \Sigma}_N$ is a $n\times n$ deterministic positive definite Hermitian 
matrix. The random matrix $\bv M_N$ thus has $N$ non-negative eigenvalues, but which may be
of different nature: The smallest $N-\min(n,N)$ eigenvalues are
deterministic and all equal to zero, whereas the other $\min(n,N)$ eigenvalues
are random. The problem is then to describe the asymptotic behaviour of the
random eigenvalues of $\bv M_N$,  as the size of the matrix grows to infinity. As for the asymptotic regime of interest, we let both the number of rows and columns of ${\bf X}_N$  grow to infinity at the same speed:  We assume $n=n(N)$ and $n,N\to \infty$  so that 
\eq
\label{evdistrMN}
\lim_{N\rightarrow\infty} \frac{n}{N}=\gamma\in (0,\infty)\ .
\qe 
This regime will be simply referred to as  $N\to \infty$ in the sequel.

Let us mention that the $n\times n$ random covariance matrix  
\[
\widetilde{\bv M}_N = \frac 1N 
{\bf \Sigma}_N^{1/2} {\bf X}_N^* {\bf X}_N {\bf \Sigma}_N^{1/2}\ ,
\]
which is also under consideration, has exactly
the same random eigenvalues as $\bv M_N$, and hence  results on the random
eigenvalues can be carried out from one model to the other immediately. 

Our first assumption is that the entries of $\bs X_N$ are complex Gaussian. As we shall state later on, this assumption is fundamental for our local eigenvalue behaviour analysis, but not for our results on the limiting density behaviour, see Remark \ref{exact assumption density}.


\begin{assumption} 
\label{ass:gauss} 
The entries of $\bv X_N$ are i.i.d. standard complex Gaussian random variables.
\end{assumption}

Considering now the matrix $\bv \Sigma_N$, we denote by $0<\lambda_1\leq \cdots\leq \lambda_n$ its eigenvalues and by 
\eq
\label{nuN}
\nu_N=\frac 1n \sum_{j=1}^n \delta_{\lambda_j} 
\qe  
its spectral measure. We also make the following assumption. 

\begin{assumption}\  
\label{ass:nu}
\begin{enumerate}
\item[(a)]
For $N$ large enough, the eigenvalues of $\bv \Sigma_N$ stay in a compact 
subset of $(0,+\infty)$ independent of $N$, i.e.
\eq
0\ <\ \liminf_{N\rightarrow\infty}\lambda_1\ ,\quad 
\sup_{N}\lambda_n\ <\ +\infty. 
\qe
\item[(b)]
The measure $\nu_N$ weakly converges towards a limiting probability measure
$\nu$ as $N\rightarrow\infty$, namely
\eq
\label{weak conv nuN}
\frac{1}{n}\sum_{j=1}^n f(\lambda_j)\xrightarrow[N\to\infty]{} \int f(x)\nu(\d x)
\qe
for every bounded and continuous function $f$.
\end{enumerate}
\end{assumption}
Again, Assumption \ref{ass:nu}(a) is necessary for our results on the local eigenvalue behaviour, but our results on the limiting density behaviour require a weaker assumption, see Remark \ref{exact assumption density}.

We now turn to the description of the asymptotic eigenvalue distribution. 

\subsection{Limiting eigenvalue distribution}
\label{sec:fixed-point}

Consider the empirical distribution of the eigenvalues $(x_i)$ of ${\bf M}_N$, namely
\[
\mu_N  =\frac 1N \sum_{i=1}^N \delta_{x_i}\, .
\] 
Since the seminal work of Mar\v cenko and Pastur~\cite{mar-and-pas-67}, it is known that this measure
 almost surely (a.s.) converges weakly towards a 
limiting probability measure $\mu$ with compact support, provided that Assumption \ref{ass:nu} holds true:
\eq
\label{mu}
\frac{1}{ N}\sum_{i=1}^N f(x_i)\xrightarrow[N\to\infty]{a.s.} 
\int f(x)\mu(\d x)
\qe
for every bounded and continuous function $f$. As a probability measure, $\mu$
is  characterized by its Cauchy-Stieltjes transform, which is the holomorphic function 
defined by 
\eq
\label{CS m}
m(z)=\int \frac{1}{z-\lambda}\,\mu(\d \lambda),\qquad 
z\in \C_+ =\big\{z\in\C : \; {\Im}(z)>0\big\}\, .
\qe
Mar\v cenko and Pastur proved that $m(z)$ is   the unique solution $m \in \C_-=\{z\in\C : \; {\Im}(z)<0\}$ of the fixed-point equation
 \begin{equation}
 \label{cauchy eq} 
m = \left( z - \gamma \int \frac{\lambda}{1 - m \lambda} \nu(\d\lambda) 
\right)^{-1}  , 
 \end{equation}
where we recall that $\gamma$ has been introduced in  \eqref{evdistrMN} and $\nu$ is the weak limit  of $\nu_N$, see \eqref{weak conv nuN}. 
Thanks to this equation, Silverstein and Choi then showed in~\cite{silverstein-choi-1995} that $\mu(\{0\}) = (1-\gamma)^+$ and  $\lim_{z\in\C_+ \to x} m(z) \equiv m(x)$ exists for every $x \in
\R^* = \R - \{ 0 \}$. Consequently, the function $m(z)$ can be continuously
extended to $\C_+ \cup \R^*$ and, furthermore, $\mu$ has a density on $(0,\infty)$
given by 
\eq
\label{rho}
\rho(x) = - \frac1\pi {\Im} \, (m(x))\, .
\qe 
We therefore have the 
representation 
\eq
\label{mu decomposition}
\mu(\d x)=\left(1-\gamma\right)^+ \, \delta_0 + \rho(x)\d x\ .
\qe
They also obtained that $\rho(x)$ is real analytic wherever it is positive, and
they moreover characterized the (compact) support $\supp(\rho)$ of the measure $\rho(x)\d x$ by building on  
 ideas from~\cite{mar-and-pas-67}. More specifically, one can see that the 
function $m(z)$ has an explicit inverse (for the composition law) on $m(\C_+)$ 
given  by
\begin{equation}
\label{g(m)} 
g(m) = \frac{1}{m} + 
    \gamma \int \frac{\lambda}{1 - m \lambda} \,\nu(\d \lambda)\ .
\end{equation}
If we introduce the open subset of the real line
\begin{equation}\label{def:D}
D=\bigl\{x\in\R : \ x\neq 0, \; x^{-1}\notin \supp(\nu)\bigr\} \ ,
\end{equation}
then the map $g$ analytically extends to $\C_+ \cup \C_- \cup D$. It was shown in \cite{silverstein-choi-1995} that 
\eq
\label{outside support}
\R - \supp(\rho) = \big\{ g(m) : \;  m\in D, \ g'(m) < 0 \big\} .
\qe

Equipped with the definitions of $m$, $g$ and $D$, we are now able to state our  results concerning the behaviours of the limiting density $\rho(x)$ near a cusp point or at the hard edge. 




\subsection{Density behaviour  near a cusp point} 

\begin{figure}[h]
\centering
\includegraphics[width=0.7\linewidth]{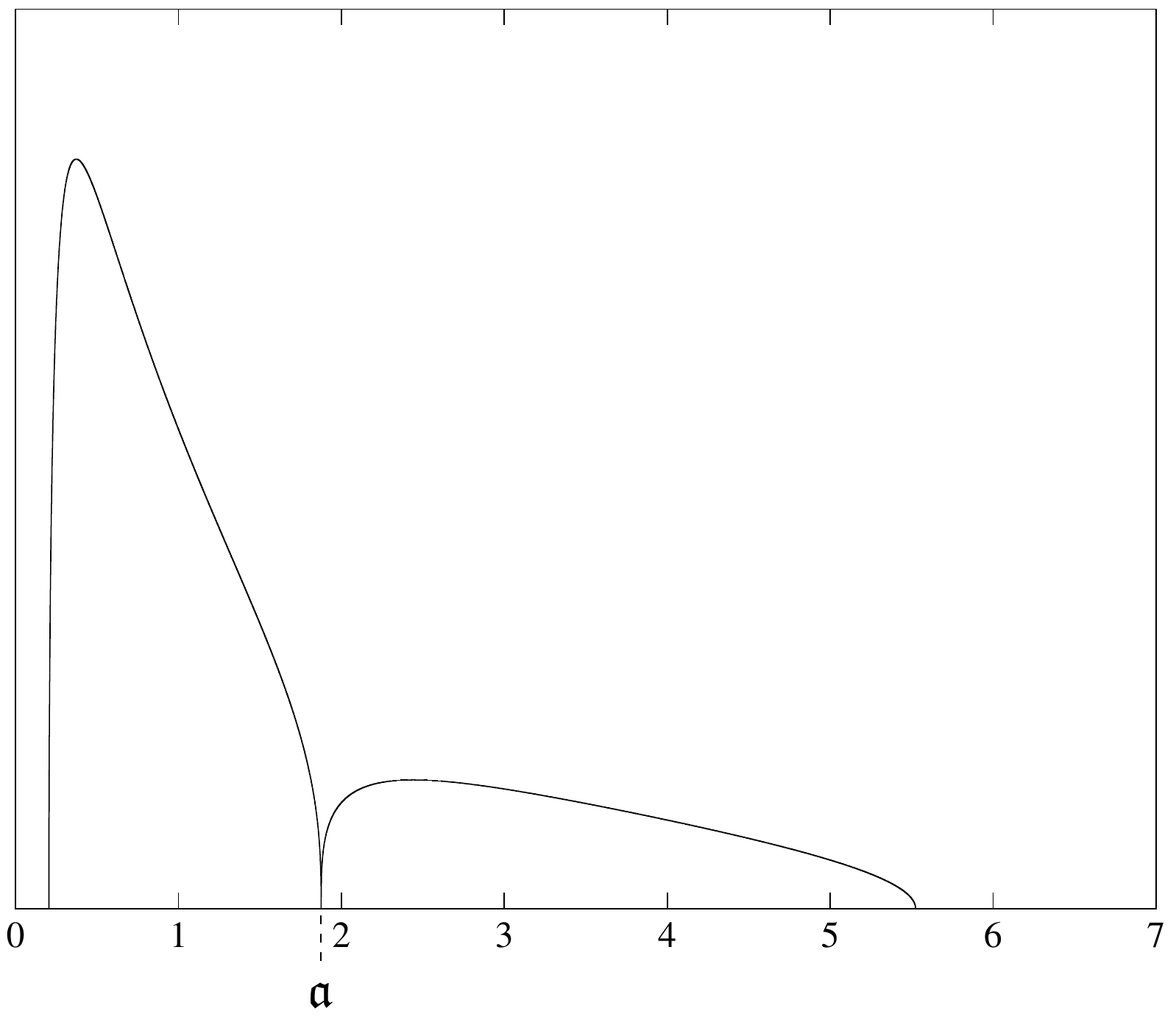}
\caption{Plot of  $\rho(x)$ with parameters $\gamma\simeq 0.336$ and 
$\nu = 0.7\delta_1 + 0.3\delta_3$ with a cusp point $\frak a$. }
\label{fig:cusp}
\end{figure}


As stated in the introduction, we define a \textbf{cusp point} $\frak a$ as an interior point where the density vanishes, namely 
$\frak a \in \interior(\supp(\rho))$ such that $\rho(\frak a) = 0$. In particular,  $\frak c=m(\frak a)\in\R$  by virtue of \eqref{rho}. Our first result states that the density $\rho(x)$ behaves like a cube root near a cusp point, provided that $\frak c\in D$.


\begin{proposition}
\label{cusp=>inflexion} 
Let $\frak a \in \interior(\supp(\rho))$ be such that $\rho(\frak a) = 0$, and assume that $\frak c = m(\frak a) \in D$. Then we have
$$
g(\frak c) = \frak a\, ,\qquad g'(\frak c) = g''(\frak c) = 0\, ,\qquad  \textrm{and} \quad g^{(3)}(\frak c) > 0\ .
$$
Moreover, 
\begin{equation} 
\label{cubic-root} 
\rho(x) = \frac{\sqrt{3}}{2\pi} 
\left(\frac{6}{g^{(3)}(\frak c)}\right)^{1/3}  \big|x-\frak a\big|^{1/3}(1+o(1)) \ ,\qquad x\rightarrow \frak a\, .
\end{equation} 
In particular, there exists $\eta>0$ such that for every $x\in (\frak a -\eta,\frak a +\eta)\setminus \{\frak a\}$, we have $\rho(x)>0$.
\end{proposition}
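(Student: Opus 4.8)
The plan is to transfer everything to the inverse function $g$, which is real-analytic near $\frak c$ since $\frak c\in D$, and to combine the identity $g(m(z))=z$ on $\C_+$ with $\rho=-\tfrac1\pi\Im m$ on $\R^*$. First, $g(\frak c)=\frak a$: for $z_n\in\C_+$ with $z_n\to\frak a$, continuity of $m$ on $\C_+\cup\R^*$ gives $m(z_n)\to m(\frak a)=\frak c$, and continuity of $g$ at $\frak c$ gives $g(m(z_n))\to g(\frak c)$; since $g(m(z_n))=z_n\to\frak a$, we get $g(\frak c)=\frak a$. Next, $g'(\frak c)=0$: otherwise $g$ would be conformal near $\frak c$ and, being real on $D$, would map the real segment through $\frak c$ onto a real segment through $\frak a$, so $m(x+\ii0)=g^{-1}(x)\in\R$ and hence $\rho\equiv0$ near $\frak a$ — contradicting $\frak a\in\interior(\supp\rho)$. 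Let now $k\ge2$ be the order of vanishing of $g-\frak a$ at $\frak c$, so $g'(m)=\tfrac{g^{(k)}(\frak c)}{(k-1)!}(m-\frak c)^{k-1}(1+o(1))$ near $\frak c$. If $k$ is even, or $k$ is odd with $g^{(k)}(\frak c)<0$, then $g'<0$ on a punctured (one- or two-sided) neighbourhood of $\frak c$ contained in $D$, whence by \eqref{outside support} $g$ maps that neighbourhood into $\R\setminus\supp(\rho)$; letting $m\to\frak c$ gives $\frak a\in\overline{\R\setminus\supp(\rho)}$, again a contradiction. Hence $k$ is odd and $g^{(k)}(\frak c)>0$.

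The remaining point for the first display of the Proposition is to exclude $k\ge5$, which is the core of the argument. If $k\ge4$, then $g'(\frak c)=g''(\frak c)=g^{(3)}(\frak c)=0$; differentiating \eqref{g(m)} and setting $u=u(\lambda)=\lambda/(1-\frak c\lambda)$, these identities become $\gamma\int u^2\,\d\nu=\frak c^{-2}$, $\gamma\int u^3\,\d\nu=-\frak c^{-3}$ and $\gamma\int u^4\,\d\nu=\frak c^{-4}$, which force equality in the Cauchy--Schwarz inequality $\big(\int u^3\,\d\nu\big)^2\le\int u^2\,\d\nu\int u^4\,\d\nu$. Since $\supp(\nu)\subset(0,\infty)$ (so $u$ does not vanish $\nu$-a.e.) and $\lambda\mapsto u(\lambda)$ is injective, this makes $u$ constant $\nu$-almost everywhere, forcing $\nu$ to be a single Dirac mass; but for a point mass $g'$ is rational with numerator of degree $\le2$ and cannot vanish to order $k-1\ge3$ at $\frak c\notin\{0,1/\lambda_0\}$ — equivalently, the limiting law is then Mar\v cenko--Pastur, whose support is a single interval with no interior cusp, contradicting the presence of $\frak a$. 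Thus $k=3$: $g'(\frak c)=g''(\frak c)=0$ and $g^{(3)}(\frak c)>0$.

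Finally I would derive \eqref{cubic-root} and the positivity statement simultaneously. Extracting the analytic cube root (normalised to $1$ at $w=0$) of the nonvanishing factor in $g(\frak c+w)-\frak a=\tfrac16 g^{(3)}(\frak c)\,w^3(1+O(w))$ yields a biholomorphism $\phi$ near $0$ with $\phi(0)=0$, $\phi'(0)=(g^{(3)}(\frak c)/6)^{1/3}>0$ and $g(\frak c+w)=\frak a+\phi(w)^3$; in particular $\phi^{-1}(\zeta)=(6/g^{(3)}(\frak c))^{1/3}\zeta(1+O(\zeta))$. For $z$ in a small half-disc $\{z:|z-\frak a|<r,\ \Im z\ge0\}$, set $\psi(z)=\phi(m(z)-\frak c)$: this is continuous, holomorphic for $\Im z>0$, tends to $0$ at $\frak a$, and satisfies $\psi(z)^3=g(m(z))-\frak a=z-\frak a$, so on the connected punctured half-disc it is one of the three continuous branches of $(z-\frak a)^{1/3}$. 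Since $m(\C_+)\subset\C_-$ and $\phi'(0)>0$, one has $\arg\psi(\frak a+\ii\varepsilon)=\arg(m(\frak a+\ii\varepsilon)-\frak c)+o(1)\in(-\pi,0)$ for small $\varepsilon>0$, which selects the branch with $\psi(x+\ii0)=|x-\frak a|^{1/3}e^{-2\pi\ii/3}$ for $x>\frak a$ and $\psi(x+\ii0)=|x-\frak a|^{1/3}e^{-\ii\pi/3}$ for $x<\frak a$ — in both cases $\Im\psi(x+\ii0)=-\tfrac{\sqrt3}{2}|x-\frak a|^{1/3}$. Then $m(x+\ii0)-\frak c=\phi^{-1}(\psi(x+\ii0))=(6/g^{(3)}(\frak c))^{1/3}\psi(x+\ii0)(1+o(1))$, and taking imaginary parts gives $\rho(x)=-\tfrac1\pi\Im m(x+\ii0)=\tfrac{\sqrt3}{2\pi}(6/g^{(3)}(\frak c))^{1/3}|x-\frak a|^{1/3}(1+o(1))$ as $x\to\frak a$; since the right-hand side is strictly positive for $x\ne\frak a$, this also provides $\eta>0$ with $\rho>0$ on $(\frak a-\eta,\frak a+\eta)\setminus\{\frak a\}$.

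The step I expect to be the main obstacle is the exclusion of $k\ge5$: the other steps are soft consequences of the support description \eqref{outside support} and elementary complex analysis, whereas this one genuinely uses the structure of the fixed-point equation through the equality case of Cauchy--Schwarz, which collapses $\nu$ to a point mass. A secondary point requiring care is the branch identification for $\psi$ in the last paragraph, which hinges on $m$ mapping $\C_+$ into $\C_-$ and on $\phi'(0)$ being a positive real.
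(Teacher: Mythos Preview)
Your proof is correct, but the route to $g^{(3)}(\frak c)>0$ differs from the paper's and is more elaborate than needed. The paper does not argue by excluding $k\ge5$ via the Cauchy--Schwarz equality case; instead, once $g'(\frak c)=g''(\frak c)=0$ is known, it simply substitutes the identity coming from $g''(\frak c)=0$ (namely $-\frak c^{-3}=\gamma\int\lambda^3(1-\lambda\frak c)^{-3}\,\nu(\d\lambda)$) into the expression for $g^{(3)}(\frak c)$ and obtains the closed form
\[
\frac16\,g^{(3)}(\frak c)=\gamma\int \frac{\lambda^3}{\frak c(1-\lambda\frak c)^4}\,\nu(\d\lambda),
\]
which is manifestly nonzero; combined with the fact (which you also prove) that $g'>0$ on a punctured real neighbourhood of $\frak c$, this forces $g^{(3)}(\frak c)>0$. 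So what you flag as ``the main obstacle'' is in fact a two-line algebraic identity, whereas your Cauchy--Schwarz reduction to a point mass, though correct and pleasant, is a detour. Conversely, your argument for $g'(\frak c)=0$ (local invertibility of $g$ would make $m$ real on a neighbourhood of $\frak a$, contradicting $\frak a\in\interior(\supp\rho)$) is more elementary than the paper's, which goes through the auxiliary function $G(m)=-|m|^{-2}+\gamma\int\lambda^2|1-\lambda m|^{-2}\,\nu(\d\lambda)$ and the identity $\Im z=\Im m(z)\,G(m(z))$. Your derivation of the cube-root asymptotics and the positivity on a punctured neighbourhood matches the paper's in spirit; the paper establishes the positivity statement separately (again via the function $G$) before the asymptotics, while you read it off directly from the formula, which is a minor streamlining.
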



\begin{remark} In the forthcoming local analysis for the random eigenvalues near a cusp point, we shall focus on cusp points $\frak a$'s satisfying a regularity condition. This extra assumption automatically yields that $m(\frak a)\in D$, see Remark \ref{RC -> cubic}.
\end{remark}

Conversely, we have the following result. 
\begin{proposition}
\label{inflexion=>cusp} 
If $\frak c \in D$ satisfies $g'(\frak c) = g''(\frak c) = 0$, then
 $\frak a = g(\frak c)$ belongs to $\interior(\supp(\rho))$ and $\rho(\frak a) = 0$. 
In particular, $g^{(3)}(\frak c) > 0$ and $\rho(x)$ satisfies~\eqref{cubic-root}. 
\end{proposition}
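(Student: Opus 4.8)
\subsection*{Proof proposal for Proposition~\ref{inflexion=>cusp}}

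The plan is to run the computation behind Proposition~\ref{cusp=>inflexion} backwards, the genuinely new ingredient being a branch‑selection argument based on the uniqueness of the solution of the Mar\v{c}enko--Pastur equation in $\C_-$. Since $\frak c\in D$, the function $g$ is holomorphic in a complex neighbourhood of $\frak c$ and real on the real axis there, hence has real Taylor coefficients at $\frak c$, with $g(\frak c)=\frak a$ and $g'(\frak c)=g''(\frak c)=0$ by hypothesis. The first step is to pin down the sign of $g^{(3)}(\frak c)$. Differentiating $g(m)=m^{-1}+\gamma\int\lambda(1-m\lambda)^{-1}\nu(\d\lambda)$ and writing $f(\lambda)=\lambda(1-\frak c\lambda)^{-1}$, the hypotheses read $\gamma\int f^{2}\,\d\nu=\frak c^{-2}$ and $\gamma\int f^{3}\,\d\nu=-\frak c^{-3}$, while $g^{(3)}(\frak c)=6\bigl(\gamma\int f^{4}\,\d\nu-\frak c^{-4}\bigr)$. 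Cauchy--Schwarz, $\bigl(\int f^{3}\,\d\nu\bigr)^{2}\le\int f^{2}\,\d\nu\int f^{4}\,\d\nu$, then gives $g^{(3)}(\frak c)\ge 0$, with equality only if $f$ is $\nu$-a.e.\ constant; since $\lambda\mapsto f(\lambda)$ is strictly monotone on $\supp\nu$, this would force $\nu$ to be a single Dirac mass, which is excluded because a direct elimination shows that $g'$ and $g''$ cannot then vanish simultaneously. Hence $g^{(3)}(\frak c)>0$.

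With $g^{(3)}(\frak c)>0$, Taylor's theorem provides a biholomorphism $\psi$ of a disc $\{|w|<\epsilon\}$ onto a neighbourhood of $\frak c$, with real coefficients, $\psi(0)=\frak c$ and $\psi'(0)=(6/g^{(3)}(\frak c))^{1/3}>0$, such that $g(\psi(w))=\frak a+w^{3}$. Let $\Sigma$ be the image under $\psi$ of the sector $\{\,0<|w|<\epsilon,\ \arg w\in(-2\pi/3,-\pi/3)\,\}$. For $\epsilon$ small, $\Sigma\subset\C_-$, because $\Im\psi(w)=\psi'(0)\Im w+O(|w|^{2})$ and $\Im w<0$ on the sector with $|\Im w|\ge\frac{\sqrt 3}{2}|w|$, so the linear term dominates; moreover $g$ maps $\Sigma$ \emph{biholomorphically} onto $\C_+\cap B(\frak a,\epsilon^{3})$, since $w\mapsto w^{3}$ maps this sector (opening $\pi/3$) bijectively onto the upper half‑disc. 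In particular $\Im g>0$ on $\Sigma$.

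Now comes the decisive step. Fix $z\in\C_+\cap B(\frak a,\epsilon^{3})$. On one hand $m(z)$ is, by Mar\v{c}enko--Pastur, the \emph{unique} solution in $\C_-$ of $g(m)=z$; on the other hand $(g|_{\Sigma})^{-1}(z)\in\Sigma\subset\C_-$ also solves it. Hence $m(z)=(g|_{\Sigma})^{-1}(z)=\psi(w(z))$, where $w(z)$ is the cube root of $z-\frak a$ lying in the sector. Letting $z\to\frak a$ in $\C_+$ gives $w(z)\to 0$, so $m(\frak a):=\lim_{z\to\frak a,\ z\in\C_+}m(z)=\psi(0)=\frak c\in\R$, and therefore $\rho(\frak a)=-\pi^{-1}\Im(\frak c)=0$. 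Taking boundary values $z\to x$ in $\C_+$ for real $x\neq\frak a$ near $\frak a$, the same identity gives $m(x)=\psi(w(x))$ with $w(x)=|x-\frak a|^{1/3}e^{i\theta(x)}$, $\theta(x)\in\{-2\pi/3,-\pi/3\}$, so $\sin\theta(x)=-\tfrac{\sqrt3}{2}$ and $\Im m(x)=\psi'(0)|x-\frak a|^{1/3}\sin\theta(x)+O(|x-\frak a|^{2/3})<0$, i.e.\ $\rho(x)>0$. Thus $\rho$ is positive on a punctured neighbourhood of $\frak a$ and vanishes at $\frak a$; as $\supp\rho$ is closed, $\frak a\in\interior(\supp\rho)$. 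Finally, since $\frak c=m(\frak a)\in D$, the point $\frak a$ satisfies the hypotheses of Proposition~\ref{cusp=>inflexion}, which yields $g^{(3)}(\frak c)>0$ and the expansion \eqref{cubic-root} (alternatively \eqref{cubic-root} reads off directly from $m(x)=\psi(w(x))$ and $\psi'(0)=(6/g^{(3)}(\frak c))^{1/3}$).

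The main obstacle is the third paragraph: matching the analytic continuation of the Stieltjes transform near $\frak a$ with the correct inverse branch of $g$ — the branch through $\frak c$ restricted to the lower‑half‑plane sector $\Sigma$. The delicate points are (i) checking that $\Sigma$ really lies in $\C_-$ (this is where positivity of $g^{(3)}(\frak c)$, hence $\psi'(0)>0$, is used) and that $g|_\Sigma$ surjects onto a full half‑disc of $\C_+$ around $\frak a$, and (ii) invoking uniqueness of the $\C_-$‑solution of the fixed‑point equation to force $m(z)=(g|_{\Sigma})^{-1}(z)$, which is what rules out $\frak a$ being a soft edge or an interior point of a gap; once these are in place the cube‑root bookkeeping and the passage to real boundary values are routine. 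The other point requiring care is establishing $g^{(3)}(\frak c)>0$ via Cauchy--Schwarz, including the exclusion of a single Dirac mass.
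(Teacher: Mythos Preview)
Your proof is correct and takes a genuinely different route from the paper's. The paper isolates a separate lemma (Lemma~\ref{reverse bij}) stating that whenever $\frak c\in D$ with $g'(\frak c)=0$ one has $g(\frak c)\neq 0$ and $m(g(\frak c))=\frak c$; its proof is algebraic, manipulating the fixed-point equation into the identity
\[
(m(z)-\frak c)\Bigl(1-\gamma\int\Phi(\lambda m(z))\Phi(\lambda\frak c)\,\nu(\d\lambda)\Bigr)=(g(\frak c)-z)\,m(z)\,\frak c,
\qquad \Phi(u)=\frac{u}{1-u},
\]
and then showing the bracket is bounded below by $C|m(z)-\frak c|^2$ via $g'(\frak c)=0\Leftrightarrow\gamma\int|\Phi(\lambda\frak c)|^2\,\nu(\d\lambda)=1$. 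From $m(\frak a)=\frak c$ one reads $\rho(\frak a)=0$; the remaining step $\frak a\notin\partial\supp(\rho)$ is handled by quoting \cite[Theorem~5.2]{silverstein-choi-1995}, which says a boundary point with preimage in $D$ must have $g''\neq 0$ there. By contrast, you first secure $g^{(3)}(\frak c)>0$ by Cauchy--Schwarz (the paper deduces it later, inside the proof of Proposition~\ref{cusp=>inflexion}, by a direct algebraic reduction using $g''(\frak c)=0$), then build the local normal form $g(\psi(w))=\frak a+w^3$ and invoke uniqueness of the $\C_-$--root of the Mar\v cenko--Pastur equation to identify $m$ with the sector branch of $g^{-1}$. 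This is more self-contained: it avoids the external Silverstein--Choi reference and delivers $\rho>0$ on a punctured neighbourhood (hence $\frak a\in\interior(\supp\rho)$) and the expansion~\eqref{cubic-root} in one stroke. The trade-off is that Lemma~\ref{reverse bij} is stronger, needing only $g'(\frak c)=0$, so it applies verbatim at soft edges; your argument is tailored to the cusp situation since it uses $g''(\frak c)=0$ to get the cubic normal form.

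One small omission: you should record that $\frak a=g(\frak c)>0$, otherwise the density formula $\rho(\frak a)=-\pi^{-1}\Im m(\frak a)$ and the conclusion $\frak a\in\interior(\supp\rho)\subset(0,\infty)$ are not literally available. This is immediate from $g(\frak c)+\frak c\,g'(\frak c)=\gamma\int\lambda(1-\lambda\frak c)^{-2}\,\nu(\d\lambda)>0$ together with $g'(\frak c)=0$, or alternatively your own branch identity forces it: were $\frak a\le 0$ your formula would give $\Im m(x)<0$ for some $x<0$, contradicting that $m$ is real on $(-\infty,0)$.
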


 We prove Propositions \ref{cusp=>inflexion} and \ref{inflexion=>cusp} in sections \ref{proof:cusp=>inflexion} and \ref{proof:inflexion=>cusp} respectively.  Their proofs are based  on the fact that there is a strong relation between the property that $\frak a=g(\frak c)$ is a cusp point and the local behaviour of $g$ near $\frak c$. For an illustration of these propositions, we refer to Figure \ref{fig:gcusp} where we displayed the graph of the map $g$ associated with the density from Figure~\ref{fig:cusp}.

\begin{figure}[h]
\centering
\includegraphics[width=0.7\linewidth]{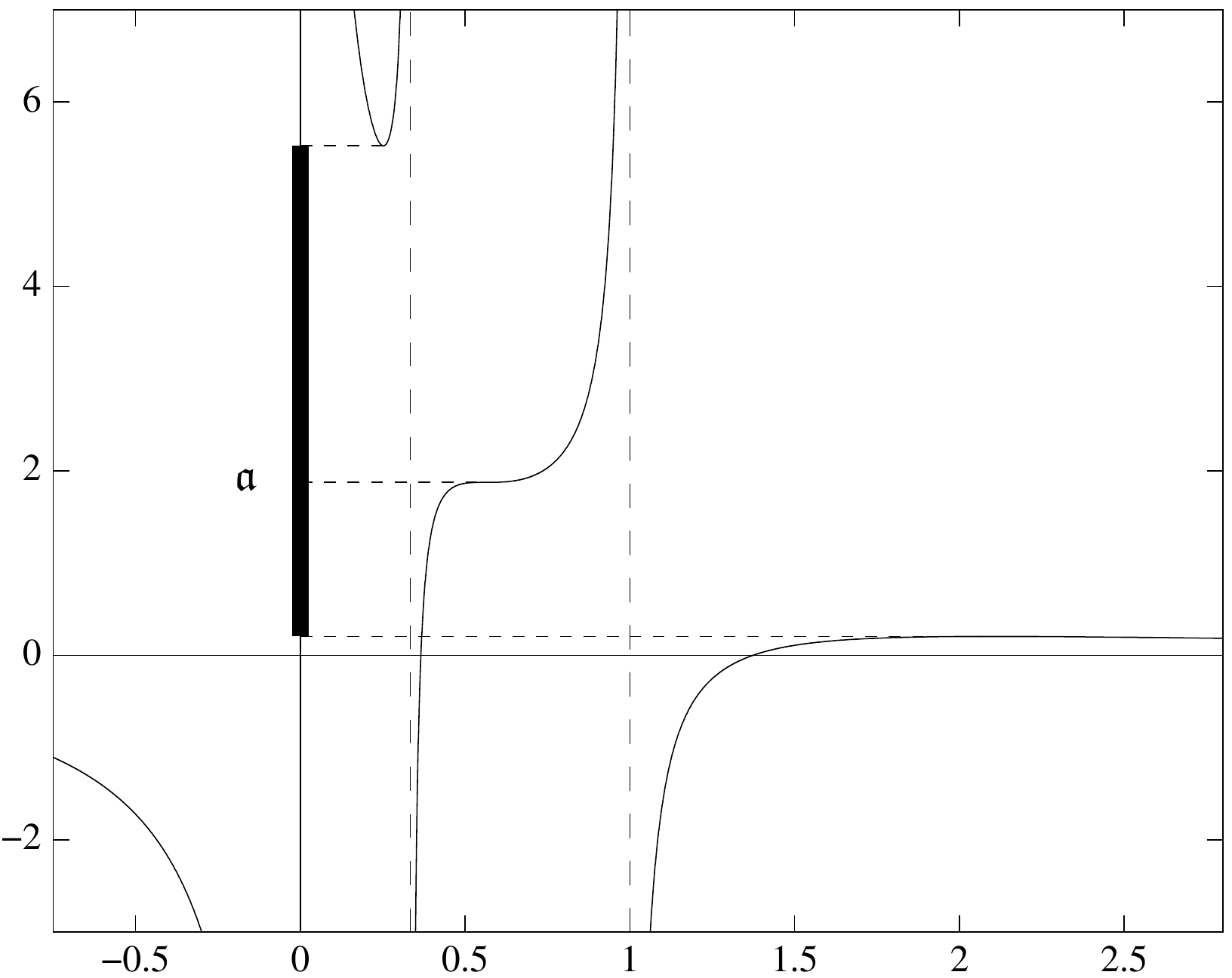}
\caption{Plot of function $x\mapsto g(x)$ on $D$ for $\gamma\simeq 0.336$ and 
$\nu = 0.7\delta_1 + 0.3\delta_3$. The vertical dotted lines are $g$'s asymptotes at $x=1/3$ and $x=1$. The thick segment on the vertical axis 
represents $\supp(\rho)$ who is delimited by the local extrema of g (see Eq. \eqref{outside support}). The point $\frak a$ is a cusp point. 
}
\label{fig:gcusp}
\end{figure}

We now turn to the hard edge setting. 

\subsection{Density behaviour near the hard edge}

As usual in random matrix theory, the \textbf{hard edge} refers here to the origin when it belong to the limiting spectrum. In general, the limiting eigenvalue distribution may not display a hard edge,  like in Figure \ref{fig:cusp}. In fact, this is always  the case when $\gamma\neq1$, see \cite[Proposition 2.4 (a),(c)]{HHN-preprint}. Our next result states that there is a hard edge, namely $0\in\supp(\rho)$, if and only if $\gamma=1$, and that in this case $\rho(x)$ blows up like an inverse square root at the origin. We furthermore relate the presence of a hard edge to the behaviour of $g$ near $\infty$. More precisely, since $\supp(\nu)\subset (0,\infty)$ by Assumption~\ref{ass:nu}, one can see from the definition \eqref{g(m)} of $g$  that the map $ g(1/z)$ is holomorphic at the origin. Thus, we have the analytic expansion as $z\to0$
\eq
\label{exp near infty}
g(1/z)= g(\infty) + g'(\infty) z + \frac{ g''(\infty)}{2} z^{2} + \cdots
\qe
Clearly, $g(\infty)=0$ and the coefficients $g'(\infty)$ and $g''(\infty)$ are respectively given by the first and second  derivative of the map $z\mapsto g(1/z)$ evaluated at $z=0$.



\begin{proposition}
\label{prop:hard-edge-density}
The  following three assertions are equivalent: 
\begin{itemize}
\item[\emph{i)}] $0\in \supp(\rho)$
\item[\emph{ii)}] $\gamma = 1$
\item[\emph{iii)}] $g'(\infty)=0$
\end{itemize}
Moreover, 
 we have $g''(\infty)=-2\gamma\int \lambda^{-1}\nu(\d \lambda) <0$ and, if  one of these assertions is satisfied, then 
\eq
\label{inv square root behav}
\rho(x) = \frac 1\pi \left( \frac 2{-g''(\infty)}\right)^{-1/2} x^{-1/2} \, (1+o(1))\ ,\qquad x\rightarrow 0_+\, .
\qe
\end{proposition}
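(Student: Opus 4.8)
\emph{Proof plan.} The plan is to read off everything from the expansion \eqref{exp near infty} of $g$ near $m=\infty$, supplemented by a local inversion there. From \eqref{g(m)}, $g(1/z)=z+\gamma\int\lambda z\,(z-\lambda)^{-1}\nu(\d\lambda)$; since $\supp(\nu)$ is a compact subset of $(0,\infty)$ by Assumption~\ref{ass:nu}(a), for $|z|<\inf\supp(\nu)$ one expands $\lambda(z-\lambda)^{-1}=-\sum_{k\ge0}(z/\lambda)^k$ and integrates term by term to obtain
$$
g(1/z)=(1-\gamma)\,z-\gamma\Big(\int\lambda^{-1}\nu(\d\lambda)\Big)z^2+\bigO{z^3},\qquad z\to0 .
$$
Hence $g(\infty)=0$, $g'(\infty)=1-\gamma$, and $g''(\infty)=-2\gamma\int\lambda^{-1}\nu(\d\lambda)$, the integral being finite and positive because $\supp(\nu)\subset(0,\infty)$ is compact, so $g''(\infty)<0$. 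In particular $g'(\infty)=0\iff\gamma=1$, i.e. \emph{ii)}$\iff$\emph{iii)}; for the implication $\gamma\neq1\Rightarrow0\notin\supp(\rho)$ I would quote \cite[Proposition~2.4(a),(c)]{HHN-preprint}, the converse being produced below.

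From now on assume $\gamma=1$ and set $\kappa:=\int\lambda^{-1}\nu(\d\lambda)=-g''(\infty)/2>0$. For \emph{ii)}$\Rightarrow$\emph{i)}: if $m\in D$ is real with $g(m)=0$, then multiplying by $m$ gives $\int\frac{m\lambda}{1-m\lambda}\,\nu(\d\lambda)=-1$, so by the Cauchy--Schwarz inequality $\int\bigl(\frac{m\lambda}{1-m\lambda}\bigr)^2\nu(\d\lambda)\ge1$, with equality impossible; therefore $m^2g'(m)=\int\bigl(\frac{m\lambda}{1-m\lambda}\bigr)^2\nu(\d\lambda)-1>0$, so no point of $D$ realises $g(m)=0$ with $g'(m)<0$, and \eqref{outside support} then gives $0\in\supp(\rho)$. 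The same computation shows $g'(m)<0$ for every $m<0$ (there $\bigl(\frac{m\lambda}{1-m\lambda}\bigr)^2<1$), so $g$ is a decreasing bijection from $(-\infty,0)$ onto $(-\infty,0)$ and $m(x)\to-\infty$ as $x\to0_-$.

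For the rate, write $h(z):=g(1/z)=-\kappa z^2+\bigO{z^3}$, which has a zero of order exactly two at $0$; factor $h(z)=-\kappa\,\phi(z)^2$ with $\phi$ holomorphic near $0$, $\phi(0)=0$, $\phi'(0)=1$ (hence biholomorphic near $0$). For $x>0$ small the equation $h(z)=x$ has then exactly the two solutions $z_\pm(x)=\phi^{-1}(\pm i\sqrt{x/\kappa})=\pm i\sqrt{x/\kappa}\,(1+o(1))$, one in $\C_+$ and one in $\C_-$. By the Silverstein--Choi theory the boundary value $m(x)=\lim_{\C_+\ni\zeta\to x}m(\zeta)$ exists, satisfies $g(m(x))=x$, and $m(\C_+)\subset\C_-$ forces $1/m(x)\in\overline{\C_+}$. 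Granting that $m(x)\to\infty$ as $x\to0_+$, the point $1/m(x)$ is then, for small $x>0$, a solution of $h(z)=x$ near $0$ lying in $\overline{\C_+}$, hence equals $z_+(x)$, so $m(x)=1/z_+(x)=-i\sqrt{\kappa/x}\,(1+o(1))$; by \eqref{rho} this gives $\rho(x)=-\tfrac1\pi\Im(m(x))=\tfrac1\pi\sqrt\kappa\,x^{-1/2}(1+o(1))$, which is \eqref{inv square root behav} since $\sqrt\kappa=\bigl(2/(-g''(\infty))\bigr)^{-1/2}$.

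The genuinely delicate point — the main obstacle — is to justify $m(x)\to\infty$ as $x\to0_+$, i.e. that the boundary value is governed, for small $x>0$, by the branch of $g^{-1}$ at $m=\infty$ rather than by one issued from a finite point of $D$. I would argue by contradiction: a finite accumulation point $m_*$ of $m(x)$ as $x\to0_+$ satisfies $g(m_*)=0$, hence (for $\gamma=1$) $\int(m_*\lambda-1)^{-1}\nu(\d\lambda)=0$; this is impossible for $\Im m_*<0$, since then $(m_*\lambda-1)^{-1}$ has strictly positive imaginary part for every $\lambda\in\supp(\nu)\subset(0,\infty)$, and it is impossible for real $m_*$ because $m_*\in D$ with $g'(m_*)>0$ (as shown above), so by the implicit function theorem a real-analytic, real-valued branch of $g^{-1}$ governs $m(x)$ near $0_+$, which would force $\rho\equiv0$ on a right neighbourhood of $0$ and contradict $0\in\supp(\rho)$. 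Thus $m(x)\to\infty$, and apart from this branch-selection argument the proof is a routine computation.
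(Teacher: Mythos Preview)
Your overall architecture matches the paper's: expand $g(1/z)$ at $z=0$ to read off $g'(\infty)=1-\gamma$ and $g''(\infty)=-2\gamma\int\lambda^{-1}\nu(\d\lambda)$, quote \cite[Proposition~2.4]{HHN-preprint} for $i)\Rightarrow ii)$, and obtain the rate by a local quadratic factorisation $g(1/z)=\varphi(z)^2$ together with the branch selection $1/m(x)\in\overline{\C_+}$. Two points deserve comment.

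\emph{The implication $ii)\Rightarrow i)$.} Your Cauchy--Schwarz argument is correct and pleasant: from $g(m)=0$ with $\gamma=1$ one gets $\int\frac{m\lambda}{1-m\lambda}\,\nu(\d\lambda)=-1$, hence $\int\bigl(\frac{m\lambda}{1-m\lambda}\bigr)^2\nu(\d\lambda)>1$ (equality would force the integrand to be the constant $-1$, which is impossible), so $m^2g'(m)>0$ and \eqref{outside support} yields $0\in\supp(\rho)$. The paper proceeds differently, by contradiction: if $0<a_0<\min\supp(\rho)$ then $c_0=m(a_0)<0$, while $g<0$ on $(-\infty,0)$ when $\gamma=1$ (from \cite[Proposition~2.4(b)]{HHN-preprint}), so $a_0=g(c_0)<0$, absurd. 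Your route is more self-contained; the paper's is shorter given the cited fact.

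\emph{The step $|m(x)|\to\infty$ as $x\to0_+$.} Here your contradiction sketch has two genuine holes. First, a real finite accumulation point $m_*$ of $m(x)$ need not lie in $D$: you do not treat the case $1/m_*\in\supp(\nu)$, and then ``$g(m_*)=0$'' is not even defined (nor does $g$ extend continuously to such a point in general). Second, even when $m_*\in D$ and $g'(m_*)>0$, the implicit function theorem only forces $m(x_k)$ to be real along the subsequence on which $m(x_k)\to m_*$; it does not give $\rho\equiv0$ on a full right neighbourhood of $0$, which is what you invoke to contradict $0\in\supp(\rho)$. (This second case can in fact be repaired: if $x_k\in\supp(\rho)$ with $\rho(x_k)=0$ and $m(x_k)\in D$, then Lemma~\ref{g' vanishes} gives $g'(m(x_k))=0$, contradicting $g'(m_*)>0$; if $x_k\notin\supp(\rho)$ then $m'(x_k)<0$ and $g'(m(x_k))m'(x_k)=1$ give $g'(m(x_k))<0$, again contradicting $g'(m_*)>0$. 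But the first hole, $m_*\notin D$, remains.)

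The paper avoids this casework entirely via Lemma~\ref{lemma hard edge}: it first proves $\rho(x)>0$ on some $(0,\eta)$ by an isolated-zeros argument for the derivative of $z\mapsto g(1/z)$, and then, on that interval, takes the real part of the fixed-point equation \eqref{cauchy eq} to obtain the identity
\[
x=\int\frac{\lambda}{|1-\lambda m(x)|^2}\,\nu(\d\lambda),
\]
which forces $|m(x)|\to\infty$ as $x\to0_+$ directly. This is both shorter and robust to the structure of $\supp(\nu)$; it is the key device you are missing.
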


Proposition \ref{prop:hard-edge-density} is proven in Section \ref{proof:hard-edge-density}.

\begin{remark}\label{analogy-soft} There is an analogous statement for any left edge $\frak a>0$ of the spectrum satisfying $\frak c=m(\frak a)\in D$ which follows from \cite{silverstein-choi-1995}; see also \cite[Section 2]{HHN-esaim-preprint} for further information.
 Indeed, in this case we have 
$$
g(\frak c) = \frak a,\qquad  g'(\frak c)=0, \qquad g''(\frak c)<0,
$$ 
and furthermore,  as $x\rightarrow \frak a_+$,
$$
\rho(x) = \frac 1\pi \left( \frac 2{-g''(\frak c)}\right)^{1/2} \big(x-\frak a\big)^{1/2}(1+o(1))\ .
$$
By analogy with this equation, the preimage $\frak c\in D$ corresponding to the hard edge is $\frak c=\infty$. The fact that it actually belongs to $D$ follows from  Assumption \ref{ass:nu}(a).
\end{remark}

\begin{remark} 
\label{exact assumption density}
As we shall see in Section \ref{density proofs}, proofs of Propositions  \ref{cusp=>inflexion}, \ref{inflexion=>cusp}  and \ref{prop:hard-edge-density} only rely on the properties of the limiting eigenvalue distribution $\mu$, which do not depend on whether the entries of $\bv X_N$ are Gaussian or not. More precisely, the exact assumptions required for these propositions are that the entries of $\bv X_N$ are  i.i.d centered random variables with variance one,  Assumption \ref{ass:nu}--(b), and that  $\mu(\{0\})=0$ (which follows from Assumption~\ref{ass:nu}--(a)), see \cite{mar-and-pas-67,silverstein-choi-1995}.
\end{remark}


\subsection{The Pearcey kernel and fluctuations near a cusp point}

Our next result essentially states that the random  eigenvalues of $\bv M_N$, properly scaled near a regular cusp point, asymptotically behave  like the determinantal point process associated with the Pearcey kernel, provided that an extra condition on a speed parameter is satisfied.  

In order to state this result, we first introduce this limiting point process. Next, we define what we mean by regular, and provide the existence of appropriate scaling parameters. After that, we  finally state our result for the fluctuations near a cusp point. 

\subsubsection{Determinantal point processes}
\label{sec:DPP}

A \textbf{point process} on $\R$ (or in a subset therein), namely a probability distribution $\p$ over the locally finite discrete subsets $(y_i)$ of $\R$, is \textbf{determinantal} if there exists an appropriate kernel $\K(x,y):\R\times\R\to\R$ which characterizes the correlation functions in the following way: For every $k\geq 1$ and every compactly supported Borel function $\Phi:\R^k\to\R$,  we have
\begin{align}
\label{cor function}
\mathbb E\left[   \sum_{y_{i_1}\neq \, \cdots \,\neq \, y_{i_k}}  \Phi(y_{i_1},\ldots,y_{i_k})\right]
=\int_\R \cdots\int_{\R} \Phi(y_1,\ldots,y_k)\det\Big[\K(y_i,y_j)\Big]_{i,j=1}^k\d y_1\cdots \d y_k\, .
\end{align}
In particular, the gap probabilities can be expressed as Fredholm determinants. Namely, given any interval $J\subset \R$, the probability that the point process avoids  $J$ reads
\eq
\label{gap proba}
\p\Big( (y_i)\cap J =\emptyset\Big)= 1+\sum_{k=1}^\infty\frac{(-1)^k}{k!}\int_J \cdots\int_{J}\det\Big[\K(y_i,y_j)\Big]_{i,j=1}^k\d y_1\cdots \d y_k\, ,
\qe
and the right hand side is the Fredholm determinant $\det(I-\K)_{L^2(J)}$ of the integral operator acting on $L^2(J)$ with kernel $\K(x,y)$, provided that it makes sense.  For instance, if one assumes that $J$ is a compact interval, which is enough for the purpose of this work, then $\det(I-\K)_{L^2(J)}$ is well-defined and finite as soon as ${\|\K\|}_J=\sup_{x,y\in J}|\K(x,y)|<\infty$. Moreover, the map $\K(x,y)\mapsto \det(I-\K)_{L^2(J)}$ is Lipschitz with respect to  ${\|\cdot\|}_J$ when restricted to the kernels satisfying  ${\|\K\|}_J<\infty$ (see e.g. \cite[Lemma 3.4.5]{AGZ}). We refer the reader to \cite{Hu,JoR,AGZ} for further information on determinantal point processes. 

\subsubsection{The Pearcey kernel}

Given any $\tau\in\R$, consider the Pearcey-like integral functions
\[
\phi(x)=\frac{1}{2i\pi}\int_{\Sigma} e^{xz-\tau \frac{z^2}2+\frac{z^4}4} \d z,\qquad \psi(y)=\frac{1}{2i\pi}\int_{-i\infty}^{i\infty} e^{-yw+\tau \frac {w^2}{2}-\frac{w^4}4} \d w,
\]
where the contour $\Sigma$ has two non-intersecting components, one which goes from $e^{i\pi/4}\infty$ to $e^{-i\pi/4}\infty$, whereas the other one goes from $e^{-3i\pi/4}\infty$ to $e^{3i\pi/4}\infty$. More precisely, we parametrize here this contour by 
 \eq
\label{Sigma}
\Sigma=\left\{\pm e^{i\theta}:\; \theta\in[-\pi/4,\pi/4]  \right\}\,\cup\,\left\{te^{\pm i\pi/4}:\;  t\in(-\infty,-1]\cup[1,+\infty)\right\}\, ,
\qe 
with the orientation as  shown in Figure \ref{fig:pkernel}.

\begin{figure}[h]
\centering
\includegraphics[width=0.4\linewidth]{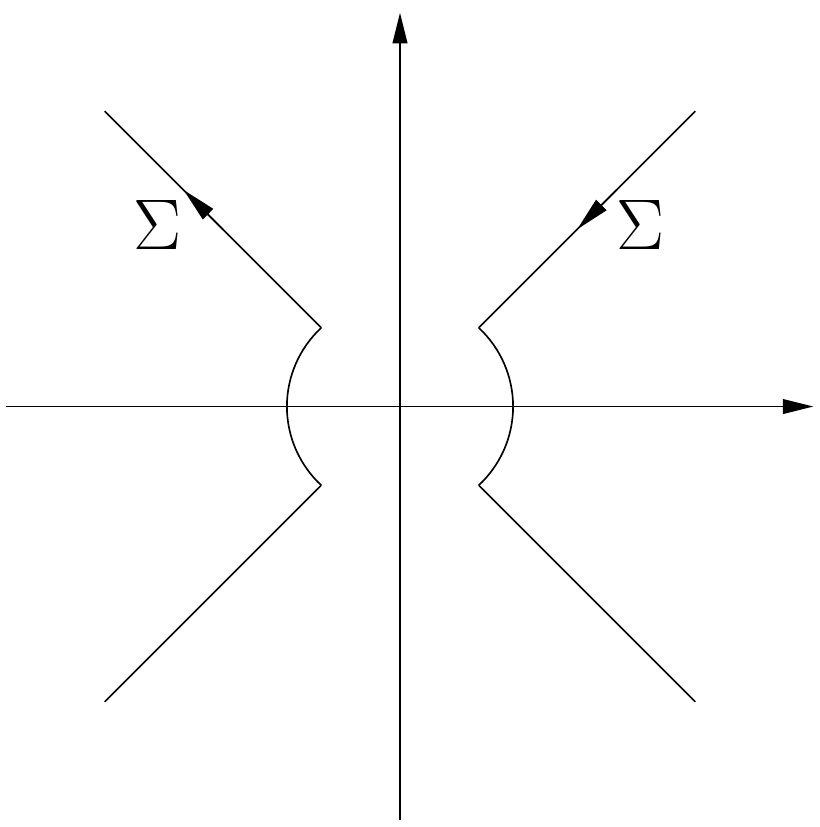}
\caption{The contour $\Sigma$.}
\label{fig:pkernel}
\end{figure}
It follows from their definitions that the functions $\phi$ and $\psi$ satisfy the respective differential equations 
\[
\phi'''(x)-\tau\phi'(x)+x\phi(x)=0\, , \qquad \psi'''(y)-\tau\psi'(y)-y\psi(y)=0\, .
\]
The \textbf{Pearcey kernel} is then defined for $x,y\in\R$ by
\eq
\label{KPe}
\K_\Pe^{(\tau)}(x,y)=\frac{\phi''(x)\psi(y)-\phi'(x)\psi'(y)+\phi(x)\psi''(y) -\tau\phi(x)\psi(y)}{x-y}\ ,
\qe
see for instance \cite{brezin-hikami-98-2}.
One can alternatively represent this kernel as a double contour integral
\eq
\label{Pearcey cont}
\Kpe(x,y)=
\frac{1}{(2i\pi)^2}\int_{\Sigma}\d z\int_{-i\infty}^{\, i\infty} \d w \,\frac{1}{w-z} e^{xz- \frac{\tau z^2}2+\frac{z^4}4-yw+ \frac{\tau w^2}2-\frac{w^4}4}\ ,
\qe
from which one can easily see the symmetry $\Kpe(x,y)=\Kpe(-x,-y)$ by performing the changes of variables $z\mapsto -z$ and $w\mapsto -w$.

The Pearcey kernel first appeared in the works of Br\'ezin and Hikami \cite{brezin-hikami-98,brezin-hikami-98-2} when studying the eigenvalues of a specific additive perturbation of a GUE random matrix near a cusp point. Subsequent generalizations have been considered by Tracy and Widom \cite{tracy-widom-pearcey}, and a Riemann-Hilbert analysis has been performed by Bleher and Kuijlaars  \cite{BK07} as well. This kernel also arises in the description of random combinatorial models, such as certain plane partitions \cite{OR07}. Furthermore, it has been established that the gap probabilities for the associated point process satisfy differential equations. For instance, $\log \det(I- \K^{(\tau)}_\Pe)_{L^2(s,t)}$ satisfies PDEs with respect to the variables $s$, $t$ and $\tau$, see \cite{tracy-widom-pearcey, BC12, ACvM12}, which should be compared to the connection between the Tracy-Widom distribution and the Painlev\'e II equation.

\subsubsection{The  regularity condition}

 We start with the following definition. 
 
\begin{definition}\label{def:regularity} A cusp point $\frak a$  is \textbf{regular} if $\frak c=m(\frak a)$ satisfies
\eq
\label{RC}
\liminf_{N\to\infty}\min_{j=1}^n\left|\frak c-\frac{1}{\lambda_j}\right|>0.
\qe
\end{definition} 

The regularity condition \eqref{RC} has  been considered in \cite{HHN-preprint} when dealing with soft edges, to ensure the appearance of the Tracy-Widom distribution. Similarly here, as we soon shall see, this condition enables the Pearcey kernel to arise at a cusp point. Moreover, the behaviour of $\rho(x)$ at such cusp points is well described by Proposition \ref{cusp=>inflexion}, as explained in the next remark. 

\begin{remark}
\label{RC -> cubic}
If $\frak a$ is a regular cusp point, then it follows from the weak convergence $\nu_N\to\nu$ and the definition of $D$ that necessarily $\frak c=m(\frak a)\in D$. Thus, $\frak a$  satisfies the hypothesis of Proposition \ref{cusp=>inflexion}. In particular,  we have $\frak a=g(\frak c)$ and $\rho(x)$ behaves like a cube root near $\frak a$.
\end{remark}

Finally, the regularity assumption yields the existence of natural scaling parameters for the eigenvalue local asymptotics. Consider the counterpart of the map $g$ introduced in \eqref{g(m)} after replacing $\nu$ by $\nu_N$, see \eqref{nuN}, and $\gamma$ by $n/N$, namely
\eq
\label{gN}
g_N(z)=\frac 1{z}+\frac nN\int\frac{\lambda}{1-z\lambda}\, \nu_N(\d \lambda) \, .
\qe
The map $g_N$ is the inverse Cauchy transform of a probability measure $\mu_N$ usually referred to as \textbf{deterministic equivalent} for the random eigenvalues distribution of $\bv M_N$, see~\cite[Section 3.2]{HHN-esaim-preprint}. According to Section \ref{sec:fixed-point}, $\mu_N$ has a decomposition of the form \eqref{mu decomposition}. In particular, it has a density $\rho_N$ on $(0,+\infty)$ which is analytic wherever it is positive.

The next proposition provides an appropriate  sequence of finite--$N$ approximations of $\frak c$, which  we will use in the definition of the scaling parameters.
\begin{proposition}
\label{prop:sequence-cN} Let $\frak a=g(\frak c)$ be a regular cusp point. Then there exists a sequence $(\frak c_N)$ of real numbers, unique up to a finite number of terms, converging to $\frak c$ and such that for every $N$ large enough, we have $\frak c_N\in D$ and 
$$
\lim_{N\to \infty} g_N(\frak c_N) = g(\frak c)\ ,\qquad \lim_{N\to \infty} g_N'(\frak c_N) = 0\ ,\qquad g_N''(\frak c_N) =0 \qquad \textrm{and} 
\qquad g_N^{(3)}(\frak c_N)>0\ . 
$$ 
\end{proposition}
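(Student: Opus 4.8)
The plan is to push the local properties of $g$ at $\frak c$ obtained in Proposition~\ref{cusp=>inflexion} down to the finite--$N$ maps $g_N$ via a uniform convergence argument, and then to pin down $\frak c_N$ as the eventually unique zero of $g_N''$ in a small, $N$--independent real neighbourhood of $\frak c$. First I would exploit the regularity condition \eqref{RC}: it yields $\epsilon>0$ and $N_0$ with $|\frak c-1/\lambda_j|\geq 2\epsilon$ for all $j$ and all $N\geq N_0$. Since $\frak c\in D$ by Remark~\ref{RC -> cubic}, I may fix a closed disc $\mathcal D$ centred at $\frak c$ of radius at most $\epsilon$ that avoids $0$ and whose image under $z\mapsto 1/z$ is disjoint from $\supp(\nu)$; then the denominators $1-z\lambda_j$ stay bounded away from $0$ for $z\in\mathcal D$ and $N\geq N_0$, so from \eqref{gN} each $g_N$ is holomorphic on $\mathcal D$ (and real on $\mathcal D\cap\R$) for $N\geq N_0$. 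Because all $\nu_N$ and $\nu$ are supported in a common compact subset of $(0,\infty)$ by Assumption~\ref{ass:nu}(a), on which $\lambda\mapsto\lambda/(1-z\lambda)$ is bounded and continuous uniformly in $z\in\mathcal D$, the weak convergence \eqref{weak conv nuN} together with $n/N\to\gamma$ gives $g_N(z)\to g(z)$ pointwise on $\mathcal D$; uniform boundedness of $(g_N)_{N\geq N_0}$ on $\mathcal D$ then upgrades this to locally uniform convergence, and Cauchy's estimates give $g_N^{(k)}\to g^{(k)}$ locally uniformly on $\mathcal D$ for every $k\geq 0$.

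Next, since $g^{(3)}(\frak c)>0$ (Proposition~\ref{cusp=>inflexion}) and $g^{(3)}$ is continuous, I would choose $\delta>0$ small enough that the real interval $I=[\frak c-\delta,\frak c+\delta]$ lies in $\mathcal D$ and $g^{(3)}\geq\kappa>0$ on $I$; by the uniform convergence of $g_N^{(3)}$ on $I$, this forces $g_N^{(3)}>0$ on $I$ for all $N$ large, so each such $g_N''$ is real--analytic and strictly increasing on $I$. As $g''$ is also strictly increasing on $I$ with $g''(\frak c)=0$, we get $g''(\frak c-\delta)<0<g''(\frak c+\delta)$, hence, by uniform convergence, $g_N''(\frak c-\delta)<0<g_N''(\frak c+\delta)$ for $N$ large; the intermediate value theorem together with strict monotonicity produces a \emph{unique} $\frak c_N\in I$ with $g_N''(\frak c_N)=0$. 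Running this construction with $\delta$ replaced by an arbitrarily small $\delta'\in(0,\delta)$, and using that the zero is unique in the whole interval $I$, shows both that $\frak c_N\to\frak c$ and that any other sequence meeting the stated requirements must eventually land in $I$ and hence coincide with $(\frak c_N)$ for $N$ large, which is the claimed uniqueness up to finitely many terms.

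It then remains to check the listed properties. For $N$ large $\frak c_N\in D$: it is nonzero (being close to $\frak c\neq0$) and, since $\supp(\nu)$ is closed and $1/\frak c\notin\supp(\nu)$, continuity gives $1/\frak c_N\notin\supp(\nu)$; moreover $\frak c_N$ avoids the poles $1/\lambda_j$ (it lies in $\mathcal D$), so $g_N(\frak c_N)$ is well defined. Passing to the limit using the locally uniform convergences $g_N\to g$, $g_N'\to g'$ on $\mathcal D$ and $\frak c_N\to\frak c$ yields $g_N(\frak c_N)\to g(\frak c)$ and $g_N'(\frak c_N)\to g'(\frak c)=0$; by construction $g_N''(\frak c_N)=0$, and $g_N^{(3)}(\frak c_N)>0$ since $\frak c_N\in I$. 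The main obstacle is the very first step, namely obtaining the convergence of $g_N$ \emph{and all its derivatives} uniformly on a neighbourhood of $\frak c$ that does not depend on $N$; this is precisely where the regularity condition \eqref{RC} is essential, since without it the poles $1/\lambda_j$ of $g_N$ could accumulate at $\frak c$ and destroy any such uniform control. The subsequent monotonicity and continuity arguments are then routine.
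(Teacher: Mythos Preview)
Your proposal is correct and follows essentially the same route as the paper's sketch: the regularity condition \eqref{RC} provides a fixed neighbourhood of $\frak c$ on which the $g_N$ are holomorphic and locally bounded, a normal--family argument (the paper names Montel's theorem, you use pointwise convergence plus uniform boundedness, i.e.\ Vitali) upgrades the weak convergence $\nu_N\to\nu$ to locally uniform convergence of all derivatives, and one then locates the unique nearby zero of $g_N''$ using that $\frak c$ is a simple zero of $g''$. The only cosmetic difference is that where the paper invokes Hurwitz's theorem, you give the equivalent real--variable argument via $g_N^{(3)}>0$ on $I$ and the intermediate value theorem; this is perfectly fine and arguably more transparent here.
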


This proposition is the counterpart of \cite[Proposition 2.7]{HHN-preprint}, with a similar proof. Let us only provide a sketch here: Combined with Montel's theorem, the regularity condition ensures  that $g_N^{(k)}$ converge uniformly to $g^{(k)}$ on a neighbourhood of $\frak c$ for every $k\geq 0$. The proposition  then follows by applying Hurwitz's theorem to $g''_N$ since $\frak c$ is a simple root for $g''$, according to Proposition \ref{cubic-root}. 

Let us emphasize that there is however an important difference with regular soft edges as described in \cite{HHN-preprint}, where it was shown that if $\frak a=g(\frak c)$ is a regular soft edge for $\mu$, then $g_N(\frak c_N)$ is a soft edge for $\mu_N$.

\begin{remark} A regular cusp point may not be the limit of finite--$N$ cusp points.
\label{rem:N-cusp}
More precisely, if $\frak a=g(\frak c)$ is a regular cusp point, then in particular $g'(\frak c)=g''(\frak c)=0$. However, this only ensures the existence of a sequence $(\frak c_N)$ such that 
$g_N''(\frak c_N)=0$. A priori, $g_N'(\frak c_N)$ converges to zero as $N\to \infty$ but might not be equal to zero.  In fact, it is not hard to show we have the following alternatives:
\begin{itemize}
\item if $g_N'(\frak c_N)=0$, then $g_N(\frak c_N)$ is a cusp point for $\mu_N$;
\item if $g'_N(\frak c_N)>0$, then the density $\rho_N$ is positive in a vicinity of $g_N(\frak c_N)$; 
\item if $g'_N(\frak c_N)<0$, then $g_N(\frak c_N)$ does not belong to the support of $\mu_N$.
\end{itemize}
\end{remark}

We are finally in position to state our result concerning the eigenvalue behaviour at a regular cusp point.


\subsubsection{Fluctuations around a cusp point}

Thanks to Assumption \ref{ass:gauss}, the random eigenvalues $(x_i)$ of $\bv M_N$ form a determinantal point process with respect to a kernel $\K_N(x,y)$, see \cite{BBP-2005,On}. An explicit formula for this kernel is provided in Section \ref{sec:Pearcey}. The main result of this section is the local uniform convergence of this kernel, properly scaled, towards the Pearcey kernel.

\begin{theorem}\label{th:main}
Let $\frak a=g(\frak c)$ be a regular cusp point. Let $(\frak c_N)$ be the sequence associated to $\frak c$ coming from Proposition~\ref{prop:sequence-cN}. Assume moreover that the following decay assumption holds true: There exists $\kappa\in \R$ such that
\begin{equation}\label{ass:speed}
\sqrt{N} g'_N(\frak c_N)\xrightarrow[N\to\infty]{} \kappa \ .
\end{equation}
Set
\begin{equation}\label{def:constants}
\frak a_N = g_N(\frak c_N)\ ,\qquad \sigma_N =\left( \frac{6}{g^{(3)}_N(\frak c_N)}\right)^{1/4}\ ,\qquad \tau = -\kappa \left( \frac 6{g^{(3)}(\frak c)} \right)^{1/2},
\end{equation}
so that $\frak a_N\to \frak a$ and $\frak \sigma_N \to (6/g^{(3)}(\frak c))^{1/4}>0$ as $N\to \infty$ by Proposition \ref{prop:sequence-cN}. Then, we have

\eq
\label{kernel conv}
\frac{1}{ N^{3/4} \sigma_N}\, \K_N\left(\frak a_N + \frac{x}{ N^{3/4} \sigma_N}, \frak a_N+\frac{y}{ N^{3/4} \sigma_N}\right)\xrightarrow[N\to\infty]{}\K_\Pe^{(\tau)}(x,y)
\qe
uniformly for $x,y$ in compact subsets of $\R$.
\end{theorem}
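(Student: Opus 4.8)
The plan is to start from the explicit double-contour-integral representation of the correlation kernel $\K_N(x,y)$ for complex correlated Wishart matrices (the one recalled in Section \ref{sec:Pearcey}, of the type appearing in \cite{BBP-2005,On}), and perform a saddle-point analysis of the rescaled kernel \eqref{kernel conv}. The kernel has the schematic form
\[
\K_N(x,y)=\frac{1}{(2i\pi)^2}\oint_{\Gamma}\d z\oint_{\Theta}\d w\,\frac{1}{\,w-z\,}\,\frac{e^{N F_N(w)}}{e^{N F_N(z)}}\times(\text{rational prefactor involving }x,y),
\]
where $F_N$ is built out of $\log$-terms $\frac1N\sum_j\log(\,\cdot\,-1/\lambda_j)$, a linear term, and a $\log$ term coming from the spectral parameter; crucially, the critical-point equation $F_N'(z)=0$ after the change of variables $z=1/\zeta$ becomes exactly $g_N(\zeta)=\frak a_N$, while $F_N''=0$ corresponds to $g_N'(\zeta)=0$ and $F_N'''=0$ to $g_N''(\zeta)=0$. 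By construction in Proposition \ref{prop:sequence-cN}, $\frak c_N$ is a point where $g_N'(\frak c_N)\to0$ and $g_N''(\frak c_N)=0$ with $g_N^{(3)}(\frak c_N)>0$, so the preimage $1/\frak c_N$ is a genuine order-$3$ degenerate (cubic) saddle of $F_N$ — this is precisely the configuration that produces a Pearcey kernel rather than an Airy one.

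Next I would localize both contours near this cubic saddle. Writing $z=1/(\frak c_N+N^{-1/4}\sigma_N^{-1}u)$ and similarly for $w$, Taylor-expanding $N F_N$ around the saddle, the cubic vanishes, the quadratic term survives only through the small mismatch $g_N'(\frak c_N)=O(N^{-1/2})$, whose precise size is governed by the decay assumption \eqref{ass:speed} $\sqrt N g_N'(\frak c_N)\to\kappa$, and the quartic term gives the leading contribution. With the stated normalizations $\sigma_N=(6/g_N^{(3)}(\frak c_N))^{1/4}$ and $\tau=-\kappa(6/g^{(3)}(\frak c))^{1/2}$, one finds
\[
N F_N(z)= \text{const} + \Big(x u - \tfrac{\tau}{2}u^2 + \tfrac14 u^4\Big) + o(1)
\]
uniformly for $u$ in compact sets, after also absorbing the $x$-dependence from the rescaling $\frak a_N+x/(N^{3/4}\sigma_N)$ into the exponent; an analogous expansion holds for $w$ with the opposite-sign ("$-y w+\frac\tau2 w^2-\frac14 w^4$") exponent. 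The prefactor $1/(w-z)$ scales like $N^{1/4}\sigma_N/(u-v)$, and combined with the overall $1/(N^{3/4}\sigma_N)$ and the Jacobians $\d z,\d w$ (each contributing $N^{-1/4}$) this yields exactly the factor $1/(N^{3/4}\sigma_N)\cdot N^{1/4}\cdot N^{1/4}\cdot N^{1/4}=1$ in front, matching the claimed limit. The deformed contours, in the rescaled variable, converge to the Pearcey contours: the $z$-contour $\Gamma$ opens up into the two-component contour $\Sigma$ of \eqref{Sigma} (the directions $e^{\pm i\pi/4}$, $e^{\pm 3i\pi/4}$ are exactly the steepest-descent rays of $u^4/4$), while the $w$-contour becomes the imaginary axis. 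Comparing with \eqref{Pearcey cont} identifies the limit as $\Kpe(x,y)$.

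The two technical points that need real care are, first, the global contour deformation and tail estimates: one must check that $\Gamma$ and $\Theta$ can be deformed to pass through $1/\frak c_N$ along admissible steepest-descent directions without crossing poles of the integrand (the poles at $z=1/\lambda_j$ and $w=1/\lambda_j$ are kept away precisely by the regularity condition \eqref{RC}, which also gives, via Montel/Hurwitz as in Proposition \ref{prop:sequence-cN}, the locally uniform convergence $g_N^{(k)}\to g^{(k)}$ needed to control the error terms), and that the contributions away from the saddle are exponentially negligible — here one uses that $\re F_N$ has a strict max/min at the saddle along the chosen contours, a sign analysis analogous to \cite{BK07,tracy-widom-pearcey,brezin-hikami-98-2}. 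Second, one must justify that the error terms in the Taylor expansion are $o(1)$ \emph{uniformly} for $x,y$ in compacts and for $u,v$ in the (growing, then truncated) rescaled contours; this is routine once the $g_N^{(k)}\to g^{(k)}$ convergence and the $O(N^{-1/2})$ control on $g_N'(\frak c_N)$ are in hand, but it is the bookkeeping-heavy part. I expect the main obstacle to be precisely this uniform steepest-descent control with a nonstationary (but vanishing) quadratic term — i.e. getting the contours and the remainder estimates to behave uniformly in $x,y$ and in $N$ near the cubic saddle — rather than any single algebraic identity; the algebra, given the normalizations in \eqref{def:constants}, is essentially forced.
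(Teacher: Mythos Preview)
Your proposal is correct and follows essentially the same saddle-point strategy as the paper: represent the rescaled kernel as a double contour integral with phase $Nf_N$, identify $\frak c_N$ as a degenerate saddle where $f_N'(\frak c_N)=0$, $f_N^{(3)}(\frak c_N)=0$, $f_N^{(4)}(\frak c_N)\to g^{(3)}(\frak c)>0$ and $\sqrt N f_N''(\frak c_N)\to\kappa$, Taylor-expand locally to recover the Pearcey integrand, and show the residual contours are exponentially negligible. Two minor remarks: in the paper's kernel representation \eqref{KN} the poles already sit at $\lambda_j^{-1}$ and the phase $f_N$ in \eqref{fN} satisfies $f_N'=g_N-\frak a_N$ directly, so the saddle is at $\frak c_N$ itself and no auxiliary change $z=1/\zeta$ is needed; and the global contour step (your ``sign analysis'') is carried out not by steepest-descent curves per se but by a level-set/connected-component analysis of $\re f$ (Lemmas \ref{behaviour infinity}--\ref{key level sets}, borrowed from \cite{HHN-preprint}) using the maximum principle for subharmonic functions to locate the regions $\Omega_\pm=\{\re f\lessgtr\re f(\frak c)\}$ through which the completed contours $\Upsilon^\pm$, $\Xi$ must pass.
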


This result was obtained by Mo \cite{Mo2} in the special case where the matrix $\bv\Sigma_N$ has exactly two distinct eigenvalues (each with multiplicities proportional  $N$), by means of a Riemann-Hilbert asymptotic analysis.

Notice that if $(x_i)$ is determinantal with kernel $\K_N(x,y)$, then $(N^{3/4} \sigma_N (x_i - \frak a_N))$ is determinantal with the kernel given by the left hand side of \eqref{kernel conv}.  Thus, having in mind Section \ref{sec:DPP}, a direct consequence of this theorem is the convergence of the compact gap probabilities.

\begin{corollary} Under the assumptions of Theorem \ref{th:main}, we have for every  $s,t\in\R$,
\eq
\label{eq:hole}
 \mathbb{P}\left( 
\left( N^{3/4} \sigma_N (x_i - \frak a_N)\right) \cap [s,t] =\emptyset 
\right) 
\xrightarrow[N\to\infty]{}\det(I-\K^{(\tau)}_\Pe)_{L^2(s,t)} \ .
\qe
\end{corollary}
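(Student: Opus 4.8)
The plan is to combine Theorem~\ref{th:main} with the general correspondence between determinantal point processes and Fredholm determinants recalled in Section~\ref{sec:DPP}. As noted just before the statement, since $(x_i)$ is determinantal with kernel $\K_N$, the rescaled configuration $(N^{3/4}\sigma_N(x_i-\frak a_N))$ is again determinantal, and its kernel is precisely the left-hand side of \eqref{kernel conv}, namely
\[
\widetilde\K_N(x,y):=\frac{1}{N^{3/4}\sigma_N}\,\K_N\!\left(\frak a_N+\frac{x}{N^{3/4}\sigma_N},\ \frak a_N+\frac{y}{N^{3/4}\sigma_N}\right).
\]
This is just the change of variables $x_i\mapsto N^{3/4}\sigma_N(x_i-\frak a_N)$ applied to the defining relation \eqref{cor function}, up to the harmless conjugation $\widetilde\K_N(x,y)\mapsto(f(x)/f(y))\widetilde\K_N(x,y)$ symmetrizing the Jacobian factor, which affects neither the correlation functions nor the Fredholm determinants.

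Fix $s\le t$ and write $J=[s,t]$. First I would observe that the left-hand side of \eqref{eq:hole} equals $\det(I-\widetilde\K_N)_{L^2(J)}$. Since $\bv M_N$ has only $\min(n,N)$ random eigenvalues, $(x_i)$ is a \emph{finite} point process: its $k$-point correlation functions vanish for $k$ large, so the series \eqref{gap proba} reduces to a finite sum, which is by definition the Fredholm determinant on $L^2(J)$ of the operator with kernel $\widetilde\K_N$. Equivalently, Theorem~\ref{th:main} shows $\widetilde\K_N\to\Kpe$ uniformly on $J\times J$, and $\Kpe$ is continuous --- this is transparent on the double contour integral representation \eqref{Pearcey cont} --- hence bounded on the compact set $J\times J$; thus $\sup_N\|\widetilde\K_N\|_J<\infty$ and, as recalled in Section~\ref{sec:DPP}, $\det(I-\widetilde\K_N)_{L^2(J)}$ is well defined and finite.

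It only remains to pass to the limit in the Fredholm determinant. Theorem~\ref{th:main} gives $\|\widetilde\K_N-\Kpe\|_J=\sup_{x,y\in J}\big|\widetilde\K_N(x,y)-\Kpe(x,y)\big|\to 0$, and since the map $\K\mapsto\det(I-\K)_{L^2(J)}$ is Lipschitz for $\|\cdot\|_J$ on the kernels of finite $\|\cdot\|_J$-norm (see e.g.\ \cite[Lemma 3.4.5]{AGZ}), we obtain $\det(I-\widetilde\K_N)_{L^2(J)}\to\det(I-\Kpe)_{L^2(J)}$, which is exactly \eqref{eq:hole}. There is no genuine obstacle here: the only mildly delicate points are the identification of the finite-$N$ gap probability with the stated Fredholm determinant (handled by finiteness of the point process) and the continuity and boundedness of $\Kpe$ on compacts (needed for the Lipschitz bound to apply and for the limiting determinant to be finite), both of which are routine.
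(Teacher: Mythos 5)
Your proof is correct and follows essentially the same route the paper takes: the paper does not give a separate proof of the corollary but instead relies exactly on the remark preceding it — the rescaled eigenvalues form a determinantal process whose kernel is the left-hand side of \eqref{kernel conv}, the gap probability is the Fredholm determinant as in \eqref{gap proba}, and the Lipschitz continuity of $\K\mapsto\det(I-\K)_{L^2(J)}$ with respect to $\|\cdot\|_J$ (from \cite[Lemma 3.4.5]{AGZ}, recalled in Section \ref{sec:DPP}) combined with the uniform convergence of Theorem \ref{th:main} gives the conclusion. Your additional observations (finiteness of the point process, boundedness of $\Kpe$ on compacts via \eqref{Pearcey cont}) are correct and flesh out details the paper leaves implicit; the aside about a conjugation $f(x)/f(y)$ is unnecessary here, since the scaling $x\mapsto N^{3/4}\sigma_N(x-\frak a_N)$ is affine with constant Jacobian, so the kernel transforms exactly by the constant prefactor $(N^{3/4}\sigma_N)^{-1}$ with no further conjugation needed, but this does not affect the validity of the argument.
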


We now make a few comments on the assumption \eqref{ass:speed}.

\begin{remark} The decay assumption  \eqref{ass:speed} roughly states that the cusp point $\frak a=g(\frak c)$ appears fast enough. More precisely,  for a cusp point $\frak a=g(\frak c)$, one has in particular $g'(\frak c)=0$. 

\begin{itemize}
\item If $\kappa>0$, then $g'_N(\frak c_N)>0$ for large $N$. According to Remark \ref{rem:N-cusp}, the density $\rho_N$ is positive near $g_N(\frak c_N)$ and will converge to zero to asymptotically give birth to a cusp point. The family of densities $\rho_N$ display a sharp non-negative minimum at $g_N(\frak c_N)$ converging to zero which may be thought of as the  erosion of a valley, see the thin curve in Figure \ref{fig:zoomcusp}.
\item If $\kappa<0$, then $g'_N(\frak c_N)<0$ for large $N$ and the density $\rho_N$ vanishes in a vicinity of $g_N(\frak c_N)$. However, this interval will shrink and asymptotically disappear. Thus, two connected
components of the support of $\mu_N$ move towards one another (moving cliffs), see the dotted curve in
Figure \ref{fig:zoomcusp}. 
\end{itemize}
The assumption  \eqref{ass:speed} is an indication on the speed at which the bottom of the valley reaches zero $(\kappa>0)$, or at which the two cliffs approach one another $(\kappa<0)$. See \cite{HHN-esaim-preprint} for a more in-depth discussion.
\end{remark}


\begin{figure}[h]
\centering
\includegraphics[width=0.6\linewidth]{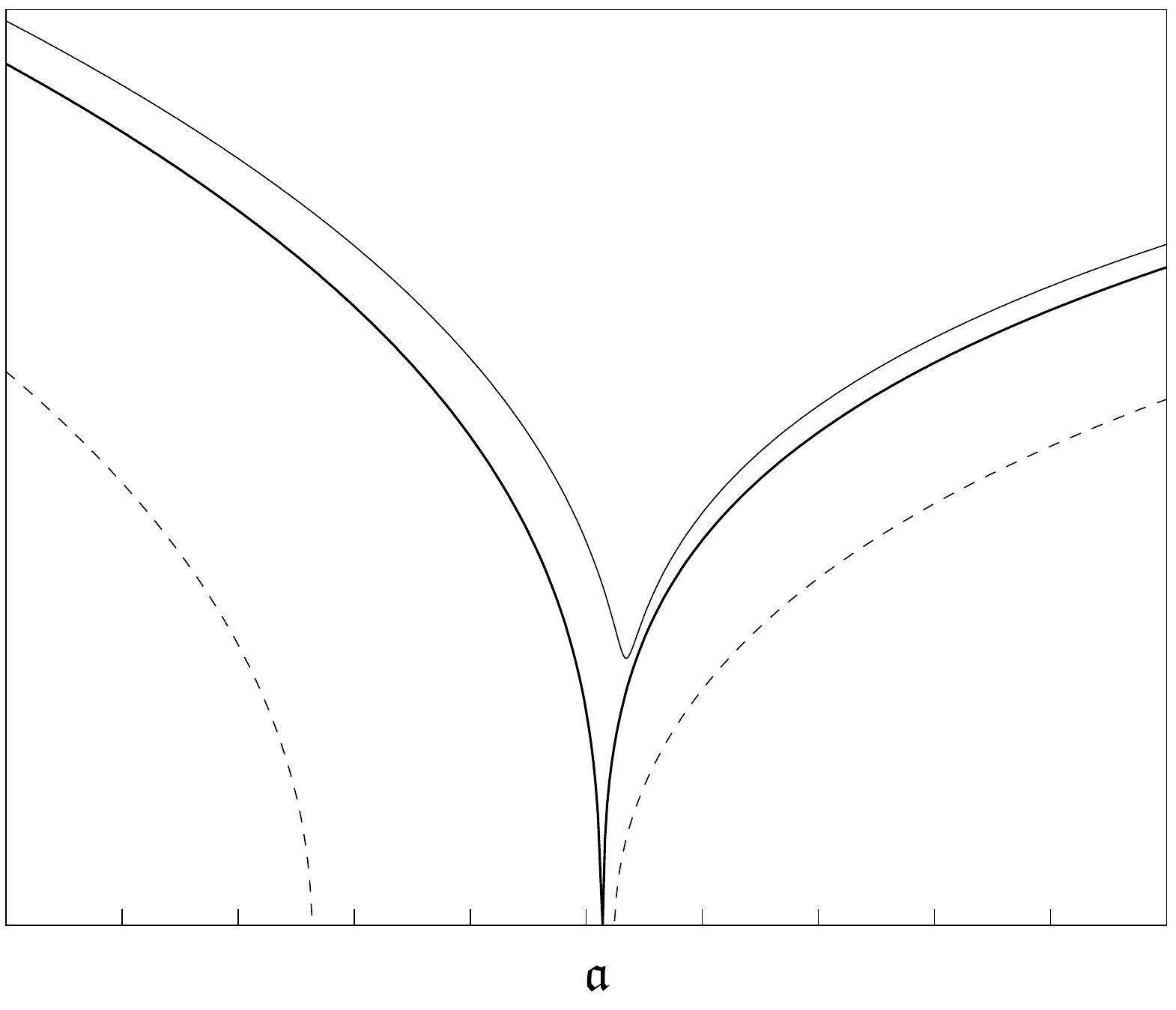}
\caption{Zoom of the density of $\mu_N$ near the cusp point
$\frak a$. The thick curve is the density of $\mu$ in the framework
of Figure~\ref{fig:cusp}. The thin curve (resp.~the dotted curve) is the 
density of $\mu_N$ when $\sqrt{N} g'_N(\frak c_N) > 0$
(resp.~$\sqrt{N} g'_N(\frak c_N) < 0$).}  
\label{fig:zoomcusp}
\end{figure}

\begin{remark}
When the assumption \eqref{ass:speed} is not satisfied, namely when $g'_N(\frak c_N)$ goes to zero  as $N\to\infty$ (which is always true by Proposition \ref{prop:sequence-cN}) slow enough so that $\sqrt{N} g'_N(\frak c_N)$ diverges to plus or minus infinity, we do not expect the Pearcey kernel to arise. See \cite{HHN-esaim-preprint} for further discussion.
\end{remark}


The proof of Theorem \ref{th:main} is provided in Section \ref{sec:proof-pearcey}.

\subsection{Asymptotic expansion at the hard edge}
\label{sec:description-BE} 

Our last result concerns the behaviour of the smallest random eigenvalue of $\bv M_N$ when the hard edge is present. Recall that $n/N\to\gamma$. By Proposition \ref{prop:hard-edge-density}, the limiting density displays a hard edge if and only if $\gamma=1$. With this respect, we restrict ourselves here to the case  $n=N+\alpha$ where $\alpha\in\mathbb Z$  is fixed and does not depend on $N$. 

\subsubsection{The Bessel kernel}

The Bessel function of the first kind $J_\alpha$ with parameter $\alpha\in\mathbb Z$ is defined by 
\eq
\label{series rep Bessel}
J_\alpha(x) = \left( \frac x2\right)^\alpha \sum_{n=0}^\infty \frac{(-1)^n}{n!\,  \Gamma(n+\alpha+1)} \left( \frac x2\right)^{2n},\qquad x>0\  ,
\qe
with the convention that, when $\alpha<0$, the first $|\alpha|$ terms in the series vanish (since the Gamma function $\Gamma$ has simple poles on the non-positive  integers).  

The \textbf{Bessel kernel} is then defined for $x,y>0$ by
\eq
\label{Bessel kernel}
\Kbe(x,y)=\frac{\sqrt{y}J_\alpha(\sqrt{x})J_\alpha'(\sqrt{y})-\sqrt{x}J_\alpha'(\sqrt{x})J_\alpha(\sqrt{y})}{2(x-y)}\ .
\qe
One can alternatively  express it as a double contour integral,
\eq
\label{Bessel kernel bis}
\Kbe(x,y)= \frac 1{(2i\pi)^2} \left( \frac yx\right)^{\frac \alpha 2} \oint_{|z|=r}\frac{\d z} z \oint_{|w|=R}\frac {\d w}w \frac 1{z-w} \left( \frac zw\right)^{\alpha}
e^{-\frac xz +\frac z4 +\frac yw -\frac w4}\ ,
\qe
where $0<r<R$ and the contours are simple and oriented counterclockwise, see  for instance  \cite[Lemma 6.2]{HHN-preprint}. We set for convenience 
\eq
F_{\alpha}(s) = \det\big(I-\Kbe\big)_{L^2(0,s)},\qquad s>0,
\qe
where the right hand side stands for the Fredholm determinant of the restriction to $L^2(0,s)$ of the integral operator $\Kbe$. According to \eqref{gap proba}, $F_{\alpha}(s)$ is the probability that the smallest particle of the determinantal point process associated with the Bessel kernel is larger than $s$. Tracy and Widom \cite{tracy-widom-bessel-94} established that certain simple transformations of $\log F_\alpha(s)$ satisfy Painlev\'e equations (Painlev\'e III and Painlev\'e V are involved).

\subsubsection{Correction for the smallest eigenvalue's fluctuations}
We denote by $x_{\min}$  the smallest random eigenvalue  of $\bv M_N$, namely
\eq
\label{x min}
x_{\min}=
\begin{cases}
x_1& \mbox{ if } \alpha\geq 0,\\
x_{1-\alpha}& \mbox{ if } \alpha<0.
\end{cases}
\qe
Our last result is stated as follows.


\begin{theorem} 
\label{th:edge-expansion} 
Assume $n=N+\alpha$ where $\alpha\in\mathbb Z$ is fixed and does not depend on $N$. Set
\begin{equation}\label{def:sigma-zeta}
\sigma_N=\frac{4}{N}\sum_{j={1}}^n\frac{1}{\lambda_j}\qquad \textrm{and}\qquad \zeta_N=\frac 8N \sum_{i=1}^N \frac 1{\lambda_i^2}\ ,
\end{equation}
so that 
$$
\sigma_N\xrightarrow[N\to\infty]{}  4 \int \frac {\nu(\d \lambda)}{\lambda}\ >\ 0\qquad \textrm{and}\qquad \zeta_N\xrightarrow[N\to\infty]{} 8 \int\frac {\nu(\d \lambda)}
{\lambda^{2}}\ >\ 0
$$ 
by Assumption \ref{ass:nu}. Then, for every $s>0$, we have as $N\to\infty$,
\eq
\p\Big(N^2\sigma_N\,x_{\min}\geq s\Big)=F_\alpha(s)  - \frac1N\left( \frac{\alpha \zeta_N }{\sigma_N^2}\right)  s\frac{\d }{\d s}F_{\alpha}(s) + \bigO{\frac 1{N^2}}\ .
\qe 
\end{theorem}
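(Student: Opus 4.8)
The plan is to express the probability as a Fredholm determinant, expand the relevant correlation kernel to the next order in $1/N$ via a steepest‑descent analysis, recognise the $1/N$ correction of the kernel as the generator of dilations applied to the Bessel kernel, and finally read off the correction of the Fredholm determinant; this is the determinantal counterpart of the operator‑theoretic routes of \cite{bornemann-2014-note,PS} and extends the formula of \cite{EGP-preprint}. By Assumption~\ref{ass:gauss} the random eigenvalues of $\bv M_N$ form a determinantal point process with a finite‑rank kernel $\K_N(x,y)$ whose explicit double contour integral representation — the finite‑$N$ analogue of \eqref{Bessel kernel bis} — is provided in Section~\ref{sec:Pearcey}; see also \cite{BBP-2005,On}. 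Since $x_{\min}$ is the smallest of these random eigenvalues, which are a.s. positive, formula \eqref{gap proba} gives
$$\p\big(N^2\sigma_N\,x_{\min}\geq s\big)=\det(I-\K_N)_{L^2(0,\,s/(N^2\sigma_N))}=\det\big(I-\K_N^{\mathrm{sc}}\big)_{L^2(0,s)},$$
where $\K_N^{\mathrm{sc}}(x,y)=\frac{1}{N^2\sigma_N}\K_N\!\big(\frac{x}{N^2\sigma_N},\frac{y}{N^2\sigma_N}\big)$ is the hard‑edge rescaling of $\K_N$; the deterministic zero eigenvalues present when $\alpha<0$ play no role. Recall that, as noted in Remark~\ref{analogy-soft}, the hard edge corresponds to the preimage $\frak c=\infty$, at which the Bessel kernel was already obtained in \cite{HHN-preprint}.

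The next step is to push the steepest‑descent analysis that yields $\K_N^{\mathrm{sc}}\to\Kbe$ in \cite{HHN-preprint} one order further. Writing the integrand of $\K_N$ as $e^{N\Phi_N(z,w)}$ times a prefactor containing the factor $(w/z)^{\alpha}$ — which, together with the mismatch $n=N+\alpha$ between the two index ranges in \eqref{def:sigma-zeta}, is the only place where $\alpha$ enters — one localises near the critical point, rescales the integration variables and Taylor‑expands $\Phi_N$ and the prefactor. The first two coefficients of this expansion are, up to explicit combinatorial constants, $\sigma_N=\frac4N\sum_j\lambda_j^{-1}$ and $\zeta_N=\frac8N\sum_i\lambda_i^{-2}$, which explains why $N^2\sigma_N$ is the correct scaling and $\zeta_N/\sigma_N^2$ the natural second‑order parameter. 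This produces, uniformly for $x,y$ in compact subsets of $(0,\infty)$ together with an integrable bound as $x,y\to 0_+$,
$$\K_N^{\mathrm{sc}}(x,y)=\Kbe(x,y)+\frac1N\,\mathbb L_N(x,y)+\bigO{\frac1{N^2}},$$
with $\mathbb L_N$ explicit; the bound near the edge upgrades this to convergence in trace norm on $L^2(0,s)$, of $\K_N^{\mathrm{sc}}$ towards $\Kbe$ and of $N(\K_N^{\mathrm{sc}}-\Kbe)$ towards $\mathbb L_N$.

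The key algebraic point is that the correction is, up to terms whose trace against $(I-\Kbe)^{-1}$ vanishes, a pure dilation: one shows
$$\mathbb L_N(x,y)=-\frac{\alpha\zeta_N}{\sigma_N^2}\Big(\Kbe(x,y)+x\,\partial_x\Kbe(x,y)+y\,\partial_y\Kbe(x,y)\Big),$$
equivalently $\K_N^{\mathrm{sc}}(x,y)=\lambda_N\Kbe(\lambda_N x,\lambda_N y)+\bigO{N^{-2}}$ with $\lambda_N=1-\frac1N\frac{\alpha\zeta_N}{\sigma_N^2}+o(1/N)$. Conjugating by the dilation $h\mapsto\lambda^{1/2}h(\lambda\cdot)$ gives $\det\big(I-\lambda\Kbe(\lambda\cdot,\lambda\cdot)\big)_{L^2(0,s)}=\det(I-\Kbe)_{L^2(0,\lambda s)}=F_\alpha(\lambda s)$, and differentiating at $\lambda=1$ yields $\Tr\big[(I-\Kbe)^{-1}(\Kbe+x\partial_x\Kbe+y\partial_y\Kbe)\big]_{L^2(0,s)}=-sF_\alpha'(s)/F_\alpha(s)$. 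Feeding the trace‑norm expansion into first‑order perturbation theory for Fredholm determinants, $\det(I-\Kbe-\tfrac1N\mathbb L_N)_{L^2(0,s)}=F_\alpha(s)\big(1-\tfrac1N\Tr[(I-\Kbe)^{-1}\mathbb L_N]\big)+\bigO{N^{-2}}$, we obtain
$$\det\big(I-\K_N^{\mathrm{sc}}\big)_{L^2(0,s)}=F_\alpha(s)-\frac1N\frac{\alpha\zeta_N}{\sigma_N^2}\,sF_\alpha'(s)+\bigO{\frac1{N^2}},$$
which is the claim; the stated limits of $\sigma_N$ and $\zeta_N$ follow from Assumption~\ref{ass:nu}.

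The delicate part is the second step: obtaining the $1/N$ term of $\K_N^{\mathrm{sc}}$ with error control that is uniform all the way down to the edge $x,y\to 0_+$ and strong enough to pass to trace norm on $L^2(0,s)$, and carrying out the bookkeeping of every contribution at order $1/N$ — the higher Taylor coefficients of $\Phi_N$, the $(w/z)^{\alpha}$ prefactor, the index mismatch in \eqref{def:sigma-zeta}, the deformation of the contours and the $N$‑rescaling of the integration variables — so that $\mathbb L_N$ is correctly identified with $-\tfrac{\alpha\zeta_N}{\sigma_N^2}$ times the dilation generator of $\Kbe$. Once this expansion is in place, the remaining step is the soft combination of the dilation identity for $F_\alpha$ with first‑order Fredholm perturbation theory.
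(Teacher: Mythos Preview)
Your overall architecture—Fredholm determinant, kernel expansion to order $1/N$, first-order perturbation of the determinant—matches the paper's. Two points diverge, and the second is a genuine gap.

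First, the kernel expansion is not obtained by steepest descent. In the representation actually used (the paper's equation \eqref{KN bessel int}), after the hard-edge rescaling the contours are \emph{fixed} circles $|z|=r$, $|w|=R$, and all of the $N$-dependence sits in the function $G_N(z)=\frac1N\sum_j\log\bigl(\tfrac{z}{N\sigma_N}-\lambda_j\bigr)$ in the exponent. One simply Taylor-expands $G_N$ at $z=0$,
\[
G_N(z)=G_N(0)-\frac{z}{4N}-\frac{\zeta_N}{16N^2\sigma_N^2}\,z^2+O(N^{-3}),
\]
and reads off the correction by expanding $e^{(\cdot)}$ under the integral sign. No saddle point, no deformation, and the edge behaviour is handled by the conjugation $(\,\cdot\,)^{\alpha/2}$ rather than by delicate uniform bounds.

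Second, and more seriously, the $1/N$ correction you posit is not what the computation produces. The paper obtains (Proposition~\ref{prop: kernel exp})
\[
\widetilde\K_N(x,y)=\Bigl(\tfrac{x}{y}\Bigr)^{\alpha/2}\Bigl\{\Kbe(x,y)-\tfrac1N\,\Q(x,y)\Bigr\}+O(N^{-2}),\qquad
\Q=\tfrac{\zeta_N}{4\sigma_N^2}\bigl(\alpha\,J_\alpha(\sqrt{\cdot}\,)\otimes J_\alpha(\sqrt{\cdot}\,)+[\mathrm M,\Kbe]\bigr),
\]
with $\mathrm Mf(x)=xf(x)$. This $\Q$ is \emph{not} a dilation generator: the commutator $[\mathrm M,\Kbe](x,y)=(x-y)\Kbe(x,y)$ is independent of $\alpha$ and is not $x\partial_x\Kbe+y\partial_y\Kbe$, so your ``equivalently $\K_N^{\mathrm{sc}}(x,y)=\lambda_N\Kbe(\lambda_N x,\lambda_N y)+O(N^{-2})$'' is false in the correlated setting. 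Your hedge ``up to terms whose trace against $(I-\Kbe)^{-1}$ vanishes'' would salvage the conclusion only tautologically; you neither derive the actual correction kernel nor identify which pieces are traceless. The paper's route to the trace is tailored to the actual form of $\Q$: the commutator piece satisfies $\Tr\bigl((I-\Kbe)^{-1}[\mathrm M,\Kbe]\bigr)=0$ by cyclicity, and the rank-one piece is evaluated via the Tracy--Widom identity
\[
\bigl\langle J_\alpha(\sqrt{\cdot}\,),(I-\Kbe)^{-1}J_\alpha(\sqrt{\cdot}\,)\bigr\rangle_{L^2(0,s)}=-4s\,\frac{\d}{\d s}\log F_\alpha(s),
\]
which yields the result without invoking dilations at all.
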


The convergence towards $F_\alpha(s)$ has been first observed by Forrester \cite{F} when $\bv\Sigma_N$ is the identity. As for the general $\bv \Sigma_N$ case, it has been established by the authors in \cite{HHN-preprint}. An explicit formula for the $1/N$-correction term was  conjectured when $\bv \Sigma_N$ is the identity by Edelman, Guionnet and P\'ech\'e \cite{EGP-preprint}, a conjecture proved true soon after by Perret and Schehr \cite{PS} and Bornemann \cite{bornemann-2014-note}, with different techniques.  We thus generalize this formula to the general $\bv\Sigma_N$ case. The strategy of the proof is rather similar to Bornemann's one: It relies on an identity involving the resolvent of $\K_\Be^{(\alpha)}$ obtained by Tracy and Widom, although we cannot rely on existing estimates for the kernel in this general setting.  

\begin{remark}
In fact, as we shall see in the proof of Theorem \ref{th:edge-expansion} (see Remark \ref{next order exp}), our method easily yields for every $s>0$ an expansion  of the form 
\eq
\label{hard edge exp ?}
\p\Big(N^2\sigma_N\,x_{\min}\geq s\Big) = F_\alpha(s) + \sum_{\ell=1}^L \frac{1}{N^\ell} C_{N,\ell}^{(\alpha)}(s) + \bigO{\frac{1}{N^{L+1}}}
\qe
for every $L\geq 1$ as $N\to\infty$. Although we are able to provide a close formula for the coefficient $C_{N,1}^{(\alpha)}(s)$ (as stated in Theorem \ref{th:edge-expansion}) thanks to a formula due to Tracy and Widom, to the best of our knowledge the next order coefficients do not seem to benefit from such a simple representation.  
\end{remark}

We prove Theorem \ref{th:edge-expansion} in Section \ref{sec:proof-expansion}.


 \section{Proofs of the limiting density behaviours}
 \label{density proofs}
This section is devoted to the proofs of Proposition \ref{cusp=>inflexion}, \ref{inflexion=>cusp} and \ref{prop:hard-edge-density}.

We first recall a few facts stated in Section \ref{sec:fixed-point} that we shall use in the forthcoming proofs: The map $m$ is the Cauchy-Stieltjes transform \eqref{CS m}; it is analytic on $\C_+$ and extends continuously  to $\C_+\cup\R^*$. Moreover, $\Im (m(x))=-\pi\rho(x)$ for every $x\in \R^*$. The map $g$ defined in \eqref{g(m)} is analytic on $\C_+\cup\C_-\cup D$. In particular, $g'$ has isolated zeroes on $D$.  Moreover, after noticing that $m(\C_+)\subset\C_-$, we have the identity $g(m(z))=z$ for every $z\in\C_+$.
 
We start with a simple but useful fact, which follows by taking the limit $z\in\C_+\to x $ in the previous identity and using the continuity of $m$ and $g$ on their respective domains:
 
 \begin{lemma}
 \label{simple obs}
 If $x\in\C_+\cup\R^*$ is such that $m(x)\in \C_-\cup D$, then  $g(m(x))=x$.
 \end{lemma}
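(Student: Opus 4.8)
The statement is essentially a continuity-plus-limit argument, building on the identity $g(m(z))=z$ valid for every $z\in\C_+$ (recalled just above, which itself comes from the fact that $g$ is the inverse of $m$ on $m(\C_+)$ together with $m(\C_+)\subset\C_-$). The plan is to split into the two cases $x\in\C_+$ and $x\in\R^*$.

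If $x\in\C_+$, there is nothing to do: $m(x)\in\C_-$ automatically, and $g(m(x))=x$ is precisely the identity stated above. The substance is the boundary case $x\in\R^*$. Here I would pick any sequence $(z_k)$ in $\C_+$ with $z_k\to x$. Since $m$ extends continuously to $\C_+\cup\R^*$, we have $m(z_k)\to m(x)$. Moreover, each $m(z_k)$ lies in $\C_-$ because $m(\C_+)\subset\C_-$; in particular the whole sequence $(m(z_k))$ stays inside the domain $\C_+\cup\C_-\cup D$ on which $g$ is analytic, and it converges to the point $m(x)$ which, \emph{by hypothesis}, also belongs to that domain, namely to $\C_-\cup D$. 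Since $g$ is analytic — hence continuous — at $m(x)$, we get $g(m(z_k))\to g(m(x))$. On the other hand $g(m(z_k))=z_k\to x$ by the identity on $\C_+$. Comparing the two limits yields $g(m(x))=x$.

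The only point requiring a little care — and the closest thing to an obstacle — is checking that the approximating points $m(z_k)$ remain in the region where $g$ is defined and analytic, so that continuity of $g$ at $m(x)$ can legitimately be applied along this sequence; this is exactly what $m(\C_+)\subset\C_-\subset \C_+\cup\C_-\cup D$ provides. Everything else is a direct invocation of the stated analyticity of $g$ on $\C_+\cup\C_-\cup D$, the continuity of $m$ on $\C_+\cup\R^*$, and the identity $g\circ m=\mathrm{id}$ on $\C_+$.
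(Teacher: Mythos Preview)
Your proof is correct and follows exactly the approach the paper takes: pass to the limit $z\in\C_+\to x$ in the identity $g(m(z))=z$, using the continuity of $m$ on $\C_+\cup\R^*$ and of $g$ on $\C_+\cup\C_-\cup D$. The paper states this in a single sentence, while you have carefully spelled out the sequence argument and the verification that the approximants $m(z_k)$ stay in the domain of $g$, but the substance is identical.
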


 We will also use the following property.
 
 \begin{lemma}
 \label{g' vanishes}
 If $\frak a\in\supp(\rho)$ satisfies $\rho(\frak a)=0$ and $\frak c=m(\frak a)\in D$, then $g'(\frak c)=0$.
 \end{lemma}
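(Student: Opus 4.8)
The plan is to exploit the relation $g(m(x)) = x$ valid near $\frak{a}$ together with the fact that $\rho(\frak{a}) = 0$ forces $m$ to be real and real-analytic in a punctured neighbourhood of $\frak{a}$.

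First I would observe that since $\frak{a} \in \supp(\rho)$ but $\rho(\frak{a}) = 0$, the Cauchy--Stieltjes transform satisfies $\frak{c} = m(\frak{a}) \in \R$ by \eqref{rho}. By Lemma \ref{simple obs}, since $\frak{c} = m(\frak{a}) \in D \subset \C_- \cup D$, we have $g(\frak{c}) = \frak{a}$. Now I want to run the implicit-function/composition argument: the map $g$ is analytic on $\C_+ \cup \C_- \cup D$ and $g(\frak c) = \frak a$; if $g'(\frak c) \neq 0$, then $g$ would be a local biholomorphism near $\frak c$, hence it would map a full complex neighbourhood of $\frak c$ onto a full complex neighbourhood of $\frak a$. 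The inverse of this local biholomorphism would be an analytic function $h$ near $\frak a$ with $h(\frak a) = \frak c$ and $g(h(w)) = w$. The point is to identify $h$ with $m$ near $\frak a$ and derive a contradiction with $\rho$ being nonzero on one side of $\frak a$.

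The key step is the identification: for $x$ real near $\frak a$, $h(x)$ is real (since $h$ has a real power series, being the local inverse of $g$ which is real-analytic on $D$ near $\frak c$), so $\Im h(x) = 0$ for all real $x$ near $\frak a$. On the other hand, one knows from the Marchenko--Pastur theory that $m(z) = g^{-1}(z)$ in the appropriate sense, and more precisely that $m(x) = h(x)$ whenever both are defined and $h(x) \in D$ — this follows because $g(m(z)) = z$ on $\C_+$ and $g(h(z)) = z$ near $\frak a$, and these two analytic functions agree on a set with an accumulation point (one uses that $m$ extends continuously and $g$ is injective on a small enough neighbourhood). Consequently $\rho(x) = -\tfrac 1\pi \Im m(x) = -\tfrac 1\pi \Im h(x) = 0$ for all real $x$ in a neighbourhood of $\frak a$. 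But this contradicts $\frak a \in \supp(\rho)$, since the support is the closure of $\{\rho > 0\}$ and $\frak a$ cannot be an isolated point of $\supp(\rho)$ sitting inside an interval where $\rho \equiv 0$. Therefore $g'(\frak c) = 0$.

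The main obstacle is the clean identification of the local analytic inverse $h$ of $g$ with the Cauchy transform $m$ near $\frak a$ — one must be careful that $m$ is a priori only defined on $\C_+ \cup \R^*$ (not on a full complex neighbourhood) and that $h$ takes values in $D$ near $\frak a$ so that Lemma \ref{simple obs} applies; this value-in-$D$ property follows from continuity since $h(\frak a) = \frak c \in D$ and $D$ is open. Once the identification is in place, the contradiction is immediate. An alternative, perhaps cleaner, route avoids $h$ altogether: differentiate $g(m(z)) = z$ to get $g'(m(z)) m'(z) = 1$ on $\C_+$, pass to the boundary $z \to \frak a$ using continuity of $m$ and $g'$ (valid since $\frak c \in D$), and argue that $m'(\frak a)$ must blow up — indeed the cube-root vanishing of $\rho$ (or merely the fact that $\rho$ vanishes at an interior point while being analytic and positive nearby) is incompatible with $m$ being Lipschitz at $\frak a$, forcing $g'(\frak c) = 1/m'(\frak a) = 0$; I would present whichever of these two arguments the subsequent proofs in the paper build on most directly.
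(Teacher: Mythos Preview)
Your first argument (contradiction via the local holomorphic inverse $h$ of $g$) is correct, but it is a genuinely different route from the paper's. The paper does not argue by contradiction; it introduces the auxiliary function
\[
G(m) = -\frac{1}{|m|^2} + \gamma\int\frac{\lambda^2}{|1-\lambda m|^2}\,\nu(\d\lambda), \qquad m \in \C_- \cup D,
\]
which is continuous, coincides with $g'$ on $D$, and satisfies $\Im(z) = \Im(m(z))\,G(m(z))$ for $z \in \C_+$. Taking $z \to x$ with $\rho(x) > 0$ forces $G(m(x)) = 0$; choosing a sequence $x_k \to \frak a$ with $\rho(x_k) > 0$ (available since $\frak a \in \supp(\rho)$) and using continuity of $G$ and $m$ yields $g'(\frak c) = G(\frak c) = 0$ directly. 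The paper's approach has the practical advantage that $G$ is reused later (proofs of Lemma \ref{reverse bij} and Lemma \ref{lemma hard edge}). Your approach is more conceptual and avoids any computation, though one expository point deserves tightening: the phrase ``these two analytic functions agree on a set with an accumulation point'' is misleading since $m$ is not known to be analytic near $\frak a$; the identification $m(x)=h(x)$ is correctly obtained from the local injectivity of $g$ near $\frak c$ (which you do mention), not from the identity theorem.

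Your second alternative, however, should be discarded. Invoking the cube-root behaviour of $\rho$ is circular, since that is exactly what Proposition \ref{cusp=>inflexion} proves \emph{using} the present lemma; and the weaker variant (``$\rho$ vanishes at an interior point while being positive nearby'') presupposes both that $\frak a \in \interior(\supp(\rho))$ and that $\rho>0$ on a punctured neighbourhood, neither of which is in the hypothesis here --- the latter is established as \eqref{isolated} only after this lemma.
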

 
 \begin{proof}
Consider  the map
\eq
\label{G map}
G(m)=-\frac{1}{|m|^2}+\gamma\int\frac{\lambda^2}{|1-\lambda m|^2}\nu(\d \lambda)\ ,\qquad m\in\C_-\cup D,
\qe
and notice it is continuous on its definition domain  by dominated convergence. It follows from the definition \eqref{g(m)} of $g$ that $G(m)=g'(m)$ when $m\in D$, and  moreover that for every $m\in\C_-$ we have the identity
\[
\Im(g(m))=\Im(m)  G(m).
\]
By using the fact that $g(m(z))=z$ on $\C_+$,  we thus obtain for every $z\in \C_+$,
\eq
\label{eq imag}
\Im (z) = \Im (m(z)) G(m(z)).
\qe
If $x\in \R^*$ is such that $\rho(x)>0$, then by letting $z\in\C_+\to x$ in \eqref{eq imag}  we see that necessarily $G(m(x))=0$ because  $\lim_{z\to x} \Im (m(z)) =\Im (m(x))=-\pi \rho(x)< 0$. Now, since $\frak a\in\supp(\rho)$ by assumption, there exists a sequence $(x_k)$ such that $x_k\to \frak a$ as $k\to\infty$ and $\rho(x_k)>0$, and hence $G(m(x_k))=0$. Since $m(x_k)\to m(\frak a)=\frak c$ and $\frak c\in D$, this yields $g'(\frak c)=G(\frak c)=0$. 
 \end{proof}

 \label{sec:density behaviours}

 \subsection{Density behaviour near a cusp point: Proof of Proposition \ref{cusp=>inflexion}}\label{proof:cusp=>inflexion}

 We now turn to the proof of the first proposition.
 
 \begin{proof}
 
  Assume that $\frak a\in\interior(\supp(\rho))$ and $\rho(\frak a)=0$.  Set $\frak c=m(\frak a)$ and assume moreover that $\frak c\in D$. Thus, the facts that $g(\frak c)=\frak a$ and $g'(\frak c)=0$ directly follows from Lemma \ref{simple obs} and Lemma \ref{g' vanishes}. 

First, we prove that $g''(\frak c)=0$. To do so, we show that $g'>0$ on $(\frak c-\eta,\frak c+\eta) - \{\frak c\}$ for some $\eta>0$. Since $g'(\frak c)=0$ this would indeed yield that $\frak c$ is a local extremum for $g'$. We proceed by contradiction: Assume there exists a sequence $(x_k)$ in $D-\{\frak c\}$ such that $x_k\to\frak c$ and $g'(x_k)\leq 0$.  Since $g'$ has isolated zeroes on $D$, necessarily $g'(x_k)<0$ for every $k$ large enough. It then follows from \eqref{outside support} that $g(x_k)\in \R- \supp(\rho)$  and, since $g(x_k)\to g(\frak c)=\frak a$, this contradicts the assumption that $\frak a\in\interior(\supp(\rho))$.

Next, we similarly show that there exists $\eta>0$ such that  
\begin{equation}\label{isolated}
 x\in (\frak a-\eta, \frak a+\eta)- \{\frak a\}\quad  \Rightarrow  \quad \rho(x)>0\ .
\end{equation} 
We will use \eqref{isolated} later on in this proof. Assume there exists a sequence $(x_k)$ in $\R^*-\{\frak a\}$ such that $x_k\to \frak a$ and $\rho(x_k)=0$. Since $\frak a\in\interior(\supp(\rho))$ and $m(\frak a)\in D$ by assumption, then $x_k\in\supp(\rho)$ and $m(x_k)\in D$ for every $k$ large enough. Moreover, we have $m(x_k)\to m(\frak a)$, but since Lemma \ref{g' vanishes} then yields $g'(m(x_k))=0$, this contradicts that $g'$ has isolated zeroes on $D$ and  \eqref{isolated} follows. 

Now, we show that $g^{(3)}(\frak c) > 0$ by direct computation. Recalling the definition \eqref{g(m)} of $g$, the equation  $g''(\frak c)=0$ reads
\[
- \frac{1}{\frak c^3}=\gamma \int \frac{\lambda^3}{(1-\lambda \frak c)^3}\, \nu(\d \lambda)\ .
\]
As a consequence, we obtain 
\begin{align*}
\frac16g^{(3)}(\frak c) & = - \frac 1{{\frak c}^4} +\gamma \int \frac{\lambda^4}{(1-\lambda{\frak c})^4} \,\nu(\d\lambda)\\
& =  \frac 1{{\frak c}}\left(\gamma \int \frac{\lambda^3}{(1-\lambda \frak c)^3}\,\nu(\d \lambda)\right) +\gamma \int \frac{\lambda^4}{(1-{\lambda\frak c})^4}\, \nu(\d\lambda) \\
& = \gamma \int \frac{\lambda^3}{\frak c (1-\lambda \frak c)^4}\,\nu(\d \lambda)>0\ .
\end{align*}

We finally turn to the proof of the cube root behaviour \eqref{cubic-root}.  
Since $g'(\frak c) = g''(\frak c) = 0$ and $g^{(3)}(\frak c) > 0$, there exists an analytic map $\varphi$ defined on a complex neighbourhood of $\frak c$ such that $g(m) - \frak a = \varphi(m)^3$ and $\varphi'(\frak c)\neq 0$, see e.g.  \cite[Theorem 10.32]{book-rudin-real-complex}.  In particular, we have $\varphi'(\frak c)^3 = g^{(3)}(\frak c) / 6\stackrel{\triangle}= C>0$. Moreover,  the inverse function theorem yields that $\varphi$ has a local inverse $\varphi^{-1}$, defined on a neighbourhood of zero, such that $\varphi^{-1}(0)=\frak c$ and $\left(\varphi^{-1}\right)^{'}(0)=  1/\varphi'(\frak c)$.  If $|x-\frak a|$ is small enough, then $\rho(x)>0$ by \eqref{isolated}, hence $m(x)\in \C_-$, and we have $g(m(x))=x$ by Lemma \ref{simple obs}.  In particular, we have 
 \[
\varphi(m(x))^3 = g(m(x)) - \frak a  =  x - \frak a 
\] 
and, by taking the cube root (principal determination), applying $\varphi^{-1}$, and performing a  Taylor expansion, we obtain
\eq
\label{m(x) exp cusp}
m(x) =\frak c + \boldsymbol{j}  \left(\frac{x-\frak a}{C}\right)^{1/3} + o(x-\frak a)^{1/3}
\qe
where $\bs j$ is an undetermined cube root of unity. Since $ m(x)\in\C_-$, necessarily $\boldsymbol{j}=\exp(2i\pi/3)$ if $x<\frak a$ and $\boldsymbol{j}=\exp(-2i\pi/3)$ if $x>\frak a$. Finally, \eqref{cubic-root} follows by taking the imaginary part in \eqref{m(x) exp cusp}.

Proof of Proposition \ref{cusp=>inflexion} is therefore  complete.

\end{proof}

 \subsection{Identification of a cusp point: Proof of Proposition \ref{inflexion=>cusp}} \label{proof:inflexion=>cusp}

The main part of the proof consists in showing the following lemma.

\begin{lemma}  
\label{reverse bij} 
Let $\frak c\in D$ such that $g'(\frak c)=0$. 
Then $g(\frak c)\neq 0$ and $m(g(\frak c))=\frak c$. 
\end{lemma}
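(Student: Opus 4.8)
\textbf{Plan of proof for Lemma \ref{reverse bij}.}

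The plan is to exploit the fact that $g$ is a genuine inverse of $m$ on the relevant domains, together with the monotonicity/openness properties of $g$ on $D$ recorded earlier, to locate $\frak c$ as a preimage of a point in the spectrum. First I would rule out $g(\frak c)=0$: since $\frak c\in D$ and $g'(\frak c)=0$, if we had $g(\frak c)=0$ then, because $g$ restricted to the connected component $I$ of $D$ containing $\frak c$ is real-analytic with an isolated critical point at $\frak c$, the value $g(\frak c)$ would be a local extremum of $g$ on $I$. A short computation from \eqref{g(m)}, using the sign of the terms $1/m^2$ and $\gamma\int \lambda^2(1-\lambda m)^{-2}\nu(\d\lambda)$ — exactly the kind of computation already carried out in the proof of Proposition \ref{cusp=>inflexion} to get $g^{(3)}(\frak c)>0$ — shows $g$ cannot vanish at a critical point (one uses that $g(\frak c)=0$ combined with $g'(\frak c)=0$ would force an incompatible sign relation between $1/\frak c$ and the integral term). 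Alternatively, $0\notin D$-adjacent behaviour: $g(m)\to\pm\infty$ as $m$ approaches the poles $1/\lambda_j$, so the structure of $g$ on $I$ is constrained enough to exclude the critical value $0$.

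For the main assertion $m(g(\frak c))=\frak c$, set $\frak a=g(\frak c)$, which is a nonzero real number by the first part. I would argue that $\frak a\in\supp(\rho)$. Indeed, by \eqref{outside support}, a point $g(m)$ with $m\in D$ lies outside $\supp(\rho)$ precisely when $g'(m)<0$; here $g'(\frak c)=0$, so $\frak a$ is a boundary case. Using that $g'$ has isolated zeros on $D$ (stated in the paragraph before Lemma \ref{simple obs}) and invoking the local behaviour of $g$ near $\frak c$ — since $g''(\frak c)=0$ as well (this is forced: $\frak c$ is an isolated critical point, so on each side $g'$ keeps a sign; and the analysis of Proposition \ref{cusp=>inflexion} shows the relevant sign is $g'>0$ on a punctured neighbourhood, whence $g''(\frak c)=0$ and $g^{(3)}(\frak c)>0$) — one gets $g(m)\ne \frak a$ for $m$ in a punctured neighbourhood of $\frak c$ in $D$, and $g(m)$ on both sides approaches $\frak a$ from within $\supp(\rho)$. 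Hence $\frak a\in\interior(\supp(\rho))$ and $\rho(\frak a)=0$ is what we are ultimately proving; but for the present lemma what I need is just $\frak a\in\R^*$ together with the identification of its preimage.

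To conclude $m(\frak a)=\frak c$: by Lemma \ref{simple obs}, if $m(\frak a)\in\C_-\cup D$ then $g(m(\frak a))=\frak a=g(\frak c)$. Now I would use injectivity of $g$ in the appropriate sense. The map $g$ is the inverse of $m$ on $m(\C_+)\subset\C_-$, so $g$ is injective on $\C_-$; and $g$ is strictly monotone on each component of $D$ between consecutive critical points, with the behaviour near $\frak c$ (where $g'$ changes from $>0$ to $>0$, i.e. $\frak c$ is not a point of non-injectivity locally since $g^{(3)}(\frak c)>0$ makes $g$ strictly increasing through $\frak c$) ensuring that $\frak c$ is the unique solution in a neighbourhood. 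Since $m(\frak a)$ is real (as $\rho(\frak a)=0$, by \eqref{rho}) and lies in $D$ (because $\frak c\in D$ and $m(\frak a)$ is near $\frak c$ by continuity of $m$ and the earlier limiting argument, or because $g(m(\frak a))=\frak a$ is finite), the equality $g(m(\frak a))=g(\frak c)$ with both arguments in the same component of $D$ where $g$ is injective forces $m(\frak a)=\frak c$. The main obstacle I anticipate is the bookkeeping needed to guarantee $m(\frak a)\in D$ (rather than merely $m(\frak a)\in\R^*$) and that it sits in the same injectivity interval as $\frak c$; this is handled by combining the continuity of $m$ on $\R^*$ with an approximation $\frak a\leftarrow x_k$ by points with $\rho(x_k)>0$, exactly as in the proof of Lemma \ref{g' vanishes} and Proposition \ref{cusp=>inflexion}.
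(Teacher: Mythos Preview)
Your plan has a genuine gap in the second (main) part, and it stems from assuming more than the lemma gives you. The hypothesis is only $\frak c\in D$ with $g'(\frak c)=0$; you assert that ``$g''(\frak c)=0$ as well (this is forced\ldots)'' but this is false in general. A soft edge (Remark~\ref{analogy-soft}) is precisely the case $g'(\frak c)=0$, $g''(\frak c)\neq 0$, and the lemma must cover that situation too. Much of your subsequent reasoning---that $g'>0$ on a punctured neighbourhood, that $\frak a\in\interior(\supp(\rho))$, that $\rho(\frak a)=0$, and the appeal to the local structure from Proposition~\ref{cusp=>inflexion}---rests on this unwarranted extra assumption and on conclusions that Proposition~\ref{inflexion=>cusp} is itself trying to establish via this lemma. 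That is circular.

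Even granting those issues, the final step ``$g(m(\frak a))=g(\frak c)$ together with injectivity forces $m(\frak a)=\frak c$'' is not justified: you need $m(\frak a)\in D$ to invoke Lemma~\ref{simple obs}, and your argument for this (``$m(\frak a)$ is near $\frak c$ by continuity'') presupposes $m(\frak a)=\frak c$. Nor is $g$ injective on a whole component of $D$ in general. The paper avoids all of this with a direct analytic estimate: from the fixed-point equation one derives the identity
\[
(m(z)-\frak c)\Bigl(1-\gamma\!\int \Phi(\lambda m(z))\Phi(\lambda\frak c)\,\nu(\d\lambda)\Bigr)=(g(\frak c)-z)\,m(z)\,\frak c,\qquad \Phi(u)=\tfrac{u}{1-u},
\]
and then uses $g'(\frak c)=0$ (only) to bound the bracket from below by $C|m(z)-\frak c|^2$. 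Letting $z\to g(\frak c)$ forces $m(z)\to\frak c$. For the first claim, the paper's one-line argument is also cleaner than your sketch: $g(m)+mg'(m)=\gamma\int \lambda(1-\lambda m)^{-2}\nu(\d\lambda)>0$ for $m\in D$, so $g(\frak c)\neq 0$ whenever $g'(\frak c)=0$.
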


Equipped with Lemma \ref{reverse bij}, let us first show how  the proposition follows.

\begin{proof}[Proof of Proposition \ref{inflexion=>cusp}] 

Assume that $\frak c\in D$ satisfies $g'(\frak c)=g''(\frak c)=0$ and set
$\frak a=g(\frak c)$.  In particular, $\frak a \in\supp(\rho)$ by
\eqref{outside support}, and we just have to show that $\rho(\frak a)=0$ and
$\frak a \notin \partial \supp(\rho)$. We know from Lemma~\ref{reverse bij} 
that $\frak a\neq 0$ and $m(\frak a)=\frak c$. 
As a consequence, $\rho(\frak a)=-\Im(m(\frak a))/\pi=0$. Finally, since $\frak a=g(\frak c)$ with $\frak c\in D$ and $\rho(\frak a)=0$, then \cite[Theorem 5.2]{silverstein-choi-1995} shows the condition $\frak a\in \partial \supp(\rho)$ requires $g''(\frak c)\neq 0$, which is not possible by assumption. 


\end{proof}

We now prove the lemma.

\begin{proof}[Proof of Lemma \ref{reverse bij}]
For every $m\in D$ we have 
\[
g(m)+m g'(m)=\gamma \int \frac{\lambda}{(1-\lambda m)^2}\,\nu (\d \lambda)
 \neq 0\ ,
\]
and because $g'(\frak c)=0$ by assumption, by taking $m=\frak c$ we see that 
necessarily $g(\frak c) \neq 0$. Let us now prove that 
$m(g(\frak c)) = \frak c$.
Introduce for convenience the map 
$$
\Phi(z)=\frac z{1-z}\ .
$$ 
By combining  the fixed point equation \eqref{cauchy eq} for $m(z)$ with $z\in\C_+$ and that 
\[
\frac{1}{\frak c} = g(\frak c)  - g(\frak c) + \frac{1}{\frak c} = g(\frak c) - \gamma \int \frac{\lambda}{1-\frak c \lambda}\,\nu(\d\lambda)\ ,
\]
we obtain
\begin{align*}
m(z)-\frak c & = \frak c m(z)\left( \frac1{\frak c} - \frac1{m(z)} \right)\\
& = \frak c m(z)( g(\frak c)-z) +\gamma (m(z)-\frak c)\int \Phi(\lambda m(z))\Phi(\lambda \frak c)\,\nu(\d \lambda)\ .
\end{align*}
By reorganizing this equation as 
\begin{equation}\label{eq:contrainte}
(m(z) - \frak c) 
\Bigl( 1 - \gamma 
\int \Phi(\lambda m(z)) \, \Phi(\lambda \frak c) \, \nu(\d\lambda) \Bigr) = 
 (g(\frak c)-z) m(z) \frak c \ ,
\end{equation}
we see the lemma would follow by taking the limit  $z\in \C_+ \to g(\frak c)\in\R^*$ assuming  that 
\eq
\label{LB to prove}
\left|1 - \gamma \int \Phi(\lambda m(z)) \, \Phi(\lambda \frak c) \, \nu(\d\lambda) \right| \geq C | m(z) - \frak c |^2 \ ,\qquad  z\in\C_+\to g(\frak c),
\qe
for some constant $C>0$. To show \eqref{LB to prove}, we start by writing  
\begin{align}
\label{ineq start}
& \left|1 - \gamma \int \Phi(\lambda m(z)) \, \Phi(\lambda \frak c) \, \nu(\d\lambda) \right| \nonumber \\
 &   \geq 1-\gamma\int \big|\Phi(\lambda m(z)) \, \Phi(\lambda \frak c)\big |\, \nu(\d\lambda)  \\
 & =  1 - \frac{\gamma}{2}\left\{\int \big|\Phi(\lambda m(z)) \big|^2\,  \nu(\d\lambda)\nonumber
 + 
\int \big| \Phi(\lambda \frak c) \big|^2\, \nu(\d\lambda)
-
\int \big|\Phi(\lambda m(z)) -\Phi(\lambda \frak c) \big|^2\, \nu(\d\lambda)
 \right\}\ .
\end{align}
Recalling the definition \eqref{G map} of $G$, we see from \eqref{eq imag}  that $G(m(z)) < 0$ on $\C_+$, and this yields 
 
\begin{equation}
\limsup_{z\in\C_+ \to g(\frak c)} \gamma 
\int \big| \Phi(\lambda m(z)) \big|^2 \, \nu(\d\lambda) \leq 1 \ .
\end{equation}
Similarly, the equation $G(\frak c)=g'(\frak c) =0$ reads 
\eq
\gamma \int \big|\Phi(\lambda \frak c)\big|^2 \, 
\nu(\d\lambda) = 1 \ . 
\qe
Finally, we have 
\begin{align}
\int \big| \Phi(\lambda m(z)) - \Phi(\lambda \frak c) \big|^2
\nu(\d\lambda)  & = 
| m(z) - \frak c |^2 
\int \Bigl| \frac{\lambda}{(1-\lambda m(z))(1-\lambda \frak c)} \Bigr|^2
\nu(\d\lambda) \, \nonumber \\
& \geq 
| m(z) - \frak c |^2 
\int \Bigl| \frac{\lambda}{(1+\lambda |m(z)|)(1+\lambda |\frak c|)} \Bigr|^2
\nu(\d\lambda) 
\end{align}
and moreover,  since $m$ is continuous on $\C_+\cup\R^*$ and $g(\frak c)\in\R^*$, 
\eq
\label{ineq end}
\liminf_{z\in\C_+\to g(\frak c)}\int \Bigl| \frac{\lambda}{(1+\lambda |m(z)|)(1+\lambda |\frak c|)} \Bigr|^2
\nu(\d\lambda)  >0\ .
\qe 
The lower bound \eqref{LB to prove} then follows by combining \eqref{ineq start}--\eqref{ineq end} and the proof  is complete.
\end{proof}

\subsection{Density behaviour near the hard edge: Proof of Proposition \ref{prop:hard-edge-density}}
\label{proof:hard-edge-density}

The key to study the hard edge is to study the map $g$ near infinity, which is holomorphic there. More precisely, we have the analytic  expansion as $z\to 0$, 
\begin{align}
\label{exp g hard edge}
g(1/z)& = z -\gamma z \int \frac1{1-z/\lambda}\nu(\d \lambda) \nonumber\\
 &   = (1-\gamma) z -\left(\gamma\int \lambda^{-1}\nu(\d\lambda)\right) z^2 -\left(\gamma\int \lambda^{-2}\nu(\d\lambda)\right) z^3 + \cdots
\end{align}

The reason why one should do so is that $|m(x)|$ goes to $\infty$ as $x\searrow 0$:

\begin{lemma}
\label{lemma hard edge}
Assume $0\in\supp(\rho)$. Then, there exists $\eta>0$ such that $\rho(x)>0$ on $(0,\eta)$. Moreover, as $x>0$ decreases to zero, we have $|m(x)|\to+\infty$.
\end{lemma}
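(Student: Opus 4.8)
\textbf{Proof plan for Lemma \ref{lemma hard edge}.}

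The plan is to use the continuity properties of $m$ on $\C_+\cup\R^*$ together with the identification $g(m(x))=x$ (Lemma \ref{simple obs}), arguing by contradiction in both parts. First I would establish that $\rho$ is positive near the origin. Suppose not: then there is a sequence $x_k\searrow 0$ with $\rho(x_k)=0$, so $m(x_k)\in\R$. Two cases can occur. If the sequence $(m(x_k))$ stays in a bounded region, then, passing to a subsequence, $m(x_k)\to\frak c$ for some finite $\frak c\in\R$; I would check $\frak c\in D$ using Assumption \ref{ass:nu}(a) (so that $1/x_k$ eventually lies outside a fixed neighbourhood of $\supp(\nu)$, hence $\frak c\notin\supp(\nu)$) and $\frak c\neq 0$ (since $m$ is bounded away from $0$ on a neighbourhood of any point of $\R^*$, as $m(x)\neq 0$ there). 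Then Lemma \ref{g' vanishes} applied along the sequence, or rather the argument in its proof showing $G(m(x_k))=0$ whenever $\rho(x_k)>0$ fails, must be replaced: instead I use that $g(m(x_k))=x_k\to 0$ by Lemma \ref{simple obs}, so by continuity of $g$ on $D$ we get $g(\frak c)=0$; but by the expansion \eqref{exp g hard edge} type identity, $g(m)=0$ with $m\in D$ finite forces, via $1/m = \gamma\int\lambda(1-\lambda m)^{-1}\nu(\d\lambda)\cdot m + \cdots$, a contradiction — more directly, $g(\frak c)=0$ means $1/\frak c=-\gamma\int\lambda(1-\lambda\frak c)^{-1}\nu(\d\lambda)$, which one shows is impossible for $\frak c\in D\cap\R$ by a sign/monotonicity argument, or simply because $0\notin\supp(\rho)$ would then follow from \eqref{outside support} if $g'(\frak c)<0$, handled as in Proposition \ref{cusp=>inflexion}. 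The cleaner route: since $0\in\supp(\rho)$ by hypothesis and $\frak a=0$ cannot be an interior cusp point or left edge with finite preimage in $D$ (each of those cases, by Propositions \ref{cusp=>inflexion} and Remark \ref{analogy-soft}, forces $g(\frak c)=\frak a$ with $\frak c$ finite, but then $\frak a\neq 0$ by Lemma \ref{reverse bij} when $g'(\frak c)=0$, and for a left edge similarly), we are forced into the remaining case $|m(x_k)|\to\infty$, which is exactly the second assertion.

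Second, for the divergence $|m(x)|\to+\infty$ as $x\searrow 0$: I would argue again by contradiction. If $|m(x)|$ does not tend to $+\infty$, there is a sequence $x_k\searrow 0$ with $m(x_k)$ bounded; passing to a subsequence $m(x_k)\to\ell\in\C_-\cup\R^*$ (the limit cannot be $0$ since, by the fixed-point equation \eqref{cauchy eq}, $m(x_k)\to 0$ would force $x_k\to\infty$, contradiction; it lies in $\C_-\cup\R^*$ because $m$ maps $\C_+\cup\R^*$ into $\overline{\C_-}$ and $\ell=0$ is excluded). Then, using $g(m(x_k))=x_k$ from Lemma \ref{simple obs} (valid since $m(x_k)\in\C_-\cup D$ — one checks $\ell\in D$ if real, as above, via Assumption \ref{ass:nu}(a)) and continuity of $g$ on $\C_-\cup D$, we get $g(\ell)=0$. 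But an elementary computation shows $g(\ell)=0$ with $\ell\in\C_-$ is impossible: from \eqref{g(m)}, $0=g(\ell)$ gives $1/\ell=-\gamma\int\lambda(1-\lambda\ell)^{-1}\nu(\d\lambda)$, and taking imaginary parts, $\Im(1/\ell)=-\gamma\int\lambda\,\Im((1-\lambda\ell)^{-1})\nu(\d\lambda)$; since $\Im(\ell)<0$ one has $\Im(1/\ell)>0$ while $\Im((1-\lambda\ell)^{-1})$ has the same sign as $\Im(\lambda\ell)<0$, so the right side is $\geq 0$ — forcing $\Im(1/\ell)\le 0$, a contradiction, unless the integral vanishes, i.e. $\ell\in\R$; and the real case $\ell\in D\cap\R$ with $g(\ell)=0$ is likewise ruled out since then $0=g(\ell)$ and, as $0\in\supp(\rho)$, one can derive $g'(\ell)=0$ by the argument of Lemma \ref{g' vanishes}, whence $m(0)=\ell$ by Lemma \ref{reverse bij} would give $g(\ell)\neq 0$, a contradiction. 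Hence $|m(x)|\to+\infty$.

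The main obstacle I anticipate is handling the dichotomy cleanly: at $\frak a=0$ the point is \emph{not} an interior point of the support (it is the left endpoint, the hard edge), so Proposition \ref{cusp=>inflexion} does not apply directly, and I must be careful to rule out that $m(x)$ converges to a \emph{finite} real number as $x\searrow 0$. The heart of the matter is the identity $g(\ell)=0 \Rightarrow$ contradiction for every admissible finite $\ell$ (whether in $\C_-$ or in $D\cap\R$), which should follow from a short sign argument on imaginary parts in $\C_-$ and, on the real axis, from combining Lemma \ref{reverse bij} (which gives $g(\frak c)\neq 0$ whenever $g'(\frak c)=0$) with the fact, extracted as in Lemma \ref{g' vanishes}, that $0\in\supp(\rho)$ together with $m(0)=\ell\in D$ would force $g'(\ell)=0$. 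Once this is in place, both assertions of the lemma fall out: the positivity of $\rho$ on $(0,\eta)$ from the first contradiction argument, and $|m(x)|\to+\infty$ from the second. I would keep the write-up short by isolating the claim ``$g(\ell)\neq 0$ for $\ell\in(\C_-\cup D)$'' as the single computational step and referencing Lemmas \ref{simple obs}, \ref{g' vanishes} and \ref{reverse bij} for the rest.
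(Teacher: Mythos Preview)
Your plan has two genuine gaps.

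\textbf{First part (positivity of $\rho$ near $0$).} Your dichotomy does not close. In the ``unbounded'' branch you conclude $|m(x_k)|\to\infty$ along a sequence with $\rho(x_k)=0$ and say ``this is exactly the second assertion'', but that is not a contradiction: you still have not ruled out $\rho(x_k)=0$. What is missing is the step the paper actually uses: once $|m(x_k)|\to\infty$ you have $m(x_k)\in D$ for large $k$ (since $\supp(\nu)\subset(0,\infty)$ is bounded), and Lemma~\ref{g' vanishes} (applicable because $x_k\in\supp(\rho)$) gives $g'(m(x_k))=0$. Hence $1/m(x_k)\to 0$ is a sequence of zeros of the derivative of the map $z\mapsto g(1/z)$, which is holomorphic at $0$; this contradicts the isolated zero principle. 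Without this analytic argument your first part does not conclude.

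\textbf{Second part (divergence of $|m(x)|$).} Your sign computation for $\ell\in\C_-$ does not yield a contradiction. With $\Im(\ell)<0$ you correctly get $\Im(1/\ell)>0$ and $\Im((1-\lambda\ell)^{-1})<0$, but then the right-hand side $-\gamma\int \lambda\,\Im((1-\lambda\ell)^{-1})\,\nu(\d\lambda)$ is \emph{positive}, matching the left-hand side; nothing forces $\Im(1/\ell)\le 0$. So $g(\ell)=0$ is not excluded by this argument. For $\ell\in D\cap\R$ you invoke ``$m(0)=\ell$'' and the argument of Lemma~\ref{g' vanishes}, but $m$ is only defined on $\C_+\cup\R^*$ and need not have a limit at $0$; and to mimic Lemma~\ref{g' vanishes} you would need a sequence with $\rho>0$ whose $m$-images converge to $\ell$, which you have not produced. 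Finally, verifying $\ell\in D$ (in particular $1/\ell\notin\supp(\nu)$) is not automatic from Assumption~\ref{ass:nu}(a) in the way you suggest.

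\textbf{What the paper does instead.} The paper's route is shorter and avoids all of this: from the fixed-point equation it derives, for every $x>0$ with $\rho(x)>0$, the identity
\[
x=\gamma\int\frac{\lambda}{|1-\lambda m(x)|^2}\,\nu(\d\lambda),
\]
which immediately forces $|m(x)|\to\infty$ along any sequence $x\searrow 0$ with $\rho(x)>0$ (since $\min\supp(\nu)>0$). This single identity both drives the divergence in the second assertion and, for the first assertion, feeds the isolated-zeros contradiction described above (after approximating each $x_k$ with $\rho(x_k)=0$ by a nearby $y_k$ with $\rho(y_k)>0$ using continuity of $m$ on $\R^*$). I recommend you adopt this identity as the engine of the proof rather than trying to rule out a finite limit $\ell$ of $m(x)$ directly.
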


Equipped with this lemma, we first provide a proof of the proposition.

\begin{proof}[Proof of Proposition \ref{prop:hard-edge-density}]
A comparison between \eqref{exp near infty} and \eqref{exp g hard edge} readily yields $g'(\infty)=1-\gamma$ and $g''(\infty)=-2\gamma\int \lambda^{-1}\nu(\d \lambda)<0$. In particular, the equivalence between $ii)$ and $iii)$ is obvious. That $i) \Rightarrow ii)$ follows from \cite[Proposition~2.4(a)(c)]{HHN-preprint}. We show $ii)\Rightarrow i)$ by contradiction: Assume $\gamma=1$ and that there exists $0<a_0<\min\supp(\rho)$. Since $\gamma=1$, by \eqref{mu decomposition} we have $\mu(\d x)=\rho(x)\d x$ and thus $c_0=m(a_0)=\int (a_0- x)^{-1}\rho(x)\d x <0$. A close look at the definition~\eqref{g(m)} of $g$ shows that $g< 0$ on $(-\infty,0)$ (see also \cite[Proposition~2.4(b)]{HHN-preprint}), and thus $g(c_0)<0$. On the other hand, $c_0\in D$ since $c_0<0$ and hence Lemma \ref{simple obs} then yields $0<a_0=g(c_0)$, which is a contradiction.

We now prove the inverse square root behaviour \eqref{inv square root behav}.  Since $g'(\infty)  = 0$ and $g''(\infty) < 0$, there exists an analytic map $\varphi$ defined on a complex neighbourhood of the origin such that $g(1/z) = \varphi(z)^2$ and $\varphi'(0)\neq 0$, see   \cite[Theorem 10.32]{book-rudin-real-complex}.  In particular, we have $\varphi'(0)^2 = g''(\infty) / 2\stackrel{\triangle}= -C<0$. The map $\varphi$ is uniquely defined up to a sign, and thus $\varphi'(0)= \pm i C^{1/2}$. Moreover,  $\varphi$ has a local inverse $\varphi^{-1}$,  defined on a neighbourhood of zero,  so that $\varphi^{-1}(0)=0$ and $\left(\varphi^{-1}\right)^{'}(0)=  1/\varphi'(0)$.
 For every $x>0$ small enough, Lemma \ref{lemma hard edge} yields that $m(x)\in \C_-$, and thus $g(m(x))=x$ by Lemma \ref{simple obs}, and moreover that $1/m(x)$ lies in the definition domain of $\varphi$. As a consequence, 
 \[
\varphi(1/{m(x)})^2 = g(m(x))   =  x 
\] 
and, by taking the square root (principal determination), applying $\varphi^{-1}$, and performing a  Taylor expansion,  we obtain
\eq
\label{m(x) exp hard}
\frac1{m(x)} = \pm\frac 1{i}  \left(\frac{x}{C}\right)^{1/2} + o(x^{1/2})\ ,\qquad x\to 0_+\ .
\qe
Since $ m(x)\in\C_-$, the undetermined sign has to be a minus sign, and \eqref{inv square root behav} follows by taking the inverse and then the imaginary part in \eqref{m(x) exp hard}.

\end{proof}

We finally turn to the proof of the lemma.

\begin{proof}[Proof of Lemma \ref{lemma hard edge}] Let $z=x+iy$ with $x,y>0$ and assume that $\rho(x)>0$. The fixed point  equation \eqref{cauchy eq} yields
\begin{align*}
x\ & =\ \re\left(\frac1{m(z)} + \gamma\int \frac{\lambda}{1-\lambda m(z)}\, \nu(\d \lambda)\right)\\
 & = \ -\re (m(z)) G(m(z)) + \gamma \int \frac{\lambda}{|1-\lambda m(z)|^2} \,\nu(\d \lambda)\, ,
\end{align*}
where $G$ is defined as in \eqref{G map}. When $y\to 0$, since $\rho(x)>0$, we have $m(z)\to m(x)\in\C_-$ and  moreover $G(m(z))\to 0$ for the same reason as in proof of 
Lemma \ref{g' vanishes}. Thus, we obtain,
\eq
\label{eq x hard edge}
x  = \int \frac{\lambda}{|1-\lambda m(x)|^2} \,\nu(\d \lambda)\ .
\qe

We now show that $\rho(x)>0$ for every $x\in(0,\eta)$ for some $\eta>0$ by contradiction: Assume there exists a sequence $(x_k)$ such that $x_k>0$, $x_k\to 0$ as $k\to\infty$, and $\rho(x_k)=0$.  Since $0\in\supp(\rho)$, without loss of generality one can assume that $x_k\in\supp(\rho)$. As a consequence,  using moreover that $m$ is continuous on $\R^*$, one can construct a sequence $(y_k)$ satisfying $y_k>0$, $y_k\to 0$ as $k\to \infty$, $\rho(x_k)>0$, and $|m(x_k)-m(y_k)|\leq 1$. Because Assumption \ref{ass:nu} yields $\min\supp(\nu)>0$, we see from \eqref{eq x hard edge} that necessarily $|m(y_k)|\to+\infty$, and hence $|m(x_k)|\to+\infty$, as $k\to\infty$. In particular, $m(x_k)\in  D$ for $k$ large enough and Lemma \ref{g' vanishes} then yields $g'(m(x_k))=0$. Thus, $(1/m(x_k))$ is a vanishing sequence  as $k\to\infty$ of zeroes of the derivative of $g(1/z)$. But since $g(1/z)$ is holomorphic near zero, and hence so does its derivative, this contradicts the isolated zero principle and  our claim follows. 

Finally, since the identity \eqref{eq x hard edge} thus holds true on $(0,\eta)$,  that $|m(x)|\to+\infty$ follows by letting $x>0$ decrease to zero in this identity.
\end{proof}

\section{Fluctuations around a cusp point: Proof of Theorem \ref{th:main}}\label{sec:proof-pearcey}
\label{sec:Pearcey}

In this Section, we study the local behaviour  of the random eigenvalues  $(x_i)$ of $\bv M_N$ near a regular cusp point and prove Theorem \ref{th:main}. Our asymptotic analysis is based on that, thanks to Assumption \ref{ass:gauss}, the random eigenvalues form a determinantal point process with an explicit kernel $\K_N(x,y)$, see \cite{BBP-2005,On}. The kernel has the following double contour integral formula
\eq
\label{KN}
\K_N(x,y)=\frac{ N}{(2i\pi)^2}\oint_{\Gamma}\d z\oint_{\Theta} \d w\,\frac{1}{w-z} \, e^{- Nx(z-q) +Ny(w-q)}\left(\frac{z}{w}\right)^{ N}\prod_{j=1}^n\left(\frac{w-\lambda_j^{-1}}{z-\lambda_j^{-1}}\right),
\qe
 where the $q\in\R$ is a free parameter and we recall that the $\lambda_j$'s are the eigenvalues of $\bv\Sigma_N$.    
  $\Gamma$ and $\Theta$ are disjoint closed contours such that $\Gamma$ encloses all the $\lambda_j^{-1}$'s  whereas $\Theta$ encloses the origin, see for instance Figure \ref{fig:contour-KN}.  
  
  \paragraph{Convention: } All the contours we consider are simple and oriented counterclockwise. 
  
 \begin{figure}[h]
\centering
\includegraphics[width=0.6\linewidth]{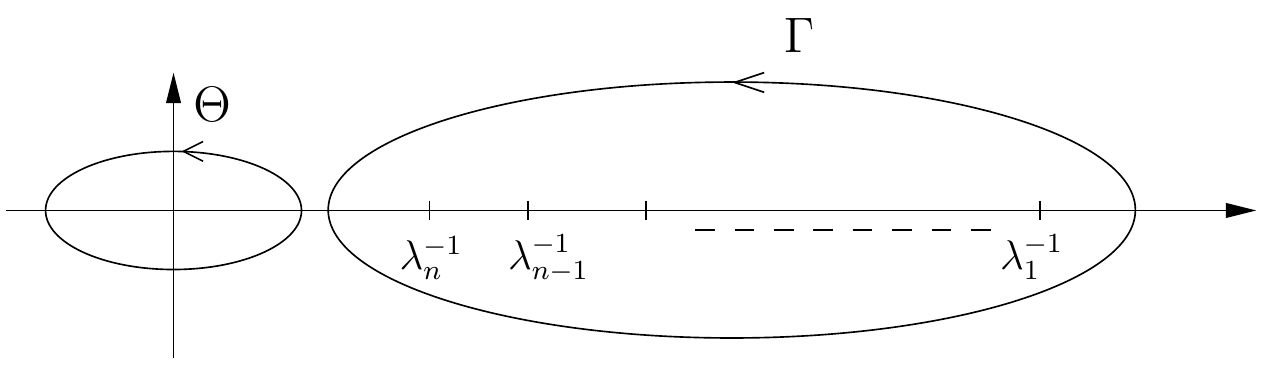}
\caption{The contours $\Gamma$ and $\Theta$ in the definition of  $\K_N(x,y)$}
\label{fig:contour-KN}
\end{figure} 

\begin{remark}
  In \cite{BBP-2005,On}, it is assumed that $q$ satisfies an extra restriction so that the associated operator $\K_N$ is trace class on the semi-infinite intervals $(s,+\infty)$.  Since we are here only interested in establishing a local uniform convergence for $\K_N(x,y)$, such restrictions are not necessary. See also \cite[Remark 4.3]{HHN-preprint}.
  \end{remark}

 In the remaining of this section, we assume that the framework (assumptions and notations) of Theorem \ref{th:main} holds true. We then set
\eq
\label{Ktilde}
\widetilde \K_N(x,y)=\frac{1}{ N^{3/4}\sigma_N} \K_N\left(\frak a_N+\frac{x}{N^{3/4}\sigma_N } , \frak a_N+\frac{y}{N^{3/4}\sigma_N }\right) ,
\qe
and, in order to establish  Theorem \ref{th:main}, focus on the proof of  the uniform convergence
\eq
\label{to show Th5}
\sup_{x,y\in[-s,s]}\Big|\widetilde\K_N(x,y)-\K^{(\tau)}_\Pe(x,y)\Big|\xrightarrow[N\to\infty]{} 0
\qe
for every fixed $s>0$.

\paragraph*{Notations and conventions:} \begin{itemize}
\item We denote by $B(z,\rho)$ the open disc in $\C$ with 
center $z\in \C$ and radius $\rho>0$.

\item By convention, we shall use at several instances a constant $C$ which depends neither on $N$ nor on $x,y\in [-s,s]$, but may depend on $s$, and whose exact value may change
from one ligne to another. 

\item If a contour $\Gamma$ is parametrized by $\gamma:I\rightarrow\Gamma$ for some interval $I\subset \R$, then  for every  map $h:\Gamma\rightarrow\C$ we set
\[
\int_{\Gamma}h(z)|\d z|=\int_Ih\circ\gamma(t)\,|\gamma'(t)|\d t
\]
when it does make sense. In particular, ${\oint}_\Gamma|\d z|$ is the length of a closed contour $\Gamma$.
\end{itemize}

\subsection{Preparation}

It follows from \eqref{KN} and \eqref{Ktilde}, by setting $q=\frak c_N$, that
\eq
\label{tKN int}
\widetilde \K_N(x,y)=\frac{N^{1/4}}{(2i\pi)^2\sigma_N}\oint_\Gamma\d z\oint_\Theta \d w\, \frac{1}{w-z}\,  e^{-\frac{N^{1/4}}{\sigma_N}x(z-\frak c_N)
+\frac{N^{1/4}}{\sigma_N}y(w-\frak c_N)} e^{Nf_N(z)-Nf_N(w)},
\qe
where we introduced the map
\eq
\label{fN}
f_N(z)=-\frak a_{N}(z-\frak  c_N)+\log(z)-\frac{1}{N}\sum_{j=1}^n\log(1-\lambda_j z)\ . 
\qe
Notice the functions $\exp(f_N)$, $\re f_N$,  and the derivatives $f_N^{(k)}$ ($k\ge 1$) are  well-defined on $\C\setminus\{0,\lambda_1^{-1},\ldots,\lambda_n^{-1}\}$. However, one needs to specify an appropriate determination of the complex logarithm in order to define $f_N$ properly.  By Assumption~\ref{ass:nu} and the regularity condition \eqref{RC}, there exists $\epsilon>0$ such that $\lambda_j^{-1}\in (0,+\infty)\setminus B(\frak c,\epsilon)$ for every $1\leq j\leq n$ and every $N$ large enough. As a consequence, if we introduce the compact set
\begin{equation}
\label{def:K}
{\mathcal K} = \left( \left[\inf_N \frac {1}{\lambda_n},\sup_N\frac{1}{\lambda_1}\right] \setminus B(\frak c,\epsilon)\right) \cup \{ 0\} \ ,
\end{equation}
then on every simply connected open subset of $\C\setminus\mathcal K$ one can find a determination of the logarithm so that $f_N$ is well-defined and holomorphic there.

Recalling the definition \eqref{gN} of $g_N$, an essential observation is that 
\eq
\label{fgN0}
f_N'(z)= g_N(z)-\frak a_N\ .
\qe 
As a consequence, we have for every $k\geq 1$,
\eq
\label{fgN1}
 f_N^{(k+1)}(z) =g_N^{(k)}(z)\ .
 \qe
In particular, since $\frak a_N=g_N(\frak c_N)$ by definition, the decay assumption \eqref{ass:speed} and  Proposition~\ref{prop:sequence-cN} provide
\begin{align}
 \label{fgN2}
 f'(\frak c_N)& =0\ ,\\
 \label{fgNspeed}
\sqrt N f_N''(\frak c_N)& \xrightarrow[N\to\infty]{} \tau\ ,\\
 \label{fgN3}
f^{(3)}(\frak c_N)& =0\ ,\\
\label{fgN4}
f_N^{(4)}(\frak c_N)& \xrightarrow[N\to\infty]{} g^{(3)}(\frak c)>0\ .
\end{align}

By performing a Taylor expansion of $f_N$ near $\frak c_N$ in \eqref{tKN int}, one can already guess from \eqref{fgN2}--\eqref{fgN4} and a change of variables that the Pearcey kernel should appear in the large $N$ limit, at least if one restricts the contours $\Gamma$ and $\Theta$ to a neighbourhood of $\frak c_N$ (see also \cite{HHN-esaim-preprint} for a more detailed heuristic, which may serve as a guideline for the forthcoming proof). In a first step, we provide precise estimates in order to prove that claim, see Section \ref{Pearcey step 1}. In a second step, we prove that the contribution coming from the pieces of contour away from $\frak c_N$ are exponentially negligible, see Section \ref{contour def}. To do so, we establish the existence of appropriate contours in the same fashion as in \cite{HHN-preprint}. This will enable us to conclude.  

Notice that one of the key arguments to assert the existence of appropriate contours is the maximum principle for subharmonic functions. This argument only appears in the proof of Lemma \ref{key level sets}, which is similar to the proof of \cite[Lemma 4.11]{HHN-preprint} and hence omitted. The interested reader may refer to \cite{HHN-preprint} for more details.

\subsection{Step 1: Local analysis around $\frak c_N$}\label{local analysis}
\label{Pearcey step 1}

We start with a quantitative Taylor expansion for $f_N$ near $\frak c_N$.

\begin{lemma}
\label{Taylor fN} 
There exists  $\rho_0>0$ and $\Delta=\Delta(\rho_0)>0$ independent of $N$ such
that for every $N$ large enough, $B(\frak c_N,\rho_0) \subset \C \setminus
{\mathcal K}$ and, whatever the analytic representation of $f_N$ on 
$B(\frak c_N,\rho_0)$, we have  for every $z\in B(\frak c_N,\rho_0)$
\[
\big|f_N(z)-f_N(\frak c_N)-\frac 12 g'_N(\frak c_N)(z-\frak c_N)^2-\frac{1}{4!}g^{(3)}_N(\frak c_N) (z-\frak c_N)^4\big|  \leq \Delta|z-\frak c_N|^5.
\]
In particular,  since $g_N'(\frak c_N)$ is real, for every 
$z\in B(\frak c_N,\rho_0)$,
\[
\big|\re f_N(z)-\re f_N(\frak c_N)-\frac 12 g'_N(\frak c_N)\re(z-\frak c_N)^2-\frac{1}{4! }g^{(3)}_N(\frak c_N) \re(z-\frak c_N)^4\big|
\leq \Delta|z-\frak c_N|^5.
\]
\end{lemma}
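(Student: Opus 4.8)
The plan is to prove the claimed quantitative Taylor bound by combining the convergences \eqref{fgN2}--\eqref{fgN4} with a uniform control on the fifth derivative of $f_N$ on a small fixed disc around $\frak c$. First I would fix the geometry: since $\frak c \notin \mathcal{K}$ and $\mathcal{K}$ is compact, choose $\rho_0 > 0$ small enough that $\overline{B(\frak c, 2\rho_0)} \cap \mathcal{K} = \emptyset$; then for $N$ large enough $\frak c_N \to \frak c$ gives $B(\frak c_N, \rho_0) \subset B(\frak c, 2\rho_0) \subset \C \setminus \mathcal{K}$, so that by the discussion following \eqref{def:K} a holomorphic representative of $f_N$ exists on this (simply connected) disc. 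The derivatives $f_N^{(k)}$ for $k \geq 1$ are representative-independent, being given by \eqref{fgN1} in terms of $g_N$, which is intrinsically defined through \eqref{gN}.

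Next I would establish that $|f_N^{(5)}|$ is bounded by an $N$-independent constant on $B(\frak c_N, \rho_0)$. By \eqref{fgN1} we have $f_N^{(5)} = g_N^{(4)}$, and differentiating \eqref{gN} four times gives an explicit expression as $-24/z^5$ plus $\tfrac nN$ times an integral of $\lambda^5/(1-z\lambda)^6$ against $\nu_N$. On $B(\frak c, 2\rho_0)$ the quantity $|1 - z\lambda|$ is bounded below uniformly in $\lambda \in \supp(\nu_N)$ and $z$ in the disc (this is exactly what the regularity condition \eqref{RC} and Assumption \ref{ass:nu}(a) buy us, via the set $\mathcal{K}$), while $\lambda$ ranges over a fixed compact subset of $(0,\infty)$ and $n/N$ is bounded; hence $\sup_{z \in B(\frak c_N,\rho_0)} |f_N^{(5)}(z)| \leq \Delta_0$ for some constant $\Delta_0$ independent of $N$. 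I would then apply Taylor's theorem with integral (or Lagrange) remainder to $f_N$ at the base point $\frak c_N$ up to order $4$: since $f_N$ is holomorphic on the disc, for $z \in B(\frak c_N, \rho_0)$,
\[
f_N(z) - \sum_{k=0}^{4} \frac{f_N^{(k)}(\frak c_N)}{k!}(z-\frak c_N)^k = \frac{(z-\frak c_N)^5}{4!}\int_0^1 (1-t)^4 f_N^{(5)}\big(\frak c_N + t(z-\frak c_N)\big)\,\d t,
\]
whose modulus is at most $\tfrac{\Delta_0}{5!}|z-\frak c_N|^5$; set $\Delta = \Delta_0/5!$.

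Finally I would identify the low-order coefficients: $f_N^{(0)}(\frak c_N) = f_N(\frak c_N)$, $f_N^{(1)}(\frak c_N) = g_N(\frak c_N) - \frak a_N = 0$ by \eqref{fgN0} and the definition $\frak a_N = g_N(\frak c_N)$, $f_N^{(2)}(\frak c_N) = g_N'(\frak c_N)$, $f_N^{(3)}(\frak c_N) = g_N''(\frak c_N) = 0$ by Proposition \ref{prop:sequence-cN}, and $f_N^{(4)}(\frak c_N) = g_N^{(3)}(\frak c_N)$. Substituting these into the Taylor sum leaves precisely the quadratic term $\tfrac12 g_N'(\frak c_N)(z-\frak c_N)^2$ and the quartic term $\tfrac1{4!} g_N^{(3)}(\frak c_N)(z-\frak c_N)^4$, giving the first displayed inequality. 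The second (real-part) inequality then follows immediately: taking real parts, the error term's modulus is unchanged, $\re f_N(\frak c_N)$ and $\re(z-\frak c_N)^2$, $\re(z-\frak c_N)^4$ are the real parts of the corresponding holomorphic terms, and since $g_N'(\frak c_N) \in \R$ we indeed get $\re\big(\tfrac12 g_N'(\frak c_N)(z-\frak c_N)^2\big) = \tfrac12 g_N'(\frak c_N)\re(z-\frak c_N)^2$, while $g_N^{(3)}(\frak c_N)$ is also real so the quartic term behaves the same way. The main (minor) obstacle is bookkeeping the $N$-uniformity of the fifth-derivative bound, i.e.\ checking that $\rho_0$ can be chosen once and for all independently of $N$ — this rests entirely on the regularity condition keeping the poles $\lambda_j^{-1}$ away from a fixed neighbourhood of $\frak c$, which is already encoded in $\mathcal{K}$.
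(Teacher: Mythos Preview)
Your proof is correct and follows essentially the same route as the paper's: fix a disc $B(\frak c,2\rho_0)\subset\C\setminus\mathcal K$, use $\frak c_N\to\frak c$ to localize, apply Taylor's theorem with fifth-order remainder, and bound $f_N^{(5)}=g_N^{(4)}$ uniformly in $N$ (the paper invokes the local uniform convergence $g_N^{(4)}\to g^{(4)}$ rather than your explicit formula, but the effect is identical). One harmless slip: differentiating \eqref{gN} four times gives $g_N^{(4)}(z)=24/z^5+(n/N)\int 24\lambda^5/(1-z\lambda)^{5}\,\nu_N(\d\lambda)$, not $-24/z^5$ with a sixth power in the denominator --- this does not affect your bound.
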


\begin{proof}
Choose $\rho_0$ such that $B(\frak c,2\rho_0)\subset \C\setminus\mathcal K$. The convergence $\frak c_N\to \frak c$ then yields  $B(\frak c_N,\rho_0)\subset B(\frak c,2\rho_0)\subset \C\setminus\mathcal
K$ for every $N$ large enough. By using \eqref{fgN1}--\eqref{fgN3} and performing a Taylor expansion for
$f_N$ around $\frak c_N$, we obtain

\begin{eqnarray*}
\lefteqn{\big|f_N(z)-f_N(\frak c_N)-\frac 12 g'_N(\frak c_N)(z-\frak c_N)^2-\frac{1}{4! }g^{(3)}_N(\frak c_N) (z-\frak c_N)^4\big| }\\
 &= & \quad \big|f_N(z)-f_N(\frak c_N)-\frac 12 f''_N(\frak c_N)(z-\frak c_N)^2-\frac{1}{4! }f^{(4)}_N(\frak c_N) (z-\frak c_N)^4\big|\\
&\leq  & \quad \frac{1}{5!}|z-\frak c_N|^5\max_{B(\frak c,2\rho_0)}|f_N^{(5)}|\ ,
\end{eqnarray*}
provided that $z\in B(\frak c_N,\rho_0)$ and $N$ is sufficiently large. 
Moreover, since  $f_N^{(5)}=g_N^{(4)}$ converges uniformly on
$B(\frak c,2\rho_0)$ to $g^{(4)}$ which is bounded there, the existence of
$\Delta=\Delta(\rho_0)$ independent of $N$ follows.
 \end{proof}
  
 From now, we let $\rho>0$ be small enough so that  $0<\rho<\rho_0$ and 
  \eq
 \label{cond rho}
 \frac{g_N^{(3)}(\frak c_N)}{4!}-\Delta \rho \geq 
\frac12\,\frac{g^{(3)}(\frak c)}{4!}
 \qe
%
We introduce the contours
\begin{align}
\label{s cont 1}
\Up_{o}& =\left\{\frak c_N+N^{-1/4}e^{-i\theta}:\quad \theta\in[-\pi/4,\pi/4]\right\}\ ,\\
\Up_{\times} & =\left\{\frak c_N-te^{ i\pi/4}:\quad  t\in[-\rho, -N^{-1/4}]\right\}\cup 
\left\{\frak c_N+te^{- i\pi/4}:\quad  t\in[N^{-1/4},\rho]\right\}
\ , \\
\Um_{o} & =\left\{\frak c_N-N^{-1/4}e^{-i\theta}:\quad \theta\in[-\pi/4,\pi/4]\right\}\ ,\\
\Um_{\times}& =\left\{\frak c_N+te^{i\pi/4}:\quad  t\in[-\rho,-N^{-1/4}]\right\}\cup \left\{\frak c_N+te^{3 i\pi/4}:\quad  t\in[N^{-1/4},\rho]\right\}\ .
\end{align}
The orientations of these contours are specified by letting the parameters $\theta$ and $t$ increase in their definition domains. We also introduce
\begin{align}
\Upsilon_{*}& =\Up_o\cup\Up_\times\cup\Um_o\cup \Um_\times\ ,\\
\label{s cont 2}
\Xi_* & =\left\{\frak c_N+te^{ i\pi/2}:\quad  t\in[-\rho,\rho]\right\}.
\end{align}
Similarly, the orientation of contour $\Xi_*$  is specified by letting $t$ increase in its definition domain.

We now establish the following estimate which essentially allows to replace $f_N$ by its Taylor expansion around $\frak c_N$ in the double integral over the contours $\Upsilon_{*}$ and $\Xi_*$.


\begin{lemma} 
\label{local step 1}
For every $s>0$, the following quantity 
\begin{align} 
\label{loc step formula}
& \frac{N^{1/4}}{(2i\pi)^2\sigma_N}\int_{\Upsilon_*}\d z\int_{\Xi_*} \d w\, \frac{1}{w-z}  \, e^{-N^{1/4}x\frac{(z-\frak c_N)}{\sigma_N}+N^{1/4}y\frac{(w-\frak c_N)}{\sigma_N}}\\
& \times \left\{e^{Nf_N(z)-Nf_N(w)}-e^{N \frac{g_N'(\frak c_N)}2 (z-\frak c_N)^2+N \frac{g_N^{(3)}(\frak c_N)}{4!}(z-\frak c_N)^4-N \frac{g_N'(\frak c_N)}2(w-\frak c_N)^2-N\frac{g_N^{(3)}(\frak c_N)}{4!} (w-\frak c_N)^4}\right\}\nonumber
\end{align}
converges to zero as $N\to \infty$, uniformly in $x,y\in[-s,s]$.
\end{lemma}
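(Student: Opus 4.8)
The plan is to apply the elementary bound $|e^{a}-e^{b}|\le|a-b|\,(e^{\re a}+e^{\re b})$ pointwise on the product contour $\Upsilon_*\times\Xi_*$, with
\[
a=Nf_N(z)-Nf_N(w),\qquad b=Q_N(z)-Q_N(w),\qquad Q_N(\zeta):=\frac N2 g_N'(\frak c_N)(\zeta-\frak c_N)^2+\frac N{4!}g_N^{(3)}(\frak c_N)(\zeta-\frak c_N)^4 ,
\]
so that the modulus of \eqref{loc step formula} is at most $\tfrac1{4\pi^2}$ times the integral against $|\d z|\,|\d w|$ of $\tfrac{N^{1/4}}{\sigma_N}\,\tfrac1{|w-z|}\,\big|e^{-N^{1/4}x(z-\frak c_N)/\sigma_N+N^{1/4}y(w-\frak c_N)/\sigma_N}\big|\,|e^a-e^b|$. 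From Lemma~\ref{Taylor fN}, valid on $B(\frak c_N,\rho)\supset\Upsilon_*\cup\Xi_*$ since $\rho<\rho_0$, I would extract both $|a-b|\le N\Delta\big(|z-\frak c_N|^5+|w-\frak c_N|^5\big)$ and $\re a\le\re b+N\Delta\big(|z-\frak c_N|^5+|w-\frak c_N|^5\big)$, the latter allowing $e^{\re a}$ to be replaced by $e^{\re b}e^{N\Delta(\cdots)}$ throughout, so that the remaining exponent $\re b=\re Q_N(z)-\re Q_N(w)$ splits into a $z$-part and a $w$-part.

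The core step is to control $\re Q_N$ on the two contours. Along every straight piece of $\Up_\times$ and $\Um_\times$ the direction is a fourth root of $-1$, so $(\zeta-\frak c_N)^2$ is purely imaginary while $(\zeta-\frak c_N)^4$ is real and negative; hence $\re Q_N(z)=-\tfrac N{4!}g_N^{(3)}(\frak c_N)|z-\frak c_N|^4$ there, and absorbing the remainder via $N\Delta|z-\frak c_N|^5\le N\Delta\rho\,|z-\frak c_N|^4$ together with \eqref{cond rho} makes the $z$-part of the integrand decay like $e^{-c_0 N|z-\frak c_N|^4}$ with $c_0=\tfrac12 g^{(3)}(\frak c)/4!>0$. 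On the arcs $\Up_o,\Um_o$, where $|z-\frak c_N|=N^{-1/4}$, the relevant exponentials are $O(1)$ because $\sqrt N g_N'(\frak c_N)$ is bounded by \eqref{ass:speed} and $g_N^{(3)}(\frak c_N)\to g^{(3)}(\frak c)$. On $\Xi_*$, writing $w-\frak c_N=it$ gives $-\re Q_N(w)=\tfrac N2 g_N'(\frak c_N)t^2-\tfrac N{4!}g_N^{(3)}(\frak c_N)t^4$; the boundedness of $\sqrt N g_N'(\frak c_N)$ and the elementary inequality $\sup_t\big(C\sqrt N\,t^2-c_0 N t^4\big)=\tfrac{C^2}{4c_0}$, a constant independent of $N$, show the $w$-part of the integrand is bounded above uniformly and decays like $e^{-\tfrac{c_0}2 N t^4}$ once $|t|\ge A_0 N^{-1/4}$ for a suitable $A_0$.

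Next I would handle the remaining factors. The contour $\Upsilon_*$ stays at horizontal distance at least $N^{-1/4}/\sqrt2$ from the vertical line carrying $\Xi_*$ (each of its defining directions has $|\re(\zeta-\frak c_N)|\ge|\zeta-\frak c_N|/\sqrt2$), so $1/|w-z|\le\sqrt2\,N^{1/4}$; and $N^{1/4}/\sigma_N=\bigO{N^{1/4}}$ since $\sigma_N\to(6/g^{(3)}(\frak c))^{1/4}>0$ by Proposition~\ref{prop:sequence-cN}. The exponential prefactor has modulus $1$ on $\Xi_*$ because $\re(it)=0$, and on $\Upsilon_*$ its modulus is $\le e^{Cu}$ after the substitution $u=N^{1/4}|z-\frak c_N|$ (for $x\in[-s,s]$), which is swallowed by the quartic decay $e^{-c_0 u^4}$ on the straight pieces and is $O(1)$ on the arcs. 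Performing $t=N^{-1/4}u$ on all straight pieces (so $|\d\zeta|\mapsto N^{-1/4}\d u$, $|\zeta-\frak c_N|^5\mapsto N^{-5/4}u^5$, $N|\zeta-\frak c_N|^4\mapsto u^4$), the integrand factors into a $z$-integral times a $w$-integral, each reducing to a convergent $\int u^k e^{Cu-c_0u^4}\,\d u$ times an explicit power of $N$: the integrals carrying the extra $|\zeta-\frak c_N|^5$ are $\bigO{N^{-3/2}}$ and their companions are $\bigO{N^{-1/4}}$. Collecting the prefactor $\bigO{N^{1/2}}$ (from $N^{1/4}/\sigma_N$ and $1/|w-z|$) and the factor $N$ from $|a-b|$, the quantity \eqref{loc step formula} is $\bigO{N^{1/2}}\cdot N\cdot\big(\bigO{N^{-3/2}}\bigO{N^{-1/4}}+\bigO{N^{-1/4}}\bigO{N^{-3/2}}\big)=\bigO{N^{-1/4}}$, uniformly in $x,y\in[-s,s]$.

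The main obstacle, I expect, is the bookkeeping around the quadratic term $Ng_N'(\frak c_N)(\zeta-\frak c_N)^2$ on $\Xi_*$: it sits at order $\sqrt N\,t^2$, genuinely larger than the quartic $Nt^4$ at the critical scale $t\asymp N^{-1/4}$, so the decay of the integrand there is not purely quartic and one must combine the boundedness of $\sqrt N g_N'(\frak c_N)$ furnished by the decay assumption \eqref{ass:speed} with the completion-of-squares estimate above to keep $\re b$ under control. Everything else — the contour separation giving $1/|w-z|\lesssim N^{1/4}$, the absorption of the fifth-order remainder into the quartic term through \eqref{cond rho}, the rescaling $u=N^{1/4}|\zeta-\frak c_N|$, and the final power count — is routine once that point is secured.
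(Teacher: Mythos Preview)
Your proposal is correct and follows essentially the same approach as the paper: the elementary exponential inequality (the paper uses $|e^u-e^v|\le|u-v|e^{\re v+|u-v|}$, which leads to the same estimate as your $|e^a-e^b|\le|a-b|(e^{\re a}+e^{\re b})$ once you bound $\re a$ via Lemma~\ref{Taylor fN}), the contour-geometry bound $1/|w-z|\le\sqrt2\,N^{1/4}$, the piecewise analysis of $\re Q_N$ on $\Upsilon^\pm_\times$, $\Upsilon^\pm_o$, $\Xi_*$, the absorption of the fifth-order remainder through \eqref{cond rho}, the rescaling $u=N^{1/4}|\zeta-\frak c_N|$, and the final $\bigO{N^{-1/4}}$ power count all match. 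Your completion-of-squares handling of the quadratic term on $\Xi_*$ is equivalent to the paper's direct change of variables followed by the bound $\int e^{\sqrt N g_N'(\frak c_N)t^2/2-\xi t^4}\,\d t\le C$.
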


\begin{proof}
Let  $s>0$ be fixed, and set for convenience 
\begin{multline*}
D_N(z,w)=e^{Nf_N(z)-Nf_N(w)}\\
-e^{N \frac{g_N'(\frak c_N)}2 (z-\frak c_N)^2+N \frac{g_N^{(3)}(\frak c_N)}{4!}(z-\frak c_N)^4-N \frac{g_N'(\frak c_N)}2(w-\frak c_N)^2-N\frac{g_N^{(3)}(\frak c_N)}{4!} (w-\frak c_N)^4}\ ,
\end{multline*}
so that it amounts to prove that 
\begin{equation}\label{eq:intermediate-referee}
N^{1/4}\sup_{x,y\in[-s,s]}\Bigg|\int_{\Upsilon_*}\d z\int_{\Xi_*} \d w  \, \frac{1}{w-z} \, e^{-N^{1/4}x\frac{(z-\frak c_N)}{\sigma_N}+N^{1/4}y\frac{(w-\frak c_N)}{\sigma_N}}D_N(z,w)\Bigg|
\end{equation}
vanishes as $N\to\infty$.

First, by definition of the contours \eqref{s cont 1}--\eqref{s cont 2}, we have the bound 
$$
\frac{1}{|z-w|} \le  \sqrt{2}\, N^{1/4}
$$ for every $z\in \Upsilon_*$ and $w\in\Xi_*$. As a consequence,
\begin{align} 
\label{A0}
& N^{1/4}\Big|\int_{\Upsilon_*}\d z\int_{\Xi_*} \d w \, \frac{1}{w-z} \, e^{-N^{1/4}x\frac{(z-\frak c_N)}{\sigma_N}+N^{1/4}y\frac{(w-\frak c_N)}{\sigma_N}}D_N(z,w)\Big|\nonumber\\
\leq & \quad \sqrt{2N}\int_{\Upsilon_*}|\d z|\int_{\Xi_*} |\d w|\, e^{-N^{1/4}x\frac{\re(z-\frak c_N)}{\sigma_N}+N^{1/4}y\frac{\re(w-\frak c_N)}{\sigma_N}}\Big|D_N(z,w)\Big|\ .
\end{align}
Next, we use the following elementary inequality, 
\begin{eqnarray}
\label{ineq diff kernels}
 |e^u-e^v|  &=& e^{\re(v)}|e^{(u-v)}-1|\nonumber \\
 & \leq&    e^{\re(v)}\sum_{k\geq 1}\frac{|u-v|^k}{k!}
 \ \leq\   |u-v|e^{\re(v)+|u-v|}\ ,
\end{eqnarray}
which holds true for every $u,v\in\C$. By specializing it to 
\begin{eqnarray*}
u& =&Nf_N(z)-Nf_N(w)=N(f_N(z)-f_N(\frak c_N))-N(f_N(w)-f_N(\frak c_N))\ , \\
v& =&N \frac{g_N'(\frak c_N)}2 (z-\frak c_N)^2+N \frac{g_N^{(3)}(\frak c_N)}{4!} (z-\frak c_N)^4\\
&&  \qquad -N \frac{g_N'(\frak c_N)}2 (w-\frak c_N)^2-N\frac{g_N^{(3)}(\frak c_N)}{4!} (w-\frak c_N)^4 \ ,
\end{eqnarray*}
where $z\in \Upsilon_*$ and $w\in\Xi_*$, so that Lemma \ref{Taylor fN} yields $|u-v|\leq N\Delta(|z-\frak c_N|^5+|w-\frak c_N|^5)$ , we obtain 
\begin{eqnarray*}
\Big|D_N(z,w)\Big| & \leq&  N\Delta(|z-\frak c_N|^5+|w-\frak c_N|^5)\\
& & \quad \times\,  e^{N \frac{g_N'(\frak c_N)}2 \re(z-\frak c_N)^2+N \frac{g_N^{(3)}(\frak c_N)}{4!} \re(z-\frak c_N)^4 + N\Delta|z-\frak c_N|^5}\\
& & \qquad \times\,  e^{-N \frac{g_N'(\frak c_N)}2 \re(w-\frak c_N)^2-N \frac{g_N^{(3)}(\frak c_N)}{4!} \re(w-\frak c_N)^4 + N\Delta|w-\frak c_N|^5}.
\end{eqnarray*}
As a consequence, it follows
\begin{eqnarray}
\lefteqn{
 \Delta^{-1}\int_{\Upsilon_*}|\d z|\int_{\Xi_*} |\d w|\, e^{-N^{1/4}x\frac{\re(z-\frak c_N)}{\sigma_N}+N^{1/4}y\frac{\re(w-\frak c_N)}{\sigma_N}}\Big|D_N(z,w)\Big|}\nonumber \\
    &\leq  &   \int_{\Upsilon_*}N|z-\frak c_N|^5e^{-N^{1/4}x\frac{\re(z-\frak c_N)}{\sigma_N}+N\frac{g_N'(\frak c_N)}2 \re(z-\frak c_N)^2
    +N\frac{g_N^{(3)}(\frak c_N)}{4!}  \re(z-\frak c_N)^4+N\Delta|z-\frak c_N|^5}|\d z|\nonumber\\
 && \qquad \times \int_{\Ttilde_*}e^{N^{1/4}y\frac{\re(w-\frak c_N)}{\sigma_N}-N\frac{g_N'(\frak c_N)}2 \re(w-\frak c_N)^2-N\frac{g_N^{(3)}(\frak c_N)}{4!} 
 \re(w-\frak c_N)^4+N\Delta|w-\frak c_N|^5}|\d w|\nonumber\\
    && \quad  +  \int_{\Upsilon_*}e^{-N^{1/4}x\frac{\re(z-\frak c_N)}{\sigma_N}+N\frac{g_N'(\frak c_N)}2 \re(z-\frak c_N)^2+N\frac{g_N^{(3)}(\frak c_N)}{4!} 
    \re(z-\frak c_N)^4+N\Delta|z-\frak c_N|^5}|\d z|\nonumber\\
 && \qquad \times \int_{\Ttilde_*}N|w-\frak c_N|^5e^{N^{1/4}y\frac{\re(w-\frak c_N)}{\sigma_N}-N\frac{g_N'(\frak c_N)}2 \re(w-\frak c_N)^2-N\frac{g_N^{(3)}(\frak c_N)}{4!} \re(w-\frak c_N)^4+N\Delta|w-\frak c_N|^5}|\d w|\nonumber.\\
 \label{A1}
\end{eqnarray}

We now handle the integrals over each piece of contour separately.

First, consider the integrals over the contour $\Xi_*$. By definition $w\in\Xi_*$ if and only if there exists $t\in[-\rho,\rho]$ such that $w=\frak c_N+it$. Thus,
\[
\re (w-\frak c_N)=0,\qquad \re (w-\frak c_N)^2=-t^2,\qquad \re (w-\frak c_N)^4=t^4,\qquad |w-\frak c_N|^5=|t|^5.
\]
For $N$ large enough this yields, together with \eqref{cond rho},
%
\begin{eqnarray}
\label{A3}
\lefteqn{ \int_{\Ttilde_*}e^{N^{1/4}y\frac{\re(w-\frak c_N)}{\sigma_N}-N\frac{g_N'(\frak c_N)}2 \re(w-\frak c_N)^2-N\frac{g_N^{(3)}(\frak c_N)}{4!} \re(w-\frak c_N)^4
+N\Delta|w-\frak c_N|^5}|\d w|}\nonumber\\
& =&  \int_{-\rho}^\rho e^{N\frac{g_N'(\frak c_N)}2 t^2-N\frac{g_N^{(3)}(\frak c_N)}{4!} t^4+N\Delta |t|^5}\d t\nonumber\\
& \leq &\int_{-\rho}^\rho e^{N\frac{g_N'(\frak c_N)}2 t^2-Nt^4\left(\frac{g_N^{(3)}(\frak c_N)}{4!}-\rho\Delta\right)}\d t\nonumber\\
& \leq&\int_{-\rho}^\rho e^{N\frac{g_N'(\frak c_N)}2 t^2-N\xi t^4}\d t\nonumber\\
& = &\frac{1}{N^{1/4}}\int_{-\rho N^{1/4}}^{\rho N^{1/4}}e^{\sqrt N\frac{g_N'(\frak c_N)}2 t^2-\xi t^4}\d t\nonumber\\
& \leq &\frac{1}{N^{1/4}}\int_{-\infty}^{+\infty}e^{\sqrt N\frac{g_N'(\frak c_N)}2 t^2- \xi t^4}\d t\quad \leq\quad \frac{C}{N^{1/4}}
\end{eqnarray}
where we set for convenience 
$$
\xi \ =\ \frac 12\, \frac{g^{(3)}(\frak c)}{4!}\, >\ 0\, .
$$ 
For the last estimate, we have used the decay 
assumption $ \sqrt N\,g_N'(\frak c_N) \to \kappa\in\R$. Similarly,
\begin{align}
&\int_{\Ttilde_*}N|w-\frak c_N|^5e^{N^{1/4}y\frac{\re(w-\frak c_N)}{\sigma_N}-N\frac{g_N'(\frak c_N)}2 \re(w-\frak c_N)^2-N\frac{g_N^{(3)}(\frak c_N) }{4!}
\re(w-\frak c_N)^4+N\Delta|w-\frak c_N|^5}|\d w|\nonumber\\
& \quad = \quad  N\int_{-\rho}^\rho |t|^5 e^{N\frac{g_N'(\frak c_N)}2 t^2-N\frac{g_N^{(3)}(\frak c_N)}{4!} t^4+N\Delta | t|^5}\d t\nonumber\\
& \quad \leq \quad  N \int_{-\rho}^\rho |t|^5e^{N\frac{g_N'(\frak c_N)}2 t^2-N\xi t^4}\d t\nonumber\\
& \quad = \quad \frac{1}{\sqrt N}\int_{-\rho N^{1/4}}^{\rho N^{1/4}}|t|^5e^{\sqrt N\frac{g_N'(\frak c_N)}2 t^2-\xi t^4}\d t \quad \leq\quad \frac{C}{\sqrt N}\ .
\end{align}

Next, we turn to the integrals over the contours $\Up_{\times}$ and $\Um_{\times}$. By definition $z\in\Up_{\times}$, resp. $z\in\Um_{\times}$, if and only if there exists $t\in[N^{-1/4},\rho]$ such that $z=\frak c_N+e^{\pm i\pi/4}t$, resp. $z=\frak c_N-e^{\pm i\pi/4}t$, so that in both cases we have
\[
|\re (z-\frak c_N)|\leq t,\qquad \re (z-\frak c_N)^2=0,\qquad 
\re (z-\frak c_N)^4=-t^4,\qquad |z-\frak c_N|^5= t^5.
\]
As a consequence, for every $x\in[-s,s]$,
\begin{align}
\label{A4}
& \int_{\Up_{\times}\cup \Um_{\times}}e^{-N^{1/4}x\frac{\re(z-\frak c_N)}{\sigma_N}+N\frac{g_N'(\frak c_N)}2 \re(z-\frak c_N)^2+N\frac{g_N^{(3)}(\frak c_N)}{4!} 
\re(z-\frak c_N)^4+N\Delta|z-\frak c_N|^5}|\d z| \nonumber\\
& \quad \leq \quad 4\int_{N^{-1/4}}^\rho e^{N^{1/4}\frac{st}{\sigma_N}-N\frac{g_N^{(3)}(\frak c_N)}{4!} t^4+N\Delta |t|^5}\d t\nonumber\\
& \quad \leq \quad 4\int_{N^{-1/4}}^\rho e^{N^{1/4}\frac{st}{\sigma_N}-N\xi t^4}\d t\nonumber\\
& \quad = \quad\frac{4}{N^{1/4}}\int_{1}^{\rho N^{1/4}}e^{\frac{st}{\sigma_N}-\xi t^4}\d t \quad \leq\quad \frac{C}{N^{1/4}} 
\end{align}
where the last inequality follows from the fact that $\liminf_N \sigma_N > 0$. Similarly,
\begin{align}
& \int_{\Up_{\times}\cup \Um_{\times}}N|z-\frak c_N|^5e^{-N^{1/4}x\frac{\re(z-\frak c_N)}{\sigma_N}+N\frac{g_N'(\frak c_N)}2 \re(z-\frak c_N)^2+N\frac{g_N^{(3)}(\frak c_N)}{4!} \re(z-\frak c_N)^4+N\Delta|z-\frak c_N|^5}|\d z|\nonumber\\
& \quad \leq  \quad 4N\int_{N^{-1/4}}^\rho t^5 e^{N^{1/4}\frac{st}{\sigma_N}-N\frac{g_N^{(3)}(\frak c_N)}{4!} t^4+N\Delta |t|^5}\d t\nonumber\\
& \quad \leq \quad 4N \int_{N^{-1/4}}^\rho t^5e^{N^{1/4}\frac{st}{\sigma_N}-N\xi t^4}\d t\nonumber\\
& \quad = \quad\frac{4}{\sqrt N}\int_{1}^{\rho N^{1/4}}t^5e^{\frac{st}{\sigma_N}-\xi t^4}\d t\quad \leq\quad \frac{C}{\sqrt N}\ . 
\end{align}
Finally, we consider the integrals over the contours $\Up_o$ and $\Um_o$. By definition $z\in\Up_o\cup\Um_o$ if and only if there exists $\theta\in[-\pi/4,\pi/4]$ such that $z=\frak c_N\pm N^{-1/4}e^{ i\theta}$, and hence
\[
|\re (z-\frak c_N)|\leq N^{-1/4},\qquad |\re (z-\frak c_N)^2|\leq N^{-1/2},\qquad |\re (z-\frak c_N)^4|\leq N^{-1},\qquad |z-\frak c_N|^5\leq N^{-5/4}.
\]
As a consequence, for every $x\in[-s,s]$,
\begin{align}
\label{A5}
& \int_{\Up_o\cup \Um_o}e^{-N^{1/4}x\frac{\re(z-\frak c_N)}{\sigma_N}+N\frac{g_N'(\frak c_N)}2 \re(z-\frak c_N)^2+N\frac{g_N^{(3)}(\frak c_N)}{4!} \re(z-\frak c_N)^4+N\Delta|z-\frak c_N|^5}|\d z|\nonumber\\
& \quad \leq \quad  e^{\frac{s}{\sigma_N}+\sqrt N g_N'(\frak c_N)+g_N^{(3)}(\frak c_N)+\frac{\Delta}{N^{1/4}}} \int_{\Up_o\cup \Um_o}|\d z|\nonumber\\
& \quad = \quad \frac{\pi}{N^{1/4}} \, e^{\frac{s}{\sigma_N}+\sqrt N g_N'(\frak c_N)+g_N^{(3)}(\frak c_N)+\frac{\Delta}{N^{1/4}}} \quad \leq\quad \frac{C}{N^{1/4}}\ ,
\end{align}
where for the last estimate, we used the fact that  $\liminf_N \sigma_N > 0$, 
the decay assumption $\sqrt N\, g_N'(\frak c_N)\to \kappa\in\R$ and the 
convergence $g_N^{(3)}(\frak c_N)\to g^{(3)}(\frak c)$. Similarly,
\begin{align}
\label{A2}
&  \int_{\Up_o\cup \Um_o}N|z-\frak c_N|^5e^{-N^{1/4}x\frac{\re(z-\frak c_N)}{\sigma_N}+N\frac{g_N'(\frak c_N)}2 \re(z-\frak c_N)^2
+N\frac{g_N^{(3)}(\frak c_N)}{4!} \re(z-\frak c_N)^4+N\Delta|z-\frak c_N|^5}|\d z| \nonumber\\
& \quad \leq  \quad \frac{\pi}{\sqrt N} \, e^{\frac{s}{\sigma_N}+\sqrt N g_N'(\frak c_N)+g_N^{(3)}(\frak c_N)+\frac{\Delta}{N^{1/4}}} \quad \leq\quad \frac{C}{\sqrt N}\ .
\end{align}
By gathering \eqref{A1}--\eqref{A2}, we thus obtain the estimate
\[
\Delta^{-1}\int_{\Upsilon_*}|\d z|\int_{\Xi_*} |\d w|\, e^{-N^{1/4}x\frac{\re(z-\frak c_N)}{\sigma_N}+N^{1/4}y\frac{\re(w-\frak c_N)}{\sigma_N}}\Big|D_N(z,w)\Big|\leq \frac C{N^{3/4}}.
\]
Combined together with \eqref{A0}, this finally yields
\[
 N^{1/4}\sup_{x,y\in [-s,s]}\Bigg|\int_{\Upsilon_*}\d z\int_{\Xi_*} \d w \, \frac{1}{w-z} \, e^{-N^{1/4}x\frac{(z-\frak c_N)}{\sigma_N}+N^{1/4}y\frac{(w-\frak c_N)}{\sigma_N}}D_N(z,w)\Bigg|\leq \frac C{N^{1/4}}\ .
\] 
Hence, \eqref{eq:intermediate-referee} is proved, which in turn implies \eqref{loc step formula}. Proof of Lemma \ref{local step 1} is therefore complete.
\end{proof}

The next estimate completes the previous lemma by showing one can replace  the constant $Ng'_N(\frak c_N)$ in  \eqref{loc step formula}  by 
$-\sqrt N\tau/\sigma_N^2$, and that the resulting kernel is the Pearcey kernel, up to a negligible correction term. 

\begin{lemma} 
\label{local step 2}
For every $s>0$, 
\begin{multline*} 
\frac{N^{1/4}}{(2i\pi)^2\sigma_N}\int_{\Upsilon_*}\d z\int_{\Xi_*} \d w \, \frac{1}{w-z}\, e^{-N^{1/4}x\frac{(z-\frak c_N)}{\sigma_N} +N^{1/4}y\frac{(w-\frak c_N)}{\sigma_N}}\\
\times e^{N \frac{g_N'(\frak c_N)}2 (z-\frak c_N)^2+N \frac{g_N^{(3)}(\frak c_N)}{4!} (z-\frak c_N)^4-N \frac{g_N'(\frak c_N)}2(w-\frak c_N)^2-N\frac{g_N^{(3)}(\frak c_N)}{4!}(w-\frak c_N)^4}
\end{multline*}
converges as $N\to\infty$ towards $\K_\Pe^{(\tau)}(x,y)$, uniformly in $x,y\in[-s,s]$.
\end{lemma}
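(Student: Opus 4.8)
The plan is to rescale the contours so that the integral of Lemma~\ref{local step 2} becomes, up to vanishing corrections, the Pearcey double-contour representation \eqref{Pearcey cont}. First I would perform the change of variables $z=\frak c_N+\sigma_N N^{-1/4}u$, $w=\frak c_N+\sigma_N N^{-1/4}v$. Since $\sigma_N=(6/g_N^{(3)}(\frak c_N))^{1/4}$, one has the exact identity $\frac{g_N^{(3)}(\frak c_N)}{4!}\sigma_N^4=\frac14$, so the quartic terms become $\frac{u^4}{4}$ and $-\frac{v^4}{4}$; the linear terms become $-xu$ and $yv$; the quadratic terms become $b_N u^2$ and $-b_N v^2$ with $b_N=\tfrac12\sqrt N\,g_N'(\frak c_N)\,\sigma_N^2$; and the prefactor together with the Jacobian collapses to $\frac1{(2i\pi)^2}$. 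The integral thus equals
\[
\frac1{(2i\pi)^2}\int_{\Upsilon_N}\d u\int_{\Xi_N}\d v\,\frac{1}{v-u}\,e^{-xu+b_N u^2+\frac{u^4}4}\,e^{yv-b_Nv^2-\frac{v^4}4}\ ,
\]
where $\Xi_N=\{it:|t|\le R_N\}$ with $R_N=\rho N^{1/4}/\sigma_N\to\infty$, and $\Upsilon_N=\frac{N^{1/4}}{\sigma_N}(\Upsilon_*-\frak c_N)$ consists, by \eqref{s cont 1}, of two arcs of the circle of radius $1/\sigma_N$ prolonged by four segments of the rays $re^{\pm i\pi/4}$, $re^{\pm3i\pi/4}$ out to radius $R_N$, with the same orientation as the contour $\Sigma$ of \eqref{Sigma}.

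Next I would pass to the limit. By the decay assumption \eqref{ass:speed} and Proposition~\ref{prop:sequence-cN} one has $b_N\to\tfrac12\kappa(6/g^{(3)}(\frak c))^{1/2}=-\tau/2$ and $1/\sigma_N\to r_\infty:=(g^{(3)}(\frak c)/6)^{1/4}>0$. Let $\widehat\Upsilon_N$ be the infinite contour obtained from $\Upsilon_N$ by prolonging its four rays to infinity; it is homotopic to $\Sigma$ in $\C\setminus i\R$ and satisfies $\dist(\widehat\Upsilon_N,i\R)=1/(\sqrt2\,\sigma_N)\to r_\infty/\sqrt2>0$. Since for each fixed $v\in i\R$ the map $u\mapsto e^{-xu-\frac\tau2 u^2+\frac{u^4}4}/(v-u)$ is holomorphic off $u=v$ and decays super-exponentially along the rays, the representation \eqref{Pearcey cont} is valid with $\widehat\Upsilon_N$ in place of $\Sigma$; combined with the symmetry $\Kpe(x,y)=\Kpe(-x,-y)$ noted below \eqref{Pearcey cont}, this gives
\[
\frac1{(2i\pi)^2}\int_{\widehat\Upsilon_N}\d u\int_{-i\infty}^{i\infty}\d v\,\frac{e^{-xu-\frac\tau2 u^2+\frac{u^4}4}\,e^{yv+\frac\tau2 v^2-\frac{v^4}4}}{v-u}=\Kpe(-x,-y)=\Kpe(x,y)\ .
\]
It then remains to show that both the replacement of $b_N$ by $-\tau/2$ and the replacement of $(\Upsilon_N,\Xi_N)$ by $(\widehat\Upsilon_N,i\R)$ produce errors tending to $0$, uniformly for $x,y\in[-s,s]$.

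This I would do by dominated convergence. On the rays of $\widehat\Upsilon_N$ one has $u^4=-|u|^4$ and $\re(u^2)=0$, so $|e^{b_N u^2+u^4/4}|\le e^{C|u|^2-|u|^4/4}$ uniformly in $N$ (since $b_N$ is bounded), while on the arcs $|u|$ stays bounded; likewise $|e^{-b_N v^2-v^4/4}|\le e^{C|v|^2-|v|^4/4}$ for $v\in i\R$, and $|e^{-xu+yv}|\le e^{s(|u|+|v|)}$ for $x,y\in[-s,s]$. Crucially $1/|v-u|$ is bounded uniformly in $N$, $u\in\widehat\Upsilon_N$, $v\in i\R$ by the distance estimate above (which is precisely why \eqref{Pearcey cont} is well-defined), and on the ray pieces beyond radius $R_N$ one even has $1/|v-u|\le\sqrt2/|u|\le\sqrt2/R_N$. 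Hence on the unbounded parts the integrands are dominated by an integrable function of the form $C\,e^{s(|u|+|v|)+C(|u|^2+|v|^2)-(|u|^4+|v|^4)/4}$, with constants independent of $x,y$ and $N$; since $b_N\to-\tau/2$ the integrand converges pointwise, and since $R_N\to\infty$ the contributions of $\widehat\Upsilon_N\setminus\Upsilon_N$ and $i\R\setminus\Xi_N$ vanish. Dominated convergence then yields the asserted uniform convergence to $\Kpe(x,y)$.

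The main technical point is the uniform control of the factor $1/(v-u)$: one must verify once and for all that the rescaled and completed contour $\widehat\Upsilon_N$ stays at a distance from $i\R$ bounded below uniformly in $N$, so that no pole is ever approached and the double integrals remain absolutely convergent with $N$-independent bounds. Everything else — the algebraic collapse of the constants, the homotopy invariance allowing $\Sigma$ to be replaced by $\widehat\Upsilon_N$, and the integral estimates — is routine thanks to the super-exponential decay furnished by the quartic terms.
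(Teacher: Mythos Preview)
Your argument is correct and takes a genuinely different route from the paper's.

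The paper works in the original $(z,w)$ variables: it first truncates the Pearcey double integral \eqref{Pearcey cont} to the bounded region $|z|,|w|\le \rho N^{1/4}/\sigma_N$ (showing the tails are $O(N^{-1/4})$), then performs the change of variables $z\mapsto N^{1/4}(z-\frak c_N)/\sigma_N$ backwards to land on $\Upsilon_*\times\Xi_*$ with quadratic coefficient $-\sqrt N\,\tau/(2\sigma_N^2)$, and finally replaces this coefficient by $N g_N'(\frak c_N)/2$ using the elementary inequality $|e^u-e^v|\le |u-v|\,e^{\re v+|u-v|}$ and the same piecewise contour estimates already developed for Lemma~\ref{local step 1}. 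This yields explicit error terms $O(N^{-1/4})$ and $O(\Delta_N)$ with $\Delta_N=\tfrac12\sqrt N g_N'(\frak c_N)+\tau/(2\sigma_N^2)\to0$.

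You instead rescale forward to $(u,v)$, extend the finite contours to the infinite contour $\widehat\Upsilon_N\times i\R$, deform $\Sigma$ to $\widehat\Upsilon_N$ (valid since both components lie in the open half-planes $\re u\gtrless0$ and share the same asymptotic rays), invoke the symmetry $\Kpe(x,y)=\Kpe(-x,-y)$ to reconcile the signs, and then conclude by dominated convergence using the quartic decay and the uniform lower bound $\dist(\widehat\Upsilon_N,i\R)=1/(\sqrt2\,\sigma_N)\ge c>0$. This is cleaner and avoids the repeated piecewise estimates, at the cost of not producing an explicit rate of convergence.

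One small point worth tightening: your dominated-convergence step is run over the $N$-dependent domain $\widehat\Upsilon_N\times i\R$ (the arc radii are $1/\sigma_N$). Strictly speaking one should parametrize these contours by a fixed parameter space (e.g.\ split into arc and ray pieces with parameters $\theta\in[-\pi/4,\pi/4]$ and $t\in[1/\sigma_N,\infty)$, then note $1/\sigma_N\to r_\infty>0$) so that the integrand and Jacobian converge pointwise with an $N$-independent integrable majorant. This is entirely routine and does not affect the validity of your argument.
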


\begin{proof} Let $s>0$ be fixed and recall the definition \eqref{Pearcey cont} of $\K_\Pe^{(\tau)}(x,y)$. We first show that, uniformly in $x,y\in[-s,s]$, as $N\to\infty$,
\begin{multline}
\label{P trunc}
\K_\Pe^{(\tau)}(x,y)=\int_{\Sigma\,\cap\,\left\{|z|\leq  \frac{ N^{1/4}\rho}{\sigma_N}\right\}} \d z \int_{-iN^{1/4}\rho/\sigma_N}^{\,iN^{1/4}\rho/\sigma_N}\d w \, \frac{1}{w-z} \, e^{-xz-\frac{\tau}2 z^2+\frac{z^4}4+yw+\frac{\tau}2 w^2-\frac{w^4}4} \\+\bigO{\frac 1{N^{1/4}}}\ .
\end{multline}
Indeed, keeping in mind that the sequence $(\sigma_N)$ is bounded, it follows from the definition~\eqref{Sigma} of $\Sigma$ together with  simple estimates 
that for every $x,y\in[-s,s]$,
\begin{multline*}
 \Bigg|\int_{\Sigma\,\cap\,\left\{|z|\geq  \frac{N^{1/4}\rho}{\sigma_N}\right\}} \d z \int_{-i\infty}^{\,i\infty}\d w \, \frac{1}{w-z} \, e^{-xz-\frac{\tau}2 z^2+\frac{z^4}4+yw+\frac{\tau}2 w^2-\frac{w^4}4}\Bigg|\\
\leq  \quad  \frac{\sqrt 2\,\sigma_N}{N^{1/4}\rho} \,4\int_{N^{1/4}\rho/\sigma_N}^{+\infty}e^{s|t|-\frac{t^4}4}\d t\int_{-\infty}^{+\infty}e^{-\frac{\tau}2 u^2-\frac{u^4}4}\d u \quad \leq \frac{C}{N^{1/4}}\ ,
\end{multline*}
and 
\begin{multline*}
 \Bigg|\int_{\Sigma} \d z \left(\int_{-i\infty}^{-iN^{1/4}\rho/\sigma_N} + \int_{iN^{1/4}\rho/\sigma_N}^{i\infty}\right)\d w \, \frac{1}{w-z} \, e^{-xz-\frac{\tau}2 z^2+\frac{z^4}4+yw+ \frac{\tau}2 w^2-\frac{w^4}4}\Bigg|\\
\leq  \quad  \frac{\sqrt 2\,\sigma_N}{N^{1/4}\rho}\left(4\int_{1}^{+\infty}e^{s|t|-\frac{t^4}4}\d t+\frac1\pi e^{s+\tau+\frac 14}\right) 2\int_{N^{1/4}\rho/\sigma_N}^{+\infty}e^{-\frac{\tau}2 u^2-\frac{u^4}4}\d u\quad
 \leq  \quad  \frac{C}{N^{1/4}}\ ,
\end{multline*}
from which \eqref{P trunc} follows.

As a consequence, by performing the changes of variables $z\mapsto N^{1/4}(z-\frak c_N)/\sigma_N$ and $w\mapsto N^{1/4}(w-\frak c_N)/\sigma_N$ in the right hand side of \eqref{P trunc} and using the definition \eqref{def:constants} of $\sigma_N$, we obtain
\begin{multline}
\label{KP 1}
\K_\Pe^{(\tau)}(x,y)=\frac{N^{1/4}}{(2i\pi)^2\sigma_N}\int_{\Upsilon_*}\d z \int_{\Xi_*}\d w\, \frac{1}{w-z}\, e^{-N^{1/4}x\frac{(z-\frak c_N)}{\sigma_N}+N^{1/4}y\frac{(w-\frak c_N)}{\sigma_N}}\\
\times \Big(e^{-\sqrt N \tau \frac{(z-\frak c_N)^2}{2 \sigma_N^2} +N \frac{g_N^{(3)}(\frak c_N)}{4!}(z-\frak c_N)^4+ \sqrt N\tau  \frac{(w-\frak c_N)^2}{2\sigma_N^2}-N\frac{g_N^{(3)}(\frak c_N)}{4!}(w-\frak c_N)^4}\Big) +\bigO{\frac 1{N^{1/4}}}\ .
\end{multline}
In order to complete the proof of the proposition, we now prove the estimate \eqref{KP 1}  still holds true after replacement of the constant $-\sqrt N\tau/\sigma_N^2$  by $Ng'_N(\frak c_N)$. This amounts to showing that 
\begin{multline}
\label{KP 2} 
N^{1/4}\Bigg|\int_{\Upsilon_*}\d z\int_{\Xi_*} \d w\, \frac{1}{w-z} \, e^{-N^{1/4}x\frac{(z-\frak c_N)}{\sigma_N}+N \frac{g_N^{(3)}(\frak c_N)}{4!}(z-\frak c_N)^4
+N^{1/4}y\frac{(w-\frak c_N)}{\sigma_N}-N\frac{g_N^{(3)}(\frak c_N)}{4!} (w-\frak c_N)^4}\\
\times \left(e^{-\sqrt N \tau \frac{(z-\frak c_N)^2}{2\sigma_N^2} +\sqrt N \tau \frac{(w-\frak c_N)^2}{2\sigma_N^2} }-e^{  N g_N'(\frak c_N)\frac{(z-\frak c_N)^2}2- 
N g_N'(\frak c_N)\frac{(w-\frak c_N)^2}2}\right)\Bigg|
\end{multline}
converges to zero uniformly in $x,y\in[-s,s]$, which we now establish by using the same type of arguments as in the proof of Lemma~\ref{local step 1}.
To do so, we set 
\[
\Delta_N= -\frac\tau{2\sigma_N^2}- \sqrt N \frac{g_N'(\frak c_N)}2=\frac 12\left[\left(\frac{g_N^{(3)}(\frak c_N)}{g^{(3)}(\frak c)}\right)^{1/2} \kappa-\sqrt N\, g_N'(\frak c_N)\right]
\]
and observe that, since we have the convergences $g_N^{(3)}(\frak c_N)\to g^{(3)}(\frak c)$ and $\sqrt N\, g_N'(\frak c_N)\to\kappa$, $\Delta_N=o(1)$ as $N\to\infty$. First, since $|z-w|^{-1} \leq \sqrt{2}  N^{1/4}$ for every $z\in\Upsilon_*$ and $w\in\Xi_*$, \eqref{KP 2} is bounded from above by
\begin{multline}
\label{KP 3} 
\sqrt{2 N} \int_{\Upsilon_*}|\d z|\int_{\Xi_*} |\d w|\, e^{-N^{1/4}x\frac{\re(z-\frak c_N)}{\sigma_N}+N \frac{g_N^{(3)}(\frak c_N)}{4!} \re(z-\frak c_N)^4+N^{1/4}y\frac{\re(w-\frak c_N)}{\sigma_N}-N\frac{g_N^{(3)}(\frak c_N)}{4!}\re(w-\frak c_N)^4}\\
\times \Bigg|e^{-\sqrt N \tau \frac{(z-\frak c_N)^2}{2\sigma_N^2} +\sqrt N \tau \frac{(w-\frak c_N)^2}{2\sigma_N^2}}-e^{ N \frac{g_N'(\frak c_N)}2 (z-\frak c_N)^2
-N \frac{g_N'(\frak c_N)}2(w-\frak c_N)^2}\Bigg|.
\end{multline}
Next, we use inequality \eqref{ineq diff kernels} with 
\begin{eqnarray*}
u& =&- \sqrt N \tau \frac{(z-\frak c_N)^2}{2\sigma_N^2}+ \sqrt N \tau \frac{(w-\frak c_N)^2}{2\sigma_N^2}\ ,\\
 v  & =& N \frac{g_N'(\frak c_N)}2 (z-\frak c_N)^2-N \frac{g_N'(\frak c_N)}2 (w-\frak c_N)^2 \ ,
\end{eqnarray*}
so that $|u-v|\leq \sqrt N\Delta_N (|z-\frak c_N|^2+|w-\frak c_N|^2)$, in order to obtain  that \eqref{KP 3} is bounded from above by

\begin{align}
\label{}
 & \;  \int_{\Upsilon_*}  N \Delta_N|z-\frak c_N|^2e^{-N^{1/4}x\frac{\re(z-\frak c_N)}{\sigma_N}+N\frac{g_N'(\frak c_N)}2 \re(z-\frak c_N)^2 
 + \sqrt N\Delta_N|z-\frak c_N|^2+N\frac{g_N^{(3)}(\frak c_N) }{4!} \re(z-\frak c_N)^4}|\d z|\nonumber\\
 & \quad \times \int_{\Ttilde_*}e^{N^{1/4}y\frac{\re(w-\frak c_N)}{\sigma_N}-N\frac{g_N'(\frak c_N)}2 \re(w-\frak c_N)^2+\sqrt N\Delta_N|w-\frak c_N|^2-N\frac{g_N^{(3)}(\frak c_N)}{4!} \re(w-\frak c_N)^4}|\d w|\nonumber\\
    & \; +\,  \int_{\Upsilon_*}e^{-N^{1/4}x\frac{\re(z-\frak c_N)}{\sigma_N}+N\frac{g_N'(\frak c_N)}2 \re(z-\frak c_N)^2+\sqrt N\Delta_N|z-\frak c_N|^2+N\frac{g_N^{(3)}(\frak c_N)}{4!}  \re(z-\frak c_N)^4}|\d z|\nonumber\\
 & \quad \times \int_{\Ttilde_*}N\Delta_N|w-\frak c_N|^2e^{N^{1/4}y\frac{\re(w-\frak c_N)}{\sigma_N}-N\frac{g_N'(\frak c_N)}2 \re(w-\frak c_N)^2
 +\sqrt N\Delta_N|w-\frak c_N|^2-N\frac{g_N^{(3)}(\frak c_N)}{4!} \re(w-\frak c_N)^4}|\d w|\nonumber.
\end{align}
By performing essentially the same estimates as in \eqref{A3}--\eqref{A2}, we then prove that \eqref{KP 3} is a $\bigO{\Delta_N}$ as $N\to\infty$ uniformly in $x,y\in[-s,s]$. This completes the proof of Lemma~\ref{local step 2}.
\end{proof}

We have now completed the local analysis around $\frak c_N$. More precisely, by gathering Lemmas \ref{local step 1} and \ref{local step 2}, we have established the following result. 

\begin{proposition} 
\label{concl Section 1}
For every $s>0$, 
\[
\frac{N^{1/4}}{(2i\pi)^2\sigma_N}\int_{\Upsilon_*}\d z\int_{\Xi_*} \d w  \, \frac{1}{w-z} \, e^{-N^{1/4}x\frac{(z-\frak c_N)}{\sigma_N}+N^{1/4}y\frac{(w-\frak c_N)}{\sigma_N}+Nf_N(z)-Nf_N(w)}
\]
converges towards $\K_\Pe^{(\tau)}(x,y)$ uniformly in $x,y\in[-s,s]$ as $N\to\infty$.

\end{proposition}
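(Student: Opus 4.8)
The plan is simply to combine Lemmas \ref{local step 1} and \ref{local step 2} through a triangle inequality; all the genuine analytic work has already been carried out in those two lemmas. Denote by
\[
A_N(x,y) = \frac{N^{1/4}}{(2i\pi)^2\sigma_N}\int_{\Upsilon_*}\d z\int_{\Xi_*} \d w  \, \frac{1}{w-z} \, e^{-N^{1/4}x\frac{(z-\frak c_N)}{\sigma_N}+N^{1/4}y\frac{(w-\frak c_N)}{\sigma_N}+Nf_N(z)-Nf_N(w)}
\]
the quantity in the statement, and by $B_N(x,y)$ the same double integral in which the exponent $Nf_N(z)-Nf_N(w)$ is replaced by its quartic Taylor surrogate
\[
N \tfrac{g_N'(\frak c_N)}2 (z-\frak c_N)^2+N \tfrac{g_N^{(3)}(\frak c_N)}{4!} (z-\frak c_N)^4-N \tfrac{g_N'(\frak c_N)}2(w-\frak c_N)^2-N\tfrac{g_N^{(3)}(\frak c_N)}{4!}(w-\frak c_N)^4 ,
\]
that is, $B_N(x,y)$ is precisely the kernel whose convergence to $\K_\Pe^{(\tau)}(x,y)$ is asserted in Lemma \ref{local step 2}. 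With these notations, $A_N(x,y)-B_N(x,y)$ is exactly the expression \eqref{loc step formula} appearing in Lemma \ref{local step 1}, since the prefactor $N^{1/4}/((2i\pi)^2\sigma_N)$, the contours $\Upsilon_*$ and $\Xi_*$, the factor $1/(w-z)$ and the linear terms $\pm N^{1/4}(\cdot-\frak c_N)/\sigma_N$ in the exponent are identical in $A_N$ and $B_N$.

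First I would fix $s>0$ and, for $x,y\in[-s,s]$, write
\[
\big|A_N(x,y) - \K_\Pe^{(\tau)}(x,y)\big| \le \big|A_N(x,y) - B_N(x,y)\big| + \big|B_N(x,y) - \K_\Pe^{(\tau)}(x,y)\big| .
\]
Taking the supremum over $x,y\in[-s,s]$, the first term on the right-hand side tends to $0$ as $N\to\infty$ by Lemma \ref{local step 1}, and the second tends to $0$ by Lemma \ref{local step 2}. Hence $\sup_{x,y\in[-s,s]}|A_N(x,y)-\K_\Pe^{(\tau)}(x,y)|\to 0$, which is the claimed uniform convergence; since $s>0$ was arbitrary, the proposition follows.

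As for the main obstacle: there is essentially none left at this stage. The analytically demanding parts — the quantitative fifth-order Taylor control of $f_N$ on $B(\frak c_N,\rho_0)$ (Lemma \ref{Taylor fN}), the contour-by-contour Gaussian–quartic bounds that make the replacement of $f_N$ by its quartic Taylor polynomial negligible, the substitution of $Ng_N'(\frak c_N)$ by $-\sqrt N\,\tau/\sigma_N^2$ using the decay assumption \eqref{ass:speed}, and the truncation of the Pearcey contours at radius $N^{1/4}\rho/\sigma_N$ — are all subsumed in Lemmas \ref{local step 1} and \ref{local step 2}. The only point requiring care is the bookkeeping: one must check that the prefactors, the contours, and the linear exponential terms match verbatim across $A_N$, $B_N$, \eqref{loc step formula} and the statement of Lemma \ref{local step 2}, so that the triangle inequality applies directly, with no change of variables or further estimate needed.
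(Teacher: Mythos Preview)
Your proposal is correct and matches the paper's approach exactly: the paper simply states that Proposition~\ref{concl Section 1} follows by ``gathering Lemmas~\ref{local step 1} and~\ref{local step 2}'', which is precisely the triangle-inequality combination you spell out.
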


\subsection{Step 2: Existence of appropriate contours}
\label{contour def}

In this section, we prove the existence of appropriate completions for the contours $\Um_o\cup\Um_\times$, $\Up_o\cup\Up_\times$ and $\Xi_*$ into closed contours, on which the contribution coming from $e^{N(f_N(z)-f_N(w))}$ in the kernel $\widetilde\K_N(x,y)$ will bring an exponential decay as $N$ increases to infinity.
More precisely, we establish the following.

\begin{proposition}
\label{contours prop}
For every $\rho>0$ small enough, there exist $N$-dependent contours $\Um$, $\Up$ and $\Ttilde$ which satisfy 
for every $N$ large enough the following properties. 
\begin{itemize}
\item[{\rm (1)}] 
\begin{itemize}
\item[{\rm (a)}]
$\Um$ encircles the $\lambda_j^{-1}$'s smaller than $\frak c_N$
\item [{\rm (b)}]
$\Up$ encircles all the $\lambda_j^{-1}$'s larger than $\frak c_N$
\item[{\rm (c)}]
$\Ttilde$ encircles all  the $\lambda_j^{-1}$'s smaller than $\frak c_N$ and the origin
\end{itemize}

\item[{\rm (2)}]
\begin{itemize}
\item[{\rm (a)}] $\Um=\Um_o\cup \Um_{\times}\cup \Um_{res}$
\item[{\rm (b)}] $\Up=\Up_o\cup\Up_{\times}\cup \Up_{res}$ 
\item[{\rm (c)}] $\Ttilde =\Ttilde_*\cup \Ttilde_{res}$ 
\end{itemize}
\item[{\rm (3)}] 
There exists $K>0$ independent of $N$ such that 
\begin{itemize}
\item[{\rm (a)}] $\re \big( f_N(z)-f_N(\frak c_N)\big)\leq -K$ for all $z\in\Um_{res}\cup \Up_{res}$
\item[{\rm (b)}] $\re \big( f_N(w)-f_N(\frak c_N)\big)\geq K$ for all $w\in\Ttilde_{res}$
\end{itemize}

\item[{\rm (4)}] 
There exists $d>0$ independent of $N$ such that  
\begin{align*}
&\inf\big\{|z-w|: \;z\in \Um_{res}\cup \Up_{res},\,w\in\Ttilde\big\}  \geq d\\
&\inf\big\{|z-w|: \;z\in \Um\cup \Up,\,w\in\Ttilde_{res}\big\} \geq d
\end{align*}

\item[{\rm (5)}]
\begin{itemize}
\item[{\rm (a)}]
The contours $\Um$, $\Up$ and $\Ttilde$ lie in a bounded subset of $\C$ independent of  $N$
\item[{\rm (b)}]
The lengths of $\Um$, $\Up$ and $\Ttilde$ are uniformly bounded in $N$.
\end{itemize}
\end{itemize}
\end{proposition}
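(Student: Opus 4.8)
# Proof Proposal for Proposition \ref{contours prop}

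The plan is to construct the three closed contours explicitly out of level curves of the harmonic function $h_N(z) = \re\big(f_N(z) - f_N(\frak c_N)\big)$, then verify properties (1)--(5). The starting point is the local picture near $\frak c_N$. By the Taylor expansion of Lemma \ref{Taylor fN} together with \eqref{fgN2}--\eqref{fgN4}, near $\frak c_N$ we have $h_N(z) \approx \frac{1}{4!}g^{(3)}(\frak c)\,\re(z-\frak c_N)^4$ (the quadratic term being $O(1/\sqrt N)$ and hence subleading on a fixed scale), so the set $\{h_N < 0\}$ locally looks like two opposite sectors bisected by the vertical direction, while $\{h_N > 0\}$ looks like two opposite sectors bisected by the real direction. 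This is precisely why the contours $\Up_o\cup\Up_\times$ and $\Um_o\cup\Um_\times$ (which leave $\frak c_N$ along directions $\pm e^{\pm i\pi/4}$, i.e. into $\{h_N>0\}$... wait, into the horizontal sectors) must be completed through the region where $h_N$ is negative, and dually $\Xi_*$ (vertical) must be completed through $\{h_N>0\}$. First I would fix $\rho$ small as in \eqref{cond rho} and record, via Lemma \ref{Taylor fN} and the inequality \eqref{cond rho}, that on the endpoints of $\Up_\times,\Um_\times$ (at distance $\rho$ from $\frak c_N$ along $\pm e^{\pm i\pi/4}$) one has $h_N \le -c\rho^4$ for some $c>0$ uniform in $N$, and on the endpoints of $\Xi_*$ (at $\frak c_N \pm i\rho$) one has $h_N \ge c\rho^4$; this provides the ``seed'' values $\pm K$ with $K = c\rho^4$ for property (3).

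Next I would invoke the analysis of the global level sets of $h_N$, which is the heart of the matter and is deferred to (the omitted) Lemma \ref{key level sets}, the analogue of \cite[Lemma 4.11]{HHN-preprint}. The key input there is the maximum principle for the subharmonic function $\re f_N$ (equivalently, that $\re f_N$ has no interior local maxima away from the poles $\lambda_j^{-1}$, and tends to $-\infty$ near $z=0$ and near $\lambda_j^{-1}$ appropriately because of the $\log z$ and $-\frac1N\sum\log(1-\lambda_j z)$ terms): it forces the sublevel set $\{z : h_N(z) \le -K\}$ to have a connected component that stays bounded, avoids the $\lambda_j^{-1}$'s larger than $\frak c_N$, and whose boundary can be used to close up $\Up_\times$ into $\Up = \Up_o\cup\Up_\times\cup\Up_{\mathrm{res}}$ encircling exactly the $\lambda_j^{-1}$'s larger than $\frak c_N$; symmetrically for $\Um$ on the other side, and for the superlevel set $\{h_N \ge K\}$ one closes $\Xi_*$ into $\Ttilde = \Xi_*\cup\Ttilde_{\mathrm{res}}$ encircling the origin and the small $\lambda_j^{-1}$'s. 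One must use the uniform-in-$N$ control: since $g_N^{(k)} \to g^{(k)}$ uniformly near $\frak c$ (Montel + the regularity condition \eqref{RC}, as in Proposition \ref{prop:sequence-cN}), the level-set geometry is stable in $N$, giving the uniform constants $K$, $d$ and the uniform boundedness/length bounds in (3), (4), (5). The residual pieces $\Up_{\mathrm{res}},\Um_{\mathrm{res}},\Ttilde_{\mathrm{res}}$ are then chosen to lie in $\{h_N \le -K\}$ resp. $\{h_N \ge K\}$, which is (3), while (4) follows because $\Up_{\mathrm{res}}\cup\Um_{\mathrm{res}}$ lives in the two horizontal sectors (and their global continuations) and $\Ttilde$ lives in the vertical region, so they stay a fixed distance apart — one has to check this separation survives globally, again using the level-set lemma, and in particular that the residual arcs can be routed so as not to come close to $\Ttilde$ near $\frak c_N$ (they only meet at the matching points on $\partial B(\frak c_N, \rho)$, which are at distance $\rho$ from the imaginary segment's far parts, so one restricts $\rho$ further if needed, or more precisely uses that $\Ttilde$ near $\frak c_N$ is the vertical segment while the residuals emanate horizontally).

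Finally I would assemble: (1) topological statements follow from the construction (each closed contour is a Jordan curve by the level-set lemma, and the winding around each $\lambda_j^{-1}$ and the origin is read off from which region the contour bounds); (2) is the definition of the decomposition; (3) holds on the residuals by the defining inequality $\pm h_N \ge K$; (4) and (5) are the uniform geometric bounds extracted from the $N$-stable level-set picture. I expect the main obstacle to be exactly the same as in \cite{HHN-preprint}: establishing that the level sets $\{h_N = \pm K\}$ have the right global topology — that they separate the poles correctly and can be closed into Jordan curves with the claimed encircling properties, uniformly in $N$ — which is where the maximum principle for $\re f_N$ enters and which is handled by the (omitted) Lemma \ref{key level sets}. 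Granting that lemma, the rest is bookkeeping: matching the local contours $\Up_o,\Up_\times$ etc.\ to the global level curves at the circle $|z-\frak c_N| = \rho$ and checking the uniform constants, which goes through verbatim as in the soft-edge case of \cite{HHN-preprint} with the only change being that here $\frak c_N$ is a degenerate (order-four) saddle rather than an order-three one, so $\{h_N<0\}$ and $\{h_N>0\}$ each have two components locally instead of the soft-edge configuration — this is what dictates splitting into the $+$ and $-$ families $\Up,\Um$ on the sublevel side.
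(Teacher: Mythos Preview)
Your overall strategy---local saddle analysis near $\frak c_N$, global topology of sub/superlevel sets via the maximum principle, matching at $|z-\frak c_N|=\rho$---is the right one and matches the paper's approach in spirit. But there is a conceptual muddle that would cause trouble if carried out as written, and one piece of the global topology is stated incorrectly.

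The paper does \emph{not} work with the $N$-dependent level sets of $h_N=\re(f_N-f_N(\frak c_N))$. Instead it passes to the limiting harmonic function $\re f$ of \eqref{f} via Lemma~\ref{fN -> f}, analyzes once and for all the topology of the fixed sets $\Omega_\pm=\{\re f\lessgtr\re f(\frak c)\}$ (Lemmas~\ref{behaviour infinity}, \ref{triangle f}, \ref{study level sets}, \ref{key level sets}), constructs the residual paths $\Up_{res},\Um_{res},\Xi_{res}$ inside fixed compact tubular neighborhoods $\mathcal T\subset\Omega_1,\Omega_3,\Omega_2$, and only then transfers the strict inequalities $\re f\le\re f(\frak c)-3K$ on $\mathcal T$ to $\re f_N$ by uniform convergence. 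This is what produces uniform constants $K,d$ and uniform length bounds for free; your proposal to build contours out of the $N$-dependent curves $\{h_N=\pm K\}$ would require you to control the geometry of these curves uniformly in $N$, which is harder and unnecessary. Note also that Lemma~\ref{key level sets}, which you invoke, is a statement about the limiting $\re f$, not about $h_N$.

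Second, your description of the global topology is off in a way that matters for (1). The connected component $\Omega_1$ of $\Omega_-$ containing $\Delta_{\pm1}$ (through which $\Up_{res}$ runs) is \emph{unbounded}: by Lemma~\ref{study level sets}(1) it contains the right half-plane sector at infinity, and $\Up_{res}$ crosses the real axis to the right of all $\lambda_j^{-1}$'s, which is how $\Up$ comes to encircle the large $\lambda_j^{-1}$'s. Likewise $\Omega_2$ (for $\Xi_{res}$) is unbounded to the left. Only $\Omega_3$ (for $\Um_{res}$) is bounded, and it contains the origin---this is how $\Um$ encircles the small $\lambda_j^{-1}$'s without touching them (recall $\re f_N\to+\infty$ at each $\lambda_j^{-1}$, so these poles sit in $\Omega_+$). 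Your sketch has the boundedness reversed for $\Up$, and the mechanism by which (1a)--(1c) are achieved is not the ``boundary of a bounded component'' you describe but rather a choice of path through the correct (possibly unbounded) open region.
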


In order to provide a proof for Proposition \ref{contours prop}, we use the same approach as in \cite[Section 4.4]{HHN-preprint}, from which we borrow a few lemmas.  Introduce the asymptotic counterpart of $\re f_N$, namely
\eq
\label{f}
\re f(z)=-\frak a\, \re (z- \frak c)+\log|z|-\gamma \int \log|1-xz|\, \nu(\d x),\qquad z\in\C\setminus\mathcal K,
\qe
where we recall that $\mathcal K$ has been introduced in~\eqref{def:K}. We shall use the following property.
  
\begin{lemma}  
\label{fN -> f}$\re f_N$ converges locally uniformly to $\re f$ on $\C\setminus\mathcal K$, and moreover,
\eq
\label{re conv}
\lim_{N\rightarrow\infty}\re f_N(\frak c_N)=\re f(\frak c).
\qe
\end{lemma}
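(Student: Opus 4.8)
The plan is to prove Lemma \ref{fN -> f} in two parts: first the local uniform convergence of $\re f_N$ to $\re f$ on $\C\setminus\mathcal K$, and then the convergence of the sequence $\re f_N(\frak c_N)$ to $\re f(\frak c)$, which combines the first part with the convergence $\frak c_N\to\frak c$.

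First I would recall from \eqref{fN} that, wherever an analytic branch of the logarithm is chosen, $\re f_N(z) = -\frak a_N\,\re(z-\frak c_N) + \log|z| - \frac1N\sum_{j=1}^n \log|1-\lambda_j z|$, and this expression is unambiguous regardless of the chosen branch since only the modulus of the logarithm appears. Comparing with \eqref{f}, the first two terms converge: $\frak a_N\to\frak a$ and $\frak c_N\to\frak c$ by Proposition \ref{prop:sequence-cN}, and $\log|z|$ is exactly the corresponding term in $\re f$. The main point is therefore the convergence of the integral term, i.e.\ of $\frac1N\sum_{j=1}^n\log|1-\lambda_j z| = \frac nN\int \log|1-\lambda z|\,\nu_N(\d\lambda)$ towards $\gamma\int\log|1-\lambda z|\,\nu(\d\lambda)$. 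Since $n/N\to\gamma$, it suffices to control $\int\log|1-\lambda z|\,\nu_N(\d\lambda)$. For $z$ in a fixed compact subset $L$ of $\C\setminus\mathcal K$, the function $\lambda\mapsto \log|1-\lambda z|$ is continuous and bounded uniformly in $z\in L$ on the compact set $\big[\inf_N \lambda_n^{-1},\sup_N\lambda_1^{-1}\big]$ where all the $\lambda_j$'s live by Assumption \ref{ass:nu}(a): indeed $|1-\lambda z|$ stays bounded above and bounded away from zero because $\dist(z,\mathcal K)>0$ and $1/z\notin[\inf_N\lambda_n^{-1},\sup_N\lambda_1^{-1}]$ (equivalently, $1-\lambda z$ does not vanish for these $\lambda$'s and this $z$). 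Hence one may apply the weak convergence $\nu_N\to\nu$ from Assumption \ref{ass:nu}(b). To upgrade pointwise convergence on $L$ to uniform convergence, I would invoke either an equicontinuity/Arzelà--Ascoli argument — the maps $z\mapsto \frac nN\int\log|1-\lambda z|\,\nu_N(\d\lambda)$ are uniformly Lipschitz on $L$ since their derivatives $\frac nN\int \frac{-\lambda}{1-\lambda z}\nu_N(\d\lambda)$ are uniformly bounded on $L$ — or, more cleanly, note that $f_N$ is holomorphic on $\C\setminus\mathcal K$ with a uniform-in-$N$ local bound, so Montel's theorem gives local uniform convergence of $f_N$ (hence of $\re f_N$) along any subsequence, and the pointwise limit identifies the full limit as $\re f$.

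For the second statement \eqref{re conv}, I would fix a small closed ball $\overline{B(\frak c,r)}\subset\C\setminus\mathcal K$ (which exists since $\frak c\notin\mathcal K$, as $\frak c\in D$ and the regularity condition \eqref{RC} separates $\frak c$ from the $\lambda_j^{-1}$'s), so that $\frak c_N\in \overline{B(\frak c,r)}$ for $N$ large. Then write
\[
\big|\re f_N(\frak c_N)-\re f(\frak c)\big| \le \sup_{z\in \overline{B(\frak c,r)}}\big|\re f_N(z)-\re f(z)\big| + \big|\re f(\frak c_N)-\re f(\frak c)\big|,
\]
where the first term tends to $0$ by the local uniform convergence just established, and the second by continuity of $\re f$ at $\frak c$ together with $\frak c_N\to\frak c$. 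This completes the proof.

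The only genuine obstacle is making sure the integrand $\log|1-\lambda z|$ is a legitimate test function for the weak convergence $\nu_N\to\nu$, i.e.\ that it is continuous and bounded on a neighbourhood of $\supp(\nu)\cup\bigcup_N\supp(\nu_N)$ uniformly in $z$ ranging over the relevant compact set; this is exactly where Assumption \ref{ass:nu}(a) (uniform confinement of the $\lambda_j$'s away from $0$ and $\infty$) and the definition \eqref{def:K} of $\mathcal K$ enter. Everything else is routine. Since this argument is essentially identical to \cite[Lemma 4.8]{HHN-preprint} or the analogous statement there, one could alternatively simply invoke that reference; I would include the short self-contained argument above for completeness.
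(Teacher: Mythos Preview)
Your argument is correct and complete. The paper itself does not give a proof at all: it simply writes ``See \cite[Lemma 4.7(a)]{HHN-preprint}'' (note: you cite Lemma 4.8 at the end, but the paper points to Lemma 4.7(a)). So you have done more than the paper does here, supplying the standard weak-convergence-plus-equicontinuity argument that the reference presumably contains.

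One small inaccuracy worth cleaning up: your parenthetical ``$1/z\notin[\inf_N\lambda_n^{-1},\sup_N\lambda_1^{-1}]$'' is not literally true for all $z\in\C\setminus\mathcal K$, since $z$ may lie in $B(\frak c,\epsilon)$, which can intersect that real interval. The correct justification for $|1-\lambda z|$ being bounded away from zero is simply that every $\lambda_j^{-1}$ belongs to $\mathcal K$ (this is exactly how $\mathcal K$ was built from the regularity condition \eqref{RC}), so $\dist(z,\{\lambda_j^{-1}\})\geq\dist(L,\mathcal K)>0$ uniformly in $j$, $N$, and $z\in L$. Your conclusion is right; only the parenthetical explanation needs adjusting.
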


\begin{proof}
See \cite[Lemma 4.7(a)]{HHN-preprint}.
\end{proof}

We now turn to a qualitative analysis for the map $\re f$.  To do so, introduce  the sets
\eq
\Omega_-=\big\{z \in\C:\; \re f(z)<\re f(\frak c)\big\},\qquad \Omega_+=\big\{z \in\C:\; \re f(z)>\re f(\frak c)\big\}.
\qe
The next lemma encodes the behaviours of $\re f$ as $|z|\to\infty$.
\begin{lemma}
\label{behaviour infinity}
Both $\Omega_+$ and $\Omega_-$ have a unique unbounded connected component. Moreover, given any $\alpha\in(0,\pi/2)$, there exists $R>0$ large enough such that
\begin{align}
\label{omega R -}
& \Omega_-^R = \left\{z\in\C : \quad |z|>R,\quad -\frac{\pi}{2}+\alpha<\arg(z)<\frac{\pi}{2}-\alpha\right\}\subset \Omega_-\ ,\\
\label{omega R +}
& \Omega_+^R = \left\{z\in\C : \quad |z|>R,\quad \frac{\pi}{2}+\alpha<\arg(z)<\frac{3\pi}{2}-\alpha\right\}\subset \Omega_+\ .
\end{align}
\end{lemma}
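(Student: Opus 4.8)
The plan is to do a routine but careful asymptotic analysis of $\re f$ as $|z|\to\infty$, and then to deduce the topological statement by inspecting the level set $\{\re f=\re f(\frak c)\}$ on large circles. Since $\mathcal K$ is bounded, $\re f$ is finite and real-analytic on $\{|z|>R_0\}$ for $R_0$ large. Writing $z=re^{i\theta}$ and using that $\supp(\nu)$ is a compact subset of $(0,\infty)$ by Assumption \ref{ass:nu}(a), together with $\nu(\R)=1$, one gets $\log|1-xz|=\log r+\log x+O(1/r)$ uniformly for $x\in\supp(\nu)$ and $\theta$, whence
\[
\re f(z)=-\frak a\, r\cos\theta+(1-\gamma)\log r+C+O(1/r),\qquad C=\frak a\,\frak c-\gamma\!\int\!\log x\,\nu(\d x),
\]
uniformly in $\theta$. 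Note $\frak a>0$, since $\frak a$ is an interior point of $\supp(\rho)\subset[0,\infty)$ and $\frak c=m(\frak a)\in\R$. This expansion is the heart of the matter.

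From it, the cone inclusions follow at once. If $z\in\Omega_-^R$ then $|\arg z|\le\pi/2-\alpha$, so $\cos(\arg z)\ge\sin\alpha>0$ and $\re f(z)\le-\frak a|z|\sin\alpha+(1-\gamma)\log|z|+C+O(1/|z|)\to-\infty$ as $|z|\to\infty$, uniformly in $\arg z$; hence for $R$ large one has $\re f(z)<\re f(\frak c)$ throughout $\Omega_-^R$, that is $\Omega_-^R\subset\Omega_-$. Symmetrically, for $z\in\Omega_+^R$ one has $\cos(\arg z)\le-\sin\alpha$, so $\re f(z)\to+\infty$ and $\Omega_+^R\subset\Omega_+$ for $R$ large. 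In particular both $\Omega_\pm$ possess unbounded connected components.

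For uniqueness I would show that on every large circle $\Omega_-$ (resp.\ $\Omega_+$) cuts out a single arc. Differentiating,
\[
\partial_\theta\,\re f(re^{i\theta})=r\sin\theta\Bigl(\frak a-\gamma\!\int\!\frac{x}{|1-xre^{i\theta}|^2}\,\nu(\d x)\Bigr),
\]
and the integral is $O(1/r^{2})$ uniformly in $\theta$, so for $r\ge R_1$ the bracket exceeds $\frak a/2>0$; thus $\theta\mapsto\re f(re^{i\theta})$ strictly decreases on $(-\pi,0)$ and strictly increases on $(0,\pi)$, with minimum $\re f(r)\to-\infty$ and maximum $\re f(-r)\to+\infty$. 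Hence, for $r$ large, $\{\theta:\re f(re^{i\theta})<\re f(\frak c)\}$ is a single open arc around $\theta=0$ and $\{\theta:\re f(re^{i\theta})>\re f(\frak c)\}$ is a single open arc around $\theta=\pi$. Enlarging $R$ so that in addition $r\mapsto\re f(r)$ is decreasing on $[R,\infty)$ (its derivative tends to $-\frak a<0$) with $\re f(R)<\re f(\frak c)$, the half-line $[R,\infty)$ lies in $\Omega_-$ and meets each of those arcs, so $\Omega_-\cap\{|z|>R\}$ is connected; likewise $(-\infty,-R]\subset\Omega_+$ makes $\Omega_+\cap\{|z|>R\}$ connected. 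Finally, any unbounded connected component $V$ of $\Omega_-$ meets $\{|z|>R\}$, and $V\cap\{|z|>R\}\subset\Omega_-\cap\{|z|>R\}$, which is connected and hence lies in a single component of $\Omega_-$; therefore $V$ equals that component, proving uniqueness for $\Omega_-$, and the same argument applies verbatim to $\Omega_+$.

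The genuinely routine parts are the expansion and the cone estimates; the only point deserving care — the main obstacle, such as it is — is the uniqueness argument, specifically verifying that $\Omega_-$ and $\Omega_+$ each meet a large circle in exactly one arc, which hinges on the sign of $\partial_\theta\re f$ and on the monotonicity of $\re f$ along the positive and negative real axes.
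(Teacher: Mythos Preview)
Your proof is correct and complete. The paper itself does not prove this lemma but simply refers to \cite[Lemma 4.8]{HHN-preprint}, so there is no in-paper argument to compare against; your asymptotic expansion of $\re f$, the monotonicity of $\theta\mapsto\re f(re^{i\theta})$ via the explicit formula for $\partial_\theta\re f$, and the ensuing single-arc argument give exactly the kind of self-contained justification the cited reference provides.
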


\begin{proof}
See \cite[Lemma 4.8]{HHN-preprint}.
\end{proof}
Next, we describe the behaviour of $\re f$ in a neighbourhood of $\frak c$. 

 \begin{lemma}
\label{triangle f}   
There exist $\eta>0$ and $\theta>0$ small enough such that, if we set 
\eq
\label{Delta k}
\Delta_k=\Big\{z\in \C:\quad 0<|z-\frak c|<\eta,\quad \left|\arg(z-\frak c)-k\frac{\pi}{4}\right|<\theta\Big\},
\qe
then 
\[
\Delta_{ \pm 1},\;\Delta_{\pm 3}\subset \Omega_-,\qquad \Delta_{0},\; \Delta_{\pm 2},\;\Delta_4\subset \Omega_+.
\]
\end{lemma}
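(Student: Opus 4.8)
The plan is to analyze $\re f$ in a neighbourhood of $\frak c$ by using the local expansion of $f_N$ (hence of its limit) provided by Lemma \ref{Taylor fN} together with the vanishing conditions \eqref{fgN2}--\eqref{fgN4}. Passing to the $N\to\infty$ limit via Lemma \ref{fN -> f}, one obtains an analytic representation of $f$ near $\frak c$ for which $f'(\frak c)=0$, $f''(\frak c)=0$, $f^{(3)}(\frak c)=0$, and $f^{(4)}(\frak c)=g^{(3)}(\frak c)>0$. Consequently, as $z\to\frak c$,
\eq
\label{triangle f exp}
f(z)-f(\frak c)=\frac{g^{(3)}(\frak c)}{4!}\,(z-\frak c)^4+o\big(|z-\frak c|^4\big)\ ,
\qe
and therefore, writing $z-\frak c=re^{i\phi}$ with $r>0$ small,
\[
\re\big(f(z)-f(\frak c)\big)=\frac{g^{(3)}(\frak c)}{4!}\,r^4\cos(4\phi)+o(r^4)\ .
\]

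First I would fix $\phi_0=k\pi/4$ for $k\in\{\pm1,\pm3\}$, so that $\cos(4\phi_0)=-1$; by continuity of $\cos(4\cdot)$ there is $\theta>0$ such that $\cos(4\phi)\leq -1/2$ for all $\phi$ with $|\phi-\phi_0|<\theta$ (choosing $\theta$ uniformly over the finitely many such $k$). Then for $z$ in the corresponding sector, $\re(f(z)-f(\frak c))\leq -\frac{g^{(3)}(\frak c)}{4!}\cdot\frac12 r^4+o(r^4)$, which is strictly negative once $r<\eta$ for $\eta$ small enough; hence $\Delta_{\pm1},\Delta_{\pm3}\subset\Omega_-$. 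Symmetrically, for $\phi_0=k\pi/4$ with $k\in\{0,\pm2,4\}$ one has $\cos(4\phi_0)=1$, so shrinking $\theta$ and $\eta$ further gives $\re(f(z)-f(\frak c))\geq\frac{g^{(3)}(\frak c)}{4!}\cdot\frac12 r^4+o(r^4)>0$ on those sectors, whence $\Delta_0,\Delta_{\pm2},\Delta_4\subset\Omega_+$. Taking the minimum of the finitely many $\theta$'s and $\eta$'s and noting that the apex $z=\frak c$ is excluded (so $r>0$) finishes the argument; one also checks that for $\eta$ small these sectors lie in $\C\setminus\mathcal K$ since $\frak c\in D$ stays away from $\mathcal K$, so that the analytic representation of $f$ used in \eqref{triangle f exp} is legitimate there.

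The only mildly delicate point is making the $o(r^4)$ error term uniform over the angular variable $\phi$ so that the sign of $\re(f(z)-f(\frak c))$ is genuinely controlled on a full sector of fixed opening $2\theta$ and not merely along rays; this is handled by the quantitative fifth-order remainder bound $|f(z)-f(\frak c)-\frac{g^{(3)}(\frak c)}{4!}(z-\frak c)^4|\leq \Delta|z-\frak c|^5$, which is the $N\to\infty$ analogue of Lemma \ref{Taylor fN} and follows from it by locally uniform convergence, giving an error $O(r^5)$ that is beaten by the $r^4$ main term once $r$ is small. Everything else is the elementary trigonometric bookkeeping above.
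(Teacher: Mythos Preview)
Your proposal is correct and follows essentially the same approach as the paper: both arguments rest on the fourth-order Taylor expansion of $f$ at $\frak c$, use that the leading term is $\frac{g^{(3)}(\frak c)}{4!}(z-\frak c)^4$ with $g^{(3)}(\frak c)>0$, and read off the sign of $\re(z-\frak c)^4$ along the directions $k\pi/4$ to place the sectors $\Delta_k$ in $\Omega_\pm$, controlling the remainder by a uniform $O(|z-\frak c|^5)$ bound. The only difference is cosmetic: the paper defines $f$ directly on a ball $B(\frak c,2\eta)\subset\C\setminus\mathcal K$ via a choice of logarithm, observes $f'=g-\frak a$, and computes the derivatives $f'(\frak c)=f''(\frak c)=f^{(3)}(\frak c)=0$, $f^{(4)}(\frak c)=g^{(3)}(\frak c)$ straight from Proposition~\ref{cusp=>inflexion}, whereas you take the slightly longer route of passing the $f_N$ expansion of Lemma~\ref{Taylor fN} to the limit via Lemma~\ref{fN -> f}; the direct computation is cleaner and avoids any worry about whether the limit of $\re f_N$ inherits an analytic representation, but your route also works.
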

The regions $\Delta_k$ are shown on Figure~\ref{fig:ptselle}.

\begin{proof} 
Let $\eta>0$ be small enough so that $ B(\frak c,2\eta)\subset\C\setminus\mathcal K$. In particular, one can choose a determination of the logarithm such that the map 
\[
 f(z)=-\frak a (z- \frak c)+\log(z)-\gamma \int \log(1-xz)\, \nu(\d x)
\]
is well-defined and holomorphic on $B(\frak c,2\eta)$, and its real part is
given by \eqref{f}.  Moreover, we have $f'(z)=g(z)-\frak a$. Since 
$\frak a=g(\frak c)$ and $g'(\frak c)=g''(\frak c)=0$, a Taylor expansion 
of $f$ around $\frak c$ then yields for every $z\in B(\frak c,\eta)$,
\begin{align*} 
\left| \re f(z)- \re f(\frak c) -
         \frac1{4!}g^{(3)}(\frak c)\re(z-\frak c)^4 \right| 
&\leq \left| f(z)- f(\frak c) -\frac1{4!}g^{(3)}(\frak c)(z-\frak c)^4\right|
\\ 
&\leq \frac{1}{5!}|z - \frak c|^5 \max_{B(\frak c,\eta)}|g^{(4)}| . 
\end{align*}  
Since $\re(z-\frak c)^4= (-1)^kr^4$ when $z=\frak c+re^{i k\pi /4}$, and 
because $g^{(3)}(\frak c)>0$, the lemma follows by choosing $\eta$ and then 
$\theta$ small enough.
\end{proof}

Let $\Omega_{2\ell +1}$ be the connected component of $\Omega_-$ which contains $\Delta_{2\ell+1}$. Similarly,  let $\Omega_{2\ell}$ be the connected component of $\Omega_+$ which contains $\Delta_{2\ell}$. We now prove that the following holds true.

\begin{lemma}\
\label{study level sets}
\begin{itemize}
\item[{\rm (1)}]
 We have $\Omega_{1}=\Omega_{-1}$, the interior of $\Omega_1$ is connected, and for every $0<\alpha<\pi/2$ there exists $R>0$ such that 
\eq
\label{omega 1 R}
\left\{z\in\C : \quad |z|>R,\quad -\frac{\pi}{2}+\alpha<\arg(z)<\frac{\pi}{2}-\alpha\right\}\subset \Omega_1\ .
\qe
\item[{\rm (2)}]
 We have $\Omega_{2}=\Omega_{-2}$, the interior of $\Omega_2$ is connected, and  there exists $R>0$ such that
\eq
\label{omega 2 R}
\left\{z\in\C : \quad |z|>R,\quad \frac{\pi}{2}+\alpha<\arg(z)<\frac{3\pi}{2}-\alpha\right\}\subset \Omega_2\ .
\qe
\item[{\rm (3)}]  We have $\Omega_{3}=\Omega_{-3}$, the interior of $\Omega_3$ is connected and there exists $\delta>0$ such that $B(0,\delta)\subset \Omega_3$.
\end{itemize}
\end{lemma}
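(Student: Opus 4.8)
The plan is to read this off as a statement about the level sets of $\re f$ at the height $\re f(\frak c)$, using four inputs. (i) \emph{Potential-theoretic structure.} Writing $\log|1-xz|=\log x+\log|z-1/x|$ in \eqref{f} (legitimate since $\supp\nu\subset(0,\infty)$), the function $\re f$ equals, up to an additive constant, a function harmonic on $\C\setminus\{0\}$ plus $\gamma$ times the logarithmic potential of the image of $\nu$ under $x\mapsto 1/x$; hence $-\re f$ is subharmonic on $\C\setminus\{0\}$, with $\re f(z)\to-\infty$ as $z\to 0$ and $\re f(z)\to+\infty$ as $z$ tends to an atom of that image measure, and with no other singularities. (ii) \emph{Symmetry.} Since $\frak a$, $\frak c$ and $\nu$ are real, $\re f(\bar z)=\re f(z)$, so $\Omega_+$ and $\Omega_-$ are invariant under conjugation. (iii) \emph{Behaviour at infinity.} By Lemma~\ref{behaviour infinity} each of $\Omega_\pm$ has a unique unbounded component, call them $U_\pm$, and $U_-\supset\Omega_-^R$, $U_+\supset\Omega_+^R$ for every $\alpha\in(0,\pi/2)$ and $R$ large. (iv) \emph{Local picture.} Lemma~\ref{triangle f} gives the sectors $\Delta_k$ of \eqref{Delta k} around $\frak c$. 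The workhorses will be the maximum principle for the subharmonic function $-\re f$ and, for the global part, the construction of explicit connecting curves.

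First I would pin down the components of $\Omega_-$. If $\Omega$ is a bounded component of $\Omega_-$ whose closure avoids the singularities of $\re f$, then $-\re f$ is subharmonic near $\overline\Omega$, equals $-\re f(\frak c)$ on $\partial\Omega$ (contained in the level set $\{\re f=\re f(\frak c)\}$), and is $>-\re f(\frak c)$ on $\Omega$, contradicting the maximum principle. Since $\overline\Omega$ cannot meet a point where $\re f\to+\infty$ (there $\re f>\re f(\frak c)$), it must meet the only point where $\re f\to-\infty$, namely $0$; and as a punctured neighbourhood of $0$ is connected and contained in $\Omega_-$, there is at most one such bounded component $V_-$, so $\Omega_-=U_-\cup V_-$, possibly with $U_-=V_-$. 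Both $U_-$ and $V_-$ are conjugation-invariant: $U_-$ by the uniqueness in (iii), and $V_-$ because its conjugate is again a connected subset of $\Omega_-$ containing a punctured disc about $0$. Hence for odd $k$ the component $\Omega_k$ of $\Delta_k$ coincides with its conjugate, which is the component $\Omega_{-k}$ of $\overline{\Delta_k}=\Delta_{-k}$; this gives $\Omega_1=\Omega_{-1}$ and $\Omega_3=\Omega_{-3}$. By the same argument $U_+$ is conjugation-invariant, so it will be enough to show $\Delta_2\subset U_+$ in order to obtain $\Omega_2=\Omega_{-2}=U_+$. Being components of open subsets of $\C$, the $\Omega_k$ are open and connected, so their interiors equal themselves and are connected.

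It then remains to locate the petals, i.e.\ to prove $\Delta_1\subset U_-$, $\Delta_2\subset U_+$, and $\Delta_3\subset V_-$; the last gives $B(0,\delta)\subset\Omega_3$, since $\re f\to-\infty$ at $0$ forces a punctured disc $B(0,\delta)\setminus\{0\}$ into $\Omega_-$, hence into $V_-$. For $\Delta_1$ and $\Delta_2$ this amounts to joining the relevant petal, inside $\Omega_-$ resp.\ $\Omega_+$, to the sector $\Omega_-^R$ around the positive real half-line resp.\ $\Omega_+^R$ around the negative one; for $\Delta_3$, since $\Omega_-$ has only the components $U_-$ and $V_-$ and $V_-$ is the one attached to $0$, it is enough to show $\Delta_3$ is \emph{not} joined inside $\Omega_-$ to those sectors at infinity. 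Each of these is obtained by exhibiting an explicit curve — starting in the petal given by Lemma~\ref{triangle f}, routed between or around the logarithmic poles of $\re f$ at a suitably chosen altitude, and ending in the prescribed region at infinity (or, for $\Delta_3$, a bounded loop separating it from infinity) — along which the sign of $\re f(z)-\re f(\frak c)$ is controlled using the quartic expansion of $\re f$ at $\frak c$ from the proof of Lemma~\ref{triangle f}, the asymptotics of Lemma~\ref{behaviour infinity}, and the maximum principle for $-\re f$ on auxiliary bounded domains bordered by arcs of $\{\re f=\re f(\frak c)\}$ together with rays issued from $\frak c$. This is the argument carried out, for the analogous double-zero configuration at a soft edge, in the proof of \cite[Lemma~4.11]{HHN-preprint}; the only new feature here is that the local picture at $\frak c$ has the eight-sector (quartic) shape of Lemma~\ref{triangle f} rather than the four-sector (quadratic) one, so that there are two valley petals and two mountain petals to track on each side of the real axis, which conjugation pairs up.

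The main obstacle is precisely that last, global step: bridging the local behaviour near $\frak c$ (Lemma~\ref{triangle f}) and the behaviour at infinity (Lemma~\ref{behaviour infinity}) across the logarithmic singularities while keeping $\re f(z)-\re f(\frak c)$ of the correct sign along the connecting curves. This resists Taylor expansion, and it is exactly where one cuts out auxiliary bounded regions whose boundary consists of pieces of the level curve $\{\re f=\re f(\frak c)\}$ together with rays emanating from $\frak c$, and invokes the maximum principle for subharmonic functions, as in \cite{HHN-preprint}; the remainder of the proof is soft point-set topology.
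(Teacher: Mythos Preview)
Your argument has one genuine gap and one strategic divergence from the paper's proof.

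\textbf{The gap.} You assert that the $\Omega_k$ ``are open and connected, so their interiors equal themselves''. For even $k$ this is fine, since $-\re f$ is subharmonic (hence upper semi-continuous) on $\C\setminus\{0\}$, so $\Omega_+=\{\re f>\re f(\frak c)\}$ is open. For odd $k$ it is not: $\re f$ is only \emph{lower} semi-continuous in general, because the term $-\gamma\int\log|1-xz|\,\nu(\d x)$ is a logarithmic potential of the pushforward measure and may fail to be continuous on its support. Hence $\Omega_-$ need not be open, and a connected component of $\Omega_-$ can contain boundary points (on $\supp(\nu)^{-1}$) at which the strict inequality persists while a neighbourhood is not contained in $\Omega_-$. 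This is precisely why the statement of the lemma speaks of ``the interior of $\Omega_k$'' rather than $\Omega_k$ itself, and why the paper relies on parts~(2b) and~(2c) of Lemma~\ref{key level sets} (imported from \cite{HHN-preprint}) to obtain connectedness of those interiors. Your maximum-principle dichotomy ``bounded component of $\Omega_-\Rightarrow$ attached to $0$'' survives this subtlety, but the conclusion on interiors does not follow from it.

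\textbf{The strategic difference.} The paper does \emph{not} attempt to ``locate the petals'' by exhibiting explicit connecting curves from $\Delta_1,\Delta_2,\Delta_3$ to the correct region at infinity or to $0$. Instead it argues by contradiction and surrounding: it first shows $\Omega_2$ is unbounded, because if bounded it would (by Lemma~\ref{key level sets}(1)) contain some $x^{-1}$ with $x\in\supp(\nu)$, and then by the symmetry $\re f(\bar z)=\re f(z)$ the set $\Omega_2=\Omega_{-2}$ would completely surround $\Omega_1$ or $\Omega_3$, producing a bounded component of $\Omega_-$ not containing $0$ --- impossible by Lemma~\ref{key level sets}(2a). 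Once $\Omega_2$ is known to be unbounded (hence equal to the unique unbounded component of $\Omega_+$ and containing the left sector at infinity), the same surrounding picture forces $\Omega_3$ to be bounded (hence to contain $0$) and $\Omega_1$ to avoid $0$ (hence to be unbounded, by (2a)). The identities $\Omega_k=\Omega_{-k}$ then fall out of the uniqueness of the relevant components together with conjugation symmetry. Your proposed route --- connect each $\Delta_k$ to its target region by an explicit curve along which $\re f-\re f(\frak c)$ keeps a fixed sign --- is not contained in \cite[Lemma~4.11]{HHN-preprint} as stated (that lemma is Lemma~\ref{key level sets} here), and carrying it out would require genuinely new estimates crossing the support of $\nu^{-1}$. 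The paper's topological surrounding argument avoids that work entirely.

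A minor point: at a regular soft edge one has $f''(\frak c)=g'(\frak c)=0$ but $f^{(3)}(\frak c)=g''(\frak c)\neq 0$, so the local picture there is six-sector (cubic), not four-sector.
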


To prove the lemma, we use the following key properties.

\begin{lemma}
\label{key level sets}
\begin{itemize} 
\item[{\rm (1)}]
If $\Omega_*$ is a  connected component of $\Omega_+$, then $\Omega_*$ is open and, if  $\Omega_*$ is moreover bounded,  there exists $x\in\supp(\nu)$ such that $x^{-1}\in\Omega_*$.
\item[{\rm (2)}]
Let  $\Omega_*$ be a connected component of $\Omega_-$ such that $\Omega_*\not\subset\R$. 
\begin{itemize}
\item[{\rm (a)}]
If $\Omega_*$ is bounded, then $0\in\Omega_*$. 
\item[{\rm (b)}]
If $\Omega_*$ is bounded, then its interior is connected.  
\item[{\rm (c)}]
If $0\notin\Omega_*$, then the interior of $\Omega_*$ is connected.
\end{itemize}
\end{itemize}
\end{lemma}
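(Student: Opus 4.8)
The plan is to follow the scheme of \cite[Lemma 4.11]{HHN-preprint} and deduce the four assertions from the maximum principle. First I would record the relevant properties of $\re f$ from \eqref{f}. Writing $S=\{x^{-1}:x\in\supp(\nu)\}$, a compact subset of $(0,+\infty)$ not containing $\frak c$, one checks that $\re f$ is harmonic on $\C\setminus(\{0\}\cup S)$, subharmonic on $\C\setminus S$ (because of the term $\log|z|$), superharmonic on $\C\setminus\{0\}$ (because of the term $-\gamma\int\log|1-xz|\,\nu(\d x)$, whose harmonicity off $S$ comes from differentiating under the integral, $\supp(\nu)$ being compact), continuous on $\C\setminus(\{0\}\cup S)$ and tending to $-\infty$ at the origin. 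Consequently $\Omega_+,\Omega_-$ and all their connected components are open, $\partial\Omega_\pm\subset\{\re f=\re f(\frak c)\}$, a punctured disc around $0$ lies in $\Omega_-$ so that $0\notin\overline{\Omega_+}$, and the component of $\Omega_-$ containing $0$ contains a whole ball $B(0,\delta)$; this already yields the openness in (1) and the ball in Lemma \ref{study level sets}(3).

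For assertion (1), let $\Omega_*$ be a bounded connected component of $\Omega_+$. If $\Omega_*$ meets $S$ we are done, so assume it does not. Using that $\Omega_\pm$ are open (so that $\Omega_-$ has no isolated points) one checks that $\overline{\Omega_*}$ then avoids $S\cup\{0\}$ as well. Hence $\re f$ is continuous and subharmonic on a neighbourhood of the compact set $\overline{\Omega_*}$ and equals $\re f(\frak c)$ on $\partial\Omega_*$, so the maximum principle for subharmonic functions gives $\re f\le\re f(\frak c)$ on $\Omega_*$, contradicting $\Omega_*\subset\Omega_+$.

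Assertion (2)(a) is the mirror image: for a bounded component $\Omega_*$ of $\Omega_-$ with $\Omega_*\not\subset\R$, if $0\notin\Omega_*$ then, using that $B(0,\delta)\subset\Omega_-$ is connected, $0\notin\overline{\Omega_*}$; then $\re f$ is superharmonic and finite on a neighbourhood of $\overline{\Omega_*}$ and equals $\re f(\frak c)$ on $\partial\Omega_*$, so the minimum principle gives $\re f\ge\re f(\frak c)$ on $\Omega_*$, a contradiction; hence $0\in\Omega_*$. For (2)(b) and (2)(c), since $\Omega_*$ is open and connected the claim is the regularity statement $\interior(\overline{\Omega_*})=\Omega_*$: a point $p\in\interior(\overline{\Omega_*})\setminus\Omega_*$ would lie in $\partial\Omega_*$, hence $\re f(p)=\re f(\frak c)$, and a neighbourhood $U$ of $p$ would satisfy $U\subset\overline{\Omega_*}\subset\{\re f\le\re f(\frak c)\}$; since $p\neq0$ (by (2)(a) in the bounded case, and because $0\notin\overline{\Omega_*}$ in case (c)), $\re f$ is superharmonic on $U$ with an interior maximum at $p$, so the sub-mean-value inequality forces $\re f=\re f(\frak c)$ a.e. near $p$, which is incompatible with the open set $\Omega_*\subset\{\re f<\re f(\frak c)\}$ clustering at $p$. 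Thus $\interior(\overline{\Omega_*})=\Omega_*$, which is connected.

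The analytic ingredients (the maximum and minimum principles, differentiation under the integral, the sub-mean-value property) are short; the main obstacle is the topological bookkeeping common to all four parts — showing that when $\Omega_*$ avoids a singularity of $\re f$ so does $\overline{\Omega_*}$, that $\re f\equiv\re f(\frak c)$ exactly on $\partial\Omega_*$, and that the relevant (sub/super)harmonicity and finiteness survive up to $\partial\Omega_*$ so that the principles genuinely apply — together with the bounded/unbounded case split and the harmless hypothesis $\Omega_*\not\subset\R$ that rules out a degenerate lower-dimensional component. This is exactly what is carried out in \cite[Lemma 4.11]{HHN-preprint}, whose proof transfers essentially verbatim, with $\re f$ here in place of the analogous logarithmic potential there.
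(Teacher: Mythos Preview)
Your approach — the maximum principle for the subharmonic piece of $\re f$ in (1), the minimum principle for the superharmonic piece in (2)(a), with the decomposition coming from $\log|z|$ and the logarithmic potential $-\gamma\int\log|1-xz|\,\nu(\d x)$ — is exactly what the paper invokes via \cite[Lemma~4.11]{HHN-preprint}, and your sketches for (1) and (2)(a) are essentially right.

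There is, however, a genuine gap in your handling of (2)(b) and (2)(c). You assert that $\Omega_-$ (and hence every component $\Omega_*$) is open, but this is not justified: $\re f$ is lower semicontinuous on $\C$ (so $\Omega_+$ is indeed open), yet on $S=\{x^{-1}:x\in\supp(\nu)\}$ the logarithmic potential $-\gamma\int\log|1-xz|\,\nu(\d x)$ is only lower semicontinuous for a general compactly supported $\nu$, so $\Omega_-=\{\re f<\re f(\frak c)\}$ may fail to be a neighbourhood of points of $S\cap\Omega_-$. This is precisely why the lemma speaks of the \emph{interior} of $\Omega_*$ rather than $\Omega_*$, and why the hypothesis $\Omega_*\not\subset\R$ appears — it is not a harmless nondegeneracy clause but rules out components trapped inside $S\subset\R$, which would have empty interior. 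Once you (incorrectly) grant that $\Omega_*$ is open, (2)(b)--(c) become tautologies, and your rephrasing as $\interior(\overline{\Omega_*})=\Omega_*$ is a different statement; in addition, a superharmonic function with an interior \emph{maximum} need not be constant, so the strong-maximum-principle step you invoke is oriented the wrong way. The actual content of (2)(b)--(c) is that deleting from $\Omega_*$ its non-interior points (all of which lie in $S$) does not disconnect it, and this is what the argument of \cite[Lemma~4.11]{HHN-preprint} establishes.
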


\begin{proof}
See \cite[Lemma 4.11]{HHN-preprint}.
\end{proof}

\begin{proof}[Proof of Lemma \ref{study level sets}] We first prove (2). Since
$\Omega_2$ is by definition a connected subset of $\Omega_+$, 
Lemma~\ref{key level sets}--(1) yields that its interior is connected 
(since $\Omega_2$ is
open). Next, we show by contradiction that $\Omega_2$ is unbounded. If
$\Omega_2$ is bounded,  then Lemma~\ref{key level sets}--(1) shows there exists
$x\in\supp(\nu)$ such that $x^{-1}\in\Omega_2$. If $x^{-1}<\frak c$ (resp.
$x^{-1}>\frak c$), then it follows from the symmetry 
$\re  f(\overline z)=\re f(z)$ that $\Omega_2$ completely surrounds 
$\Omega_3$ (resp. $\Omega_1$). As a consequence, $\Omega_3\not\subset\R$ 
(resp.~$\Omega_1\not\subset\R$) is a
bounded connected component of $\Omega_-$ which does not contain the origin,
and  Lemma~\ref{key level sets}--(2a) shows this is impossible. The symmetry
$\re  f(\overline z)=\re f(z)$ moreover provides that  $\Omega_{-2}$ is also
unbounded, and (2) follows from the inclusion~\eqref{omega R +} and the fact
that $\Omega_+$ has a unique unbounded connected component, see 
Lemma~\ref{behaviour infinity}.

We now turn to (1). Since $\Omega_2$ is unbounded, then $\Omega_1$ does not
contain the origin and it follows from Lemma~\ref{key level sets}(2a)-(2c)
that $\Omega_1$ is unbounded and has a connected interior. Then, (1) follows
from symmetry $\re  f(\overline z)=\re f(z)$, the inclusion~\eqref{omega R -}
and the fact that $\Omega_-$ has a unique unbounded connected component 
(cf. Lemma~\ref{behaviour infinity}).

Finally,  since $\Omega_3$ is bounded as a byproduct of 
Lemma~\ref{study level sets}--(2), it has a connected interior 
(Lemma~\ref{key level sets}--(2b)) and contains the origin 
(Lemma~\ref{key level sets}--(2a)). Moreover, since $\re f(z)\to -\infty$ as 
$z\to 0$, the origin belongs to its interior and, because of the symmetry 
$\re f(\overline z)=\re f(z)$, necessarily $\Omega_3=\Omega_{-3}$. Hence (3) is established. 
\end{proof}

\begin{proof}[Proof of Proposition \ref{contours prop}]
Given any $\rho>0$ small enough, it follows from the convergence of $\frak c_N$
to $\frak c$ that for every $N_0$ large enough the points 
$\frak c_{N_0}+\rho e^{ i\pi/4}$ and $\frak c_{N_0}+\rho e^{ -i\pi/4}$  belong to 
$\Delta_1$ and $\Delta_{-1}$ respectively. Thus both points belong to 
$\Omega_1$ by 
Lemma~\ref{study level sets}--(1). As a consequence, we can complete the path 
$\Up_o\cup\Up_\times$  
into a  closed contour with a
path $\Up_{res}(N_0)$ lying in the interior of $\Omega_1$. Since $\Up_{res}(N_0)$ lies in the interior of $\Omega_1$, the convergence 
$\frak c_N\rightarrow\frak c$
moreover yields that we can perform the same construction for all $N\geq N_0$
with $\Up_{res}(N)$ in a closed tubular neighbourhood $\mathcal T\subset \Omega_1$ of
$\Up_{res}(N_0)$.  By Lemma \ref{study level sets}--(1) again, we can moreover
choose $\Up_{res}(N_0)$ in a way that it has finite length and only
crosses the real axis at a real number lying on the right of $\mathcal K$. By
construction, this yields that the set $\mathcal T$ is compact and that the
$\Up_{res}(N)$'s can be chosen with a uniformly bounded length as long
as $N\geq N_0$.  Since $\Omega_1\subset \Omega_-$ there exists $K>0$ such that
$\re f(z)\leq \re f(\frak c)-3K$ on $\mathcal T$. Since moreover $\re f_N$ uniformly
converges to $\re f$ on $\mathcal T$  and $\re f_N(\frak c_N)\rightarrow \re f(\frak c
)$ according to Lemma \ref{fN -> f}, we can choose $N_0$
large enough such that $\re f_N\leq \re f+K$ on $\mathcal T$ and $\re f(\frak c)\leq \re
f_N(\frak c_N)+K$. This finally yields that $\re (f_N(z)-f_N(\frak c_N))\leq
-K$ for all $z\in \mathcal T$ and   proves the existence of a contour $\Up$ satisfying
the requirements of Proposition \ref{contours prop}, except for the point (4).

Similarly, the same conclusion for $\Xi$ follows from the same lines but
by using $\Omega_2$ instead of $\Omega_1$ and Lemma \ref{study level sets}--(2).

We now turn to the contour $\Um$. Given any $\rho>0$ small enough, for every $N_0$ large enough the points 
$\frak c_{N_0}-\rho e^{-i\pi/4}$ and $\frak c_{N_0}-\rho e^{i\pi/4}$ belong to 
$\Delta_3$ and $\Delta_{-3}$ respectively. Thus both points belong to 
$\Omega_3$ by 
Lemma~\ref{study level sets}--(3), which contains the origin in its interior. As a consequence, we can complete the path 
$\Um_o\cup\Um_\times$  
into a  closed contour with a
path $\Um_{res}(N_0)$ lying in the interior of $\Omega_3$, which encircles all the $\lambda_j^{-1}$'s smaller than $\frak c_N$ for every $N$ large enough, since we assumed $\liminf_{N\to\infty}\lambda_1>0$. Then, we can follow the same construction as we did for $\Up$ in order to prove the existence of a contour $\Um$ satisfying
the requirements of Proposition \ref{contours prop}, except for the point (4).

Finally, the item (4) of Proposition~\ref{contours prop} is clearly satisfied by construction  since the sets $\Omega_-$ and $\Omega_+$ are disjoint, and the proof of the proposition  is therefore complete.

\end{proof}

\begin{figure}[h]
\centering
\includegraphics[width=\linewidth]{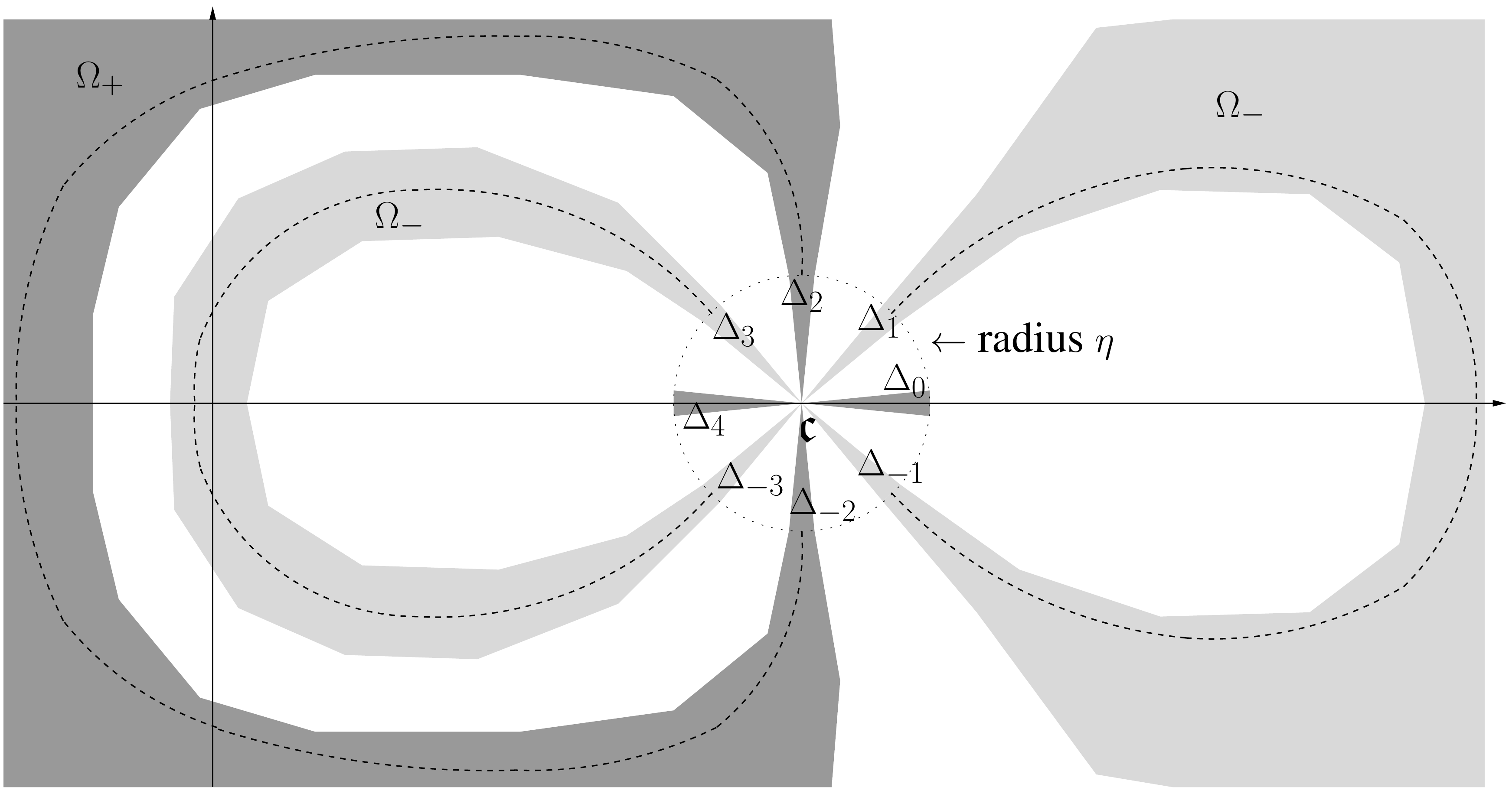}
\caption{Preparation of the saddle point analysis. The two dotted paths in
$\Omega_-$ correspond to $\Upsilon^+_{res}$ and $\Upsilon^-_{res}$ while the dotted path in $\Omega_+$ corresponds to $\Xi_{res}$. } 
\label{fig:ptselle} 
\end{figure}

Equipped with Proposition \ref{contours prop}, we are now in position to establish the remaining estimate towards the proof of Theorem \ref{th:main}. 

\begin{proposition} 
\label{concl Section 2}
For every $s>0$, the following quantity 
\[
N^{1/4}\left(\oint_{\Up\cup \Um}\d z\oint_{\Xi} \d w-\int_{\Upsilon_*}\d z\int_{\Xi_*} \d w\right) \frac{1}{w-z} \, e^{-N^{1/4}x\frac{(z-\frak c_N)}{\sigma_N}
+N^{1/4}y\frac{(w-\frak c_N)}{\sigma_N}+Nf_N(z)-Nf_N(w)}
\]
converges towards zero, uniformly in $x,y\in[-s,s]$, as $N\to\infty$.
\end{proposition}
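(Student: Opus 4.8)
The plan is to split the difference of double integrals into three pieces by means of the decompositions $\Up\cup\Um=\Upsilon_*\cup\Ures$ and $\Xi=\Xi_*\cup\Xi_{res}$ supplied by Proposition~\ref{contours prop}(2), where I write $\Ures=\Up_{res}\cup\Um_{res}$. Expanding $\oint_{\Up\cup\Um}\d z\oint_{\Xi}\d w$ and subtracting $\int_{\Upsilon_*}\d z\int_{\Xi_*}\d w$ leaves exactly the three mixed/residual contributions
\[
\int_{\Upsilon_*}\!\d z\int_{\Xi_{res}}\!\d w\,,\qquad \int_{\Ures}\!\d z\int_{\Xi_*}\!\d w\,,\qquad \int_{\Ures}\!\d z\int_{\Xi_{res}}\!\d w\,,
\]
of the integrand
\[
H_N(z,w)=\frac{1}{w-z}\,e^{-N^{1/4}x\frac{z-\frak c_N}{\sigma_N}+N^{1/4}y\frac{w-\frak c_N}{\sigma_N}+Nf_N(z)-Nf_N(w)}\,,
\]
and it is enough to show that $N^{1/4}$ times each of these vanishes as $N\to\infty$, uniformly for $x,y\in[-s,s]$.

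For each piece I would combine the elementary uniform bounds furnished by Proposition~\ref{contours prop}. First, by item (5a) the contours $\Up\cup\Um$ and $\Xi$ stay in a fixed bounded set; since moreover $\frak c_N\to\frak c$ and $\sigma_N\to(6/g^{(3)}(\frak c))^{1/4}>0$, for $N$ large one has $|z-\frak c_N|\le R_0$ and $\sigma_N\ge\sigma_0>0$ along these contours, hence $\big|e^{-N^{1/4}x(z-\frak c_N)/\sigma_N+N^{1/4}y(w-\frak c_N)/\sigma_N}\big|\le e^{CN^{1/4}}$ for all $x,y\in[-s,s]$, with $C$ independent of $N$. Second, in each of the three products $(z,w)$ ranges over a set to which item (4) applies — either $z\in\Ures\subset\Up\cup\Um$ with $w\in\Xi$, or $z\in\Up\cup\Um$ with $w\in\Xi_{res}$ — so $|w-z|\ge d$. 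Third, by item (5b) the lengths of $\Up\cup\Um$ and $\Xi$ are bounded uniformly in $N$. Thus everything reduces to bounding $|e^{Nf_N(z)-Nf_N(w)}|=e^{N\re(f_N(z)-f_N(\frak c_N))-N\re(f_N(w)-f_N(\frak c_N))}$, i.e. to bounding $\re(f_N(z)-f_N(\frak c_N))$ from above on $\Upsilon_*\cup\Ures$ and $\re(f_N(w)-f_N(\frak c_N))$ from below on $\Xi_*\cup\Xi_{res}$.

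On the residual parts this is immediate from Proposition~\ref{contours prop}(3): $\re(f_N(z)-f_N(\frak c_N))\le-K$ on $\Ures$ and $\re(f_N(w)-f_N(\frak c_N))\ge K$ on $\Xi_{res}$. The only point requiring care — and the genuine heart of the proof — is $\Upsilon_*$ and $\Xi_*$, where there is no saddle-point decay; there I would reuse Lemma~\ref{Taylor fN} exactly as in the proof of Lemma~\ref{local step 1}. On the arcs $\Up_o\cup\Um_o$ one has $|z-\frak c_N|=N^{-1/4}$, and since $\sqrt N\,g_N'(\frak c_N)\to\kappa$ and $g_N^{(3)}(\frak c_N)\to g^{(3)}(\frak c)$, the quadratic, quartic and remainder terms in Lemma~\ref{Taylor fN} are each $\bigO{1/N}$, so $\re(f_N(z)-f_N(\frak c_N))\to0$ uniformly there. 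On the segments $\Up_\times\cup\Um_\times$, $z-\frak c_N$ has modulus $t\in[N^{-1/4},\rho]$ and argument in $\{\pm\pi/4,\pm3\pi/4\}$, so $\re(z-\frak c_N)^2=0$ and $\re(z-\frak c_N)^4=-t^4$, and Lemma~\ref{Taylor fN} together with the choice \eqref{cond rho} of $\rho$ gives $\re(f_N(z)-f_N(\frak c_N))\le-\tfrac{g_N^{(3)}(\frak c_N)}{4!}t^4+\Delta t^5\le-\xi t^4\le0$ for $N$ large, with $\xi=\tfrac12 g^{(3)}(\frak c)/4!>0$; hence $\re(f_N(z)-f_N(\frak c_N))\le K/2$ on all of $\Upsilon_*$ for $N$ large. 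Symmetrically, on $\Xi_*$ one has $w-\frak c_N=it$ with $|t|\le\rho$, so $\re(w-\frak c_N)^2=-t^2$ and $\re(w-\frak c_N)^4=t^4$, and Lemma~\ref{Taylor fN} with \eqref{cond rho} yields $\re(f_N(w)-f_N(\frak c_N))\ge-\tfrac12 g_N'(\frak c_N)t^2+\xi t^4\ge-\tfrac12 g_N'(\frak c_N)t^2\to0$ uniformly, so $\re(f_N(w)-f_N(\frak c_N))\ge-K/2$ on $\Xi_*$ for $N$ large.

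Combining everything, in each of the three products the factor $e^{N\re(f_N(z)-f_N(\frak c_N))-N\re(f_N(w)-f_N(\frak c_N))}$ is at most $e^{-NK/2}$ (bounded respectively by $e^{NK/2-NK}$, $e^{-NK+NK/2}$ and $e^{-2NK}$), so each contribution is bounded in modulus by $N^{1/4}\,d^{-1}\,e^{CN^{1/4}}\,e^{-NK/2}\,L^2$, where $L$ bounds the contour lengths; this tends to $0$ as $N\to\infty$, uniformly in $x,y\in[-s,s]$, which proves the proposition. The main obstacle is precisely the control of $\re(f_N(z)-f_N(\frak c_N))$ on $\Upsilon_*$ and of $\re(f_N(w)-f_N(\frak c_N))$ on $\Xi_*$, the pieces of contour near $\frak c_N$; once these real parts are seen to stay below $K/2$ (resp. above $-K/2$) for large $N$, the rest is the routine principle that the $e^{-cN}$ decay coming from Proposition~\ref{contours prop}(3) swamps the sub-exponential growth $N^{1/4}e^{CN^{1/4}}$ of the prefactor.
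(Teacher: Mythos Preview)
Your proof is correct and follows essentially the same route as the paper's: split via $\Up\cup\Um=\Upsilon_*\cup\Ures$ and $\Xi=\Xi_*\cup\Xi_{res}$, invoke the separation bound of Proposition~\ref{contours prop}(4), the length/boundedness bounds of (5), the sub-exponential growth $e^{CN^{1/4}}$ of the linear factor, and let the $e^{-NK}$ decay from (3) dominate. The only difference is how you control the star contours: the paper recycles the sharper integral estimates \eqref{A3}--\eqref{A5} (giving $\int_{\Xi_*}e^{-N\re(f_N(w)-f_N(\frak c_N))}|\d w|\le C N^{-1/4}$ and the analogue on $\Upsilon_*$), whereas you use the cruder but perfectly sufficient pointwise bounds $\re(f_N(z)-f_N(\frak c_N))\le K/2$ on $\Upsilon_*$ and $\re(f_N(w)-f_N(\frak c_N))\ge -K/2$ on $\Xi_*$, read off directly from Lemma~\ref{Taylor fN} and \eqref{cond rho}. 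Your variant is slightly more self-contained; the paper's reuses work already done.
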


\begin{proof}
Let $s>0$ be fixed. This amounts to showing that 
\eq
\label{int 1}
N^{1/4}\int_{\Up_{res}\cup\Um_{res}}\d z\oint_{\Xi} \d w \, \frac{1}{w-z} e^{-N^{1/4}x\frac{(z-\frak c_N)}{\sigma_N}+N^{1/4}y\frac{(w-\frak c_N)}{\sigma_N}
+Nf_N(z)-Nf_N(w)}
\qe
and 
\eq
\label{int 2}
N^{1/4}\oint_{\Upsilon}\d z\int_{\Xi_{res}} \d w \, \frac{1}{w-z} e^{-N^{1/4}x\frac{(z-\frak c_N)}{\sigma_N}+N^{1/4}y\frac{(w-\frak c_N)}{\sigma_N}+Nf_N(z)-Nf_N(w)}
\qe
converge to zero uniformly in $x,y\in[-s,s]$. First, Lemma \ref{Taylor fN} yields 
\[
\int_{\Upsilon_*}e^{N(\re f_N(w)-\re f_N(\frak c_N))}|\d z|\leq \int_{\Upsilon_*}e^{Ng_N'(\frak c_N)\re (z-\frak c_N)^2+Ng_N^{(3)}(\frak c_N)\re (z-\frak c_N)^4+N|z-\frak c_N|^5}\ ,
\]
and hence, by using the same estimates as in \eqref{A4} an \eqref{A5}, we obtain 
\eq
\int_{\Upsilon_*}e^{N(\re f_N(z)-\re f_N(\frak c_N))}|\d z|\leq \frac{C}{N^{1/4}}\ .
\qe
Similarly,  using instead \eqref{A3}, we get
\eq
\label{estim Xi fN}
\int_{\Xi_*}e^{-N(\re f_N(w)-\re f_N(\frak c_N))}|\d w|\leq \frac{C}{N^{1/4}}\ .
\qe
Next, Proposition \ref{contours prop} (5-b) yields the existence of $L>0$ independent of $N$ such that, for every  $z\in\Upsilon\cup\Xi$, we have $|\re (z-\frak c_N)|\leq L$. Since 
\[
e^{Nf_N(z)-Nf_N(w)}=e^{N(f_N(z)-f_N(\frak c_N))}e^{-N(f_N(w)-f_N(\frak c_N))},
\]
by using moreover Proposition \ref{contours prop} (4) and then Proposition \ref{contours prop} (3-a), we readily obtain
\begin{align*}
& N^{1/4}\Bigg|\int_{\Up_{res}\cup\Um_{res}}\d z\oint_{\Xi} \d w \, \frac{1}{w-z} e^{-N^{1/4}x\frac{(z-\frak c_N)}{\sigma_N}+N^{1/4}y\frac{(w-\frak c_N)}{\sigma_N}
+Nf_N(z)-Nf_N(w)}\Bigg|\\
\leq  & \qquad \frac{N^{1/4}}{d}\, e^{\frac{2N^{1/4}Ls}{\sigma_N}}\int_{\Um_{res}\cup\Up_{res}}e^{N(\re f_N(z)-\re f_N(\frak c_N))}|\d z| \oint_{\Xi}e^{-N(\re f_N(w)-\re f_N(\frak c_N))}|\d w|\\
\leq & \qquad \frac{N^{1/4}}{d} \, e^{\frac{2N^{1/4}Ls}{\sigma_N}-KN}\int_{\Um_{res}\cup\Up_{res}}|\d z| \left(\int_{\Xi_*}e^{-N(\re f_N(w)-\re f_N(\frak c_N))}|\d w|+e^{-NK}\int_{\Xi_{res}}|\d w|\right).
\end{align*}
Because of  Proposition \ref{contours prop} (5-b) and \eqref{estim Xi fN}, this yields the exponential decay of \eqref{int 1} to zero uniform in $x,y\in[-s,s]$. The same holds true for \eqref{int 2} by using the same arguments, up to the replacement of Proposition \ref{contours prop} (3-a) by (3-b), and hence the proof of Proposition \ref{concl Section 2} is complete.
\end{proof}

Finally, we can now easily conclude.

\subsection{Conclusion}
\begin{proof}[Proof of Theorem \ref{th:main}]
 Recalling the integral representation \eqref{tKN int} for  $\widetilde\K_N(x,y)$,
 we split the contour $\Gamma$ into two pieces $\Gamma^-$ and $\Gamma^+$, where $\Gamma^-$ encircles the $\lambda_j^{-1}$'s smaller  than $\frak c_N$, and $\Gamma^+$ the $\lambda_j^{-1}$'s larger  than $\frak c_N$. Then, we deform $\Theta$ so that it encircles $\Gamma^-$. This does not modify the value of the kernel. Indeed, there is no pole at $w=\lambda_j^{-1}$ and the residue picked at $w=z$ reads
$$
\frac{N^{1/4}}{2i\pi \sigma_N}e^{\frac{N^{1/4}}{\sigma_N}(y-x) (z-\frak c_N)}
$$  
and thus vanishes by Cauchy's theorem since the integrand is analytic.

Now, we can deform by analyticity $\Gamma^+$, $\Gamma^-$ and $\Theta$ into the respective contours  $\Upsilon^+$, $\Upsilon^-$ and $\Xi$ provided by Proposition \ref{contours prop}. Hence, $\widetilde \K_N(x,y)$ equals to
 \[
\frac{N^{1/4}}{(2i\pi)^2\sigma_N}\oint_{\Up\cup\Um}\d z\oint_\Xi \d w\, \frac{1}{w-z} \, e^{-N^{1/4}x\frac{(z-\frak c_N)}{\sigma_N}+N^{1/4}y\frac{(w-\frak c_N)}{\sigma_N}} e^{Nf_N(z)-Nf_N(w)},
 \]
 and the uniform convergence \eqref{to show Th5} follows for every $s>0$ from Proposition \ref{concl Section 1} and Proposition \ref{concl Section 2}. The proof of Theorem \ref{th:main} is therefore complete. 
\end{proof}

\section{Hard edge expansion: Proof of Theorem \ref{th:edge-expansion}}
\label{sec:proof-expansion}

In this section, we use basic properties of Fredholm determinants and trace class operators. We refer the reader to \cite{book-simon-2005, book-gohberg-2000} for comprehensive introductions, see also \cite[Section 4.2]{HHN-preprint} for a quick overview. We also use well-known formulas and identities for the Bessel function that can be found in \cite{book-erdelyi-1953}.

\subsection{Preparation and proof of Theorem \ref{th:edge-expansion}}
Recalling the definition \eqref{def:sigma-zeta} of $\sigma_N$ and the definition \eqref{KN} of the kernel $\K_N$, we consider here the scaled kernel
\begin{equation}\label{def:tilde-KN}
\widetilde \K_N(x,y) =\frac 1{N^2\sigma_N} \K_N\left( \frac x{N^2 \sigma_N}, \frac y{N^2 \sigma_N}\right) \ , \qquad x,y\in(0,s)\ , 
\end{equation}
and denote by $\widetilde \K_N$ the associated integral operator acting on $L^2(0,s)$. Thus, if $x_{\min}$ is the smallest random eigenvalue of $\bv M_N$ (see \eqref{x min}), we have 
\eq
\label{xmin det}
\mathbb{P}\Big( N^2 \sigma_N \, x_{\min} \ge s\Big) = \det \big( I-\widetilde \K_N\big)_{L^2(0,s)}\ .
\qe
Recalling the definition \eqref{Bessel kernel} of the Bessel kernel,  \cite[Proposition 6.1]{HHN-preprint} states that, for every $s>0$,  we have uniformly in $x,y\in (0,s)$ as $N\to\infty$,
\eq
\label{unif asymp hard}
\widetilde \K_N(x,y) = \left(\frac xy\right)^{\alpha/2}\K_\Be^{(\alpha)}(x,y) +\bigO{\frac1N}.
\qe
Notice that the limiting kernel appearing in the right hand side is exactly \eqref{Bessel kernel bis} without the pre-factor $(y/x)^{\alpha/2}$, and hence is bounded on $(0,s)\times (0,s)$. The integral operator associated  with this kernel is $\E \K_\Be^{(\alpha)}\E^{-1}$, where $\E$ acts on $L^2(0,s)$ by $\E f(x)=x^{\alpha/2}f(x)$. From \eqref{unif asymp hard}, it is easy to derive the convergence  
\eq
\label{fred conv bessel}
\det \big( I-\widetilde \K_N\big)_{L^2(0,s)}\xrightarrow[N\to\infty]{} \det \big( I- \K_\Be^{(\alpha)}\big)_{L^2(0,s)}=F_\alpha(x)\ .
\qe
Indeed,  \eqref{unif asymp hard} yields $\det ( I-\widetilde \K_N)_{L^2(0,s)}\to\det ( I- \E\K_\Be^{(\alpha)}\E^{-1})_{L^2(0,s)}$ and, for any trace class operators $\A$,$\B$ on $L^2(0,s)$ we have
\eq
\label{det switch}
\det(I-\A\B)_{L^2(0,s)}=\det(I-\B\A)_{L^2(0,s)}\ .
\qe
Thus, \eqref{fred conv bessel} follows because  both the operators $\E$ and $\K_{\Be}^{(\alpha)}\E^{-1}$ are well-defined on $L^2(0,s)$ and trace class when $\alpha\geq 0$, and so are the operators $\E\K_\Be^{(\alpha)}$ and $\E^{-1}$ when $\alpha<0$. See e.g. \cite[Section 6]{HHN-preprint} for a proof.

To prove Theorem  \ref{th:edge-expansion}, the first step is to improve \eqref{unif asymp hard} by making explicit the $1/N$-correction term, and showing the remainder is of order $1/N^2$. More precisely, with $\zeta_N$ defined as  in \eqref{def:sigma-zeta}, we prove the following kernel expansion. 

\begin{proposition} 
\label{prop: kernel exp}
For every $s>0$, we have uniformly in $x,y\in (0,s)$ as $N\to\infty$,
\begin{multline}
\label{unif expan hard}
\widetilde \K_N(x,y) \\
=  \left(\frac xy\right)^{\alpha/2}\left\{\K_\Be^{(\alpha)}(x,y) -  \frac{ \zeta_N}{4\sigma_N^2 N} \Big(\alpha J_\alpha(\sqrt x\,)J_\alpha(\sqrt y\, )+(x-y)\K_\Be^{(\alpha)}(x,y)\Big) \right\} +\bigO{\frac1{N^2}}.
\end{multline}
\end{proposition}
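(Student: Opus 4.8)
The plan is to push the steepest-descent analysis of \cite[Proposition~6.1]{HHN-preprint}---which already yields the leading term \eqref{unif asymp hard}---one order further. Starting from the double contour integral \eqref{KN} for $\K_N$ together with the rescaling \eqref{def:tilde-KN}, I would take the free parameter $q=0$ (so that no spurious $1/N$ factor is produced) and perform the inversion $z\mapsto N\sigma_N/z$, $w\mapsto N\sigma_N/w$, which sends the preimage $\frak c=\infty$ of the hard edge to the origin. Deforming the contours to the steepest-descent ones around the origin (picking up the vanishing residue along $z=w$ exactly as in the Conclusion of the Pearcey proof) and discarding the contributions away from the saddle, which are exponentially small by the very same contour apparatus as in \cite{HHN-preprint}, this brings $\widetilde\K_N$ to
\[
\widetilde\K_N(x,y)=\frac{1}{(2i\pi)^2}\oint\frac{\d z}{z}\oint\frac{\d w}{w}\,\frac{1}{z-w}\Big(\frac zw\Big)^{\alpha}e^{-x/z+y/w}\,\exp\!\Big(\sum_{j=1}^n\Big[\log\Big(1-\tfrac{w}{N\sigma_N\lambda_j}\Big)-\log\Big(1-\tfrac{z}{N\sigma_N\lambda_j}\Big)\Big]\Big)+\bigO{e^{-cN}},
\]
the contours being two fixed circles around $0$, independent of $N$, on which $\{N\sigma_N\lambda_j\}$ lies outside (since $N\sigma_N\lambda_j\to\infty$).

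The core of the argument is the expansion of the remaining sum. Since the $\lambda_j$ lie in a fixed compact subset of $(0,\infty)$ by Assumption~\ref{ass:nu}(a) and $N\sigma_N\to\infty$, one has uniformly in $z$ on the bounded contour
\[
-\sum_{j=1}^n\log\!\Big(1-\tfrac{z}{N\sigma_N\lambda_j}\Big)=\frac{z}{N\sigma_N}\sum_{j=1}^n\frac1{\lambda_j}+\frac{z^2}{2(N\sigma_N)^2}\sum_{j=1}^n\frac1{\lambda_j^2}+\bigO{\tfrac1{N^2}}=\frac z4+\frac{\zeta_N}{16\,N\sigma_N^2}z^2+\bigO{\tfrac1{N^2}},
\]
where I used the definitions \eqref{def:sigma-zeta} of $\sigma_N,\zeta_N$ and the fact that replacing $\sum_{j=1}^n\lambda_j^{-2}$ by $\sum_{j=1}^N\lambda_j^{-2}=\tfrac N8\zeta_N$ costs only $\bigO{1/N^2}$ (the discrepancy is $O(|\alpha|)$ bounded terms). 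Doing the same in $w$ and exponentiating,
\[
\exp\!\Big(\sum_{j=1}^n\Big[\log\Big(1-\tfrac{w}{N\sigma_N\lambda_j}\Big)-\log\Big(1-\tfrac{z}{N\sigma_N\lambda_j}\Big)\Big]\Big)=e^{(z-w)/4}\Big(1+\frac{\zeta_N}{16\,N\sigma_N^2}(z^2-w^2)+\bigO{\tfrac1{N^2}}\Big).
\]
Inserting this, the leading contribution is $(x/y)^{\alpha/2}\Kbe(x,y)$ by \eqref{Bessel kernel bis}, and the $1/N$ contribution is $\frac{\zeta_N}{16\,N\sigma_N^2}$ times
\[
\frac{1}{(2i\pi)^2}\oint\frac{\d z}{z}\oint\frac{\d w}{w}\,\frac{z^2-w^2}{z-w}\Big(\frac zw\Big)^{\alpha}e^{-x/z+z/4+y/w-w/4}=\frac{1}{(2i\pi)^2}\oint\frac{\d z}{z}\oint\frac{\d w}{w}\,(z+w)\Big(\frac zw\Big)^{\alpha}e^{-x/z+z/4+y/w-w/4},
\]
where the pole along $z=w$ has disappeared because $(z^2-w^2)/(z-w)=z+w$.

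This last integral factorizes as a sum of two products of one-variable integrals. Each of those is a classical Schl\"afli representation: $\frac1{2i\pi}\oint u^{-\nu-1}e^{u/4-x/u}\,\d u=(2\sqrt x)^{-\nu}J_\nu(\sqrt x)$ for integer $\nu$, and similarly in $w$ (using $J_{-\nu}=(-1)^\nu J_\nu$). Substituting these four evaluations and using the Bessel recurrences $\sqrt x\,J_\nu'(\sqrt x)=\nu J_\nu(\sqrt x)-\sqrt x\,J_{\nu+1}(\sqrt x)=-\nu J_\nu(\sqrt x)+\sqrt x\,J_{\nu-1}(\sqrt x)$ together with the definition \eqref{Bessel kernel} of $\Kbe$, a short computation shows that the double integral equals $-4\,(x/y)^{\alpha/2}\big(\alpha J_\alpha(\sqrt x)J_\alpha(\sqrt y)+(x-y)\Kbe(x,y)\big)$; hence the $1/N$ term is exactly $-\frac{\zeta_N}{4\sigma_N^2 N}(x/y)^{\alpha/2}\big(\alpha J_\alpha(\sqrt x)J_\alpha(\sqrt y)+(x-y)\Kbe(x,y)\big)$, which is \eqref{unif expan hard}.

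The main obstacle is not this computation but the analytic bookkeeping surrounding it: one must make the contour truncation rigorous and show that all the error terms above are $\bigO{1/N^2}$ \emph{uniformly} for $x,y\in(0,s)$. This means carefully splitting the contours into a bounded saddle part---on which the Taylor expansions of the $\log$-sum are valid with uniform remainder because the $\lambda_j$ are bounded away from $0$ and $N\sigma_N\to\infty$---and an exponentially negligible remainder, reusing the steepest-descent machinery of \cite[Section~6]{HHN-preprint}; and, on the secondary side, getting right the signs and the $J_{-\nu}$ versus $J_\nu$ identities so that the three-term combination collapses precisely to $\alpha J_\alpha J_\alpha+(x-y)\Kbe$ with the stated constant.
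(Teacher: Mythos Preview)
Your approach is essentially the paper's. After the change of variables you arrive at exactly the representation the paper quotes as \eqref{KN bessel int} (with $-N(G_N(z)-G_N(w))$ equal to your log-sum), and from there your Taylor expansion of the log-sum, the factorisation $(z^2-w^2)/(z-w)=z+w$, and the evaluation via Schl\"afli integrals coincide with the paper's computation step for step.

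The one point worth flagging is your framing of the first step as a steepest-descent truncation with an $\bigO{e^{-cN}}$ remainder. This is unnecessary: the double-contour representation on the fixed circles $|z|=r$, $|w|=R$ with $0<r<R<\liminf_N\lambda_1/2$ is an \emph{exact} identity for $\widetilde\K_N(x,y)$ (it is derived as such in \cite[proof of Proposition~6.1]{HHN-preprint}, and the paper simply cites it). No saddle localisation or contour splitting is needed here; the only asymptotics in the whole proof is the Taylor expansion of $G_N$, and its uniformity on the bounded contours is immediate from Assumption~\ref{ass:nu}(a). So the ``analytic bookkeeping'' you flag as the main obstacle essentially evaporates once you recognise the starting formula as exact.
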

The proof of the proposition is deferred to Section \ref{sec:kernel exp}. If $\Q$ is the integral operator on $L^2(0,s)$ with kernel   
\eq
\label{upsilon kernel}
\Q(x,y) =  \frac{ \zeta_N}{4\sigma_N^2 } \Big(\alpha J_\alpha(\sqrt x\,)J_\alpha(\sqrt y\, )+(x-y)\K_\Be^{(\alpha)}(x,y)\Big),
\qe
then Proposition \ref{prop: kernel exp} yields the operator expansion (for the trace class norm on $L^2(0,s)$), 
\eq
\label{operator exp}
\widetilde \K_N = \E \Big( \K_\Be^{(\alpha)} - \frac1N \Q \Big)\E^{-1} +\bigO{\frac1{N^2}}.
\qe
Now, to prove Theorem \ref{th:edge-expansion}, one just has to plug \eqref{operator exp} into the Fredholm determinant \eqref{xmin det}, expand it, and identify the $1/N$ order term. For the last step, we need the following lemma, which essentially relies on a formula established by Tracy and Widom. 
 
\begin{lemma} 
\label{TW formula Bessel}
We have the identity
\[
\Tr \big((I-\K_\Be^{(\alpha)})^{-1} \Q\big)   =   -  \left(\frac{ \alpha\zeta_N}{\sigma_N^2 }\right) s \frac{\d}{\d s}\log\det\big(I-\Kbe\big)_{L^2(0,s)}\ .
\]
\end{lemma}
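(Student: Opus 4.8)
The plan is to exploit the finite-rank structure of $\Q$ to reduce the trace to a single scalar, and then to identify that scalar with a logarithmic derivative of $F_\alpha$ through a Tracy--Widom-type identity for the Bessel kernel.

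First I would set $\varphi(x)=J_\alpha(\sqrt x)$ and $\psi(x)=\tfrac12\sqrt x\,J_\alpha'(\sqrt x)$; for $\alpha\in\mathbb Z$ the Bessel function $J_\alpha$ vanishes like $x^{|\alpha|/2}$ at the origin, so $\varphi,\psi\in L^2(0,s)$. Comparing with \eqref{Bessel kernel} gives the identity $(x-y)\Kbe(x,y)=\varphi(x)\psi(y)-\psi(x)\varphi(y)$, so that the kernel \eqref{upsilon kernel} reads $\Q(x,y)=\tfrac{\zeta_N}{4\sigma_N^2}\big(\alpha\varphi(x)\varphi(y)+\varphi(x)\psi(y)-\psi(x)\varphi(y)\big)$. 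Thus $\Q$ is the finite-rank operator $\tfrac{\zeta_N}{4\sigma_N^2}\big(\alpha\,\varphi\otimes\varphi+\varphi\otimes\psi-\psi\otimes\varphi\big)$ on $L^2(0,s)$, where $(f\otimes h)(x,y)=f(x)h(y)$. Since $F_\alpha(s)>0$ for every $s>0$, the operator $I-\Kbe$ is invertible on $L^2(0,s)$ and $\Tr\big((I-\Kbe)^{-1}(f\otimes h)\big)=\langle(I-\Kbe)^{-1}f,h\rangle$, the brackets denoting the (real) $L^2(0,s)$ inner product. One checks directly from \eqref{Bessel kernel} that $\Kbe(x,y)=\Kbe(y,x)$, so $(I-\Kbe)^{-1}$ is self-adjoint and the terms $\langle(I-\Kbe)^{-1}\varphi,\psi\rangle$ and $\langle(I-\Kbe)^{-1}\psi,\varphi\rangle$ cancel. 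This leaves
\[
\Tr\big((I-\Kbe)^{-1}\Q\big)=\frac{\alpha\zeta_N}{4\sigma_N^2}\,\big\langle(I-\Kbe)^{-1}\varphi,\varphi\big\rangle_{L^2(0,s)}.
\]

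The main step is then the identity $\big\langle(I-\Kbe)^{-1}\varphi,\varphi\big\rangle_{L^2(0,s)}=-4s\,\tfrac{\d}{\d s}\log F_\alpha(s)$, which is in essence a formula of Tracy and Widom \cite{tracy-widom-bessel-94}; I would give a self-contained derivation as follows. A Christoffel--Darboux computation based on the fact that $u\mapsto J_\alpha(\sqrt{ux})$ solves $4u^2 h''+4uh'+(ux-\alpha^2)h=0$ yields $\tfrac{\d}{\d u}\big[4u\big(h_x'(u)h_y(u)-h_x(u)h_y'(u)\big)\big]=-(x-y)J_\alpha(\sqrt{ux})J_\alpha(\sqrt{uy})$ for $h_x(u)=J_\alpha(\sqrt{ux})$; integrating over $u\in(0,1)$ (the endpoint $u=0$ contributing nothing since $\alpha\in\mathbb Z$) and comparing with \eqref{Bessel kernel} gives the representation $\Kbe(x,y)=\tfrac14\int_0^1 J_\alpha(\sqrt{ux})J_\alpha(\sqrt{uy})\,\d u$. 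Conjugating the operator $\Kbe$ on $L^2(0,s)$ by the dilation unitary $U\colon L^2(0,1)\to L^2(0,s)$, $(Uf)(x)=s^{-1/2}f(x/s)$, yields the operator $\widehat\K_s$ on $L^2(0,1)$ with kernel $s\,\Kbe(s\xi,s\eta)=\tfrac14\int_0^s J_\alpha(\sqrt{v\xi})J_\alpha(\sqrt{v\eta})\,\d v$, whose $s$-derivative is the rank-one kernel $\tfrac14 J_\alpha(\sqrt{s\xi})J_\alpha(\sqrt{s\eta})$. Hence, with $\varphi_s(\xi)=J_\alpha(\sqrt{s\xi})$,
\[
\frac{\d}{\d s}\log F_\alpha(s)=\frac{\d}{\d s}\log\det(I-\widehat\K_s)_{L^2(0,1)}=-\Tr\big((I-\widehat\K_s)^{-1}\partial_s\widehat\K_s\big)=-\frac14\big\langle(I-\widehat\K_s)^{-1}\varphi_s,\varphi_s\big\rangle_{L^2(0,1)},
\]
and since $U\varphi_s=s^{-1/2}\varphi$ and $(I-\Kbe)^{-1}=U(I-\widehat\K_s)^{-1}U^{-1}$, the right-hand side equals $-\tfrac1{4s}\big\langle(I-\Kbe)^{-1}\varphi,\varphi\big\rangle_{L^2(0,s)}$, which is the claimed identity. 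Combining this with the previous display gives $\Tr\big((I-\Kbe)^{-1}\Q\big)=-\big(\tfrac{\alpha\zeta_N}{\sigma_N^2}\big)\,s\,\tfrac{\d}{\d s}\log\det(I-\Kbe)_{L^2(0,s)}$, as asserted.

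The delicate part is the operator-theoretic bookkeeping: invertibility of $I-\Kbe$ and of $I-\widehat\K_s$, the trace-class property of the products involved, and the differentiability in $s$ justifying the formula for the logarithmic derivative of the Fredholm determinant. These points can be handled exactly as in \cite[Section 6]{HHN-preprint}, using that $J_\alpha(x)\sim c_\alpha x^{|\alpha|/2}$ near the origin for $\alpha\in\mathbb Z$, which keeps all kernels bounded on $(0,s)\times(0,s)$ and $\varphi,\psi$ in $L^2(0,s)$; alternatively, one may simply invoke the Tracy--Widom identity for the Bessel resolvent directly and bypass the self-contained derivation of the middle identity.
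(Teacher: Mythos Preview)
Your argument is correct and reaches the same endpoint as the paper, but the mechanics differ in two places worth noting. First, to kill the $(x-y)\Kbe(x,y)$ term you write it as the rank-two piece $\varphi\otimes\psi-\psi\otimes\varphi$ and use self-adjointness of $(I-\Kbe)^{-1}$ to cancel the cross inner products; the paper instead recognizes this term as the commutator $[\mathrm M,\Kbe]$ with the multiplication operator $\mathrm Mf(x)=xf(x)$ and observes that $\Tr\big((I-\Kbe)^{-1}[\mathrm M,\Kbe]\big)=0$ by cyclicity of the trace together with the fact that $\Kbe$ commutes with its own resolvent. Second, for the identity $\langle(I-\Kbe)^{-1}\varphi,\varphi\rangle_{L^2(0,s)}=-4s\,\frac{\d}{\d s}\log F_\alpha(s)$ the paper simply quotes Tracy--Widom \cite{tracy-widom-bessel-94} (in the form $4s\,\mathrm R(s,s)=\langle(I-\Kbe)^{-1}\varphi,\varphi\rangle$ combined with $\frac{\d}{\d s}\log\det(I-\Kbe)=-\mathrm R(s,s)$), whereas you supply a self-contained derivation via the integral representation $\Kbe(x,y)=\tfrac14\int_0^1 J_\alpha(\sqrt{ux})J_\alpha(\sqrt{uy})\,\d u$ and a dilation to a fixed interval. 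The paper's route is shorter and leans on the literature; yours is longer but makes the proof independent of \cite{tracy-widom-bessel-94} and transparently exhibits why the factor $4s$ appears.
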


\begin{proof} Introduce the trace class operator $\mathrm{M}f(x)=x f(x)$ on $L^2(0,s)$, so that one can write 
\[
\Q = \frac{ \zeta_N}{4\sigma_N^2 } \Big(\alpha J_\alpha(\sqrt \cdot\,)\otimes J_\alpha(\sqrt \cdot\, )+[\mathrm{M},\Kbe]\Big)\ ,
\]
where $[\A,\B]=\A\B-\B\A$.
If  $\mathrm R(x,y)$ is the kernel of the resolvent $\Kbe(I-\Kbe)^{-1}$, then we have 
\[
\frac{\d}{\d s}\log\det\big(I-\Kbe\big)_{L^2(0,s)} = -\mathrm R(s,s).
\]
Equations (2.5) and (2.21) of \cite{tracy-widom-bessel-94} then provide the identity 
\eq
\label{TW bessel}
4s\mathrm R(s,s)=  \big\langle  J_\alpha(\sqrt \cdot\,), (I-\K_\Be^{(\alpha)})^{-1} J_\alpha(\sqrt \cdot\,)\big\rangle_{L^2(0,s)}\ .
\qe
Since the right hand side of \eqref{TW bessel} equals $\Tr \big((I-\K_\Be^{(\alpha)})^{-1} J_\alpha(\sqrt \cdot\,)\otimes  J_\alpha(\sqrt \cdot\,)\big)$, the lemma would follow provided that  
\[
\Tr \big((I-\K_\Be^{(\alpha)})^{-1} [\mathrm{M},\Kbe]\big) =0\ ,
\]
 but this is obvious since $\Tr(\A\B)=\Tr(\B\A)$ and $\Kbe$ commutes with $(I-\Kbe)^{-1}$.
\end{proof}

It is now easy to prove Theorem \ref{th:edge-expansion}.

\begin{proof}[Proof of Theorem \ref{th:edge-expansion}]
The identity \eqref{xmin det} and the operator expansion \eqref{operator exp} yield
\eq
\label{bessel A}
\mathbb{P}\Big( N^2 \sigma_N \, x_{\min} \ge s\Big)  =\det \big( I- \E\big( \K_\Be^{(\alpha)} -\frac1N \Q    \big)\E^{-1}\big)_{L^2(0,s)}  +\bigO{\frac1{N^2}}.
\qe
By plugging the identity $xJ'_{\alpha}(x) = \alpha J_{\alpha}(x) - x J_{\alpha+1}(x)$ into \eqref{Bessel kernel}, we obtain
\begin{equation}
\label{Bessel kernel ter}
\Kbe(x,y) = \frac{\sqrt{x} J_{\alpha+1}(\sqrt{x}\, ) J_{\alpha} (\sqrt{y}\,) - \sqrt{y} J_{\alpha+1} (\sqrt{y}\, ) J_{\alpha}(\sqrt{x}\,)}{2(x-y)}\ .
\end{equation}
Combined with the asymptotic behaviour (which follows from the definition \eqref{series rep Bessel}),
\[
J_\alpha(\sqrt x) =
\frac{(\sign(\alpha))^\alpha}{|\alpha| !}
\left(\frac{\sqrt x}{2}\right)^{|\alpha|} 
\big(1+O(x)\big)  \ , \qquad x\to 0_+\ ,
\]
it is then easy to check from \eqref{upsilon kernel} that both $\E \Q$ and $\Q\E^{-1}$ are trace class and thus, by \eqref{det switch},
\eq
\label{bessel B}
\det \big( I- \E\big( \K_\Be^{(\alpha)} -\frac1N \Q    \big)\E^{-1}\big)_{L^2(0,s)} =\det \big( I-  \K_\Be^{(\alpha)} +\frac1N \Q\big)_{L^2(0,s)} .
\qe
Finally, using the following expansion of a Fredholm determinant:
$$
\det( I-B) = 1-\Tr(B) + {\mathcal O} (\| B\|^2)\ ,
$$ 
where $B$ is trace class with trace norm $\|B\|<1$, we obtain
\begin{align}
 \det \big( I- \K_\Be^{(\alpha)} +\frac1N \Q \big)_{L^2(0,s)}
&  = \ \det \big( I- \K_\Be^{(\alpha)}\big)_{L^2(0,s)} \det\big(I+\frac1N( I-\K_\Be^{(\alpha)})^{-1} \Q \big)_{L^2(0,s)} \nonumber\\
& = \ \det \big( I- \K_\Be^{(\alpha)}\big)_{L^2(0,s)} \Big( 1 +\frac1N \Tr\big( (I-\K_\Be^{(\alpha)})^{-1} \Q\big) +\bigO{N^{-2}}\Big) \nonumber\\
\label{bessel C}
\end{align}
and Theorem \ref{th:edge-expansion} follows by combining \eqref{bessel A}, \eqref{bessel B}, \eqref{bessel C} together with Lemma \ref{TW formula Bessel}.
\end{proof}

We now turn to the proof of the proposition. 

\subsection{Proof of the kernel expansion}
\label{sec:kernel exp}
\begin{proof}[Proof of Proposition \ref{prop: kernel exp}] First, by using the representation \eqref{Bessel kernel ter} of the Bessel kernel and then  the identity $2\alpha J_{\alpha}(x) = x J_{\alpha+1}(x) + x J_{\alpha-1}(x)$, we have
\begin{align*}
& \alpha J_\alpha(\sqrt x\,)J_\alpha(\sqrt y\, )+(x-y)\K_\Be^{(\alpha)}(x,y) \\
&  = \;\frac12\big(2\alpha J_{\alpha}(\sqrt x\,)J_\alpha(\sqrt y\, ) + \sqrt x  J_{\alpha+1}(\sqrt x\,)J_\alpha(\sqrt y\, ) -  \sqrt y  J_\alpha(\sqrt x\,)J_{\alpha+1}(\sqrt y\, ) \big)\\
&  = \;\frac12\big(   \sqrt x  J_{\alpha+1}(\sqrt x\,)J_\alpha(\sqrt y\, ) +  \sqrt y  J_\alpha(\sqrt x\,)J_{\alpha-1}(\sqrt y\, ) \big).
\end{align*}
Thus, to prove Proposition \ref{prop: kernel exp} is equivalent to show
 \begin{multline}
\label{unif expan 2}
\widetilde \K_N(x,y) =  \left(\frac xy\right)^{\alpha/2} \K_\Be^{(\alpha)}(x,y) \\
-  \frac{ \zeta_N}{8\sigma_N^2 N}  \left(\frac xy\right)^{\alpha/2}\Big( \sqrt x  J_{\alpha+1}(\sqrt x\,)J_\alpha(\sqrt y\, ) +  \sqrt y  J_\alpha(\sqrt x\,)J_{\alpha-1}(\sqrt y\, ) \Big) +\bigO{\frac1{N^2}}
\end{multline}
uniformly in $x,y\in(0,s)$ as $N\to\infty$. To do so, let $0<r<R<\liminf_N\lambda_1/2$  and introduce the map 
$$
G_N(z)\ =\  \frac 1N \sum_{j=1}^n \log\left( \frac z{N\sigma_N} - \lambda_j\right).
$$
Recall that Assumption \ref{ass:nu} yields $\liminf_N \lambda_1>0$ and thus one can choose  
a determination of the logarithm so that $G_N$ is well-defined and holomorphic on 
$\{ z\in \C\, , \ |z|<R+1 \}$ for every $N$ large enough. In the proof of  \cite[Proposition 6.1]{HHN-preprint}, the following representation has been obtained
\eq
\label{KN bessel int}
\widetilde \K_N(x,y)= \frac 1{(2i\pi)^2} \oint_{|z|=r} \frac {\d z}z \oint_{|w|=R} \frac {\d w}w \frac 1{z-w} \left( \frac zw\right)^\alpha e^{-\frac xz +\frac yw -N\left( G_N(z) - G_N(w)\right)}.
\qe
Recalling the definitions \eqref{def:sigma-zeta} of $\sigma_N$ and $\zeta_N$, straightforward computations yield
\begin{eqnarray*}
G'_N(z) &=& \frac{1}{N^2\sigma_N} \sum_{j=1}^n \left( \frac z{N\sigma_N} - \lambda_j\right)^{-1}\ ,\quad G'_N(0)=-\frac 1{4N}\\
G''_N(z) &=& - \frac 1{N^3 \sigma_N^2} \sum_{j=1}^n \left(\frac z{N\sigma_N} - \lambda_j\right)^{-2}\ ,\quad G''_N(0)= - \frac {\zeta_N}{8N^2\sigma_N^2} \\
G_N^{(3)}(z) &=& \frac 2{N^4 \sigma_N^3} \sum_{j=1}^n \left(\frac z{N\sigma_N} - \lambda_j\right)^{-3}\ ,
\end{eqnarray*}
and hence we have the Taylor expansion
\eq
\label{Taylor hard}
G_N(z)= G_N(0) - \frac {z}{4N}- \frac {\zeta_N}{16N^2\sigma_N^2}z^2 +\bigO{\frac 1{N^3}},
\qe 
 uniformly valid for $|z|\le R+1$. Plugging this Taylor expansion into \eqref{KN bessel int} and recalling the contour integral representation \eqref{Bessel kernel bis} of the Bessel kernel, we readily obtain as $N\to\infty$,
\begin{eqnarray}
\label{Taylor kernel hard}
\widetilde \K_N(x,y)&=& \frac 1{(2i\pi)^2} \oint_{|z|=r} \frac {\d z}z \oint_{|w|=R} \frac {\d w}w \frac 1{z-w} \left( \frac zw\right)^\alpha 
e^{-\frac xz +\frac yw +\frac z4 -\frac w4}\nonumber\\
&&\qquad
 \times \exp\left(\frac {\zeta_N}{16N\sigma_N^2}  (z^2-w^2)+{\mathcal O}\left( \frac 1{N^2}\right)\right)\nonumber\\
 &=& \frac 1{(2i\pi)^2} \oint_{|z|=r} \frac {\d z}z \oint_{|w|=R} \frac {\d w}w \frac 1{z-w} \left( \frac zw\right)^\alpha 
e^{-\frac xz +\frac yw +\frac z4 -\frac w4}\nonumber\\
&&\qquad \times \left( 1+\frac{\zeta_N}{16N\sigma_N^2} (z^2-w^2) + {\mathcal O}\left( \frac 1{N^2}\right) \right)\nonumber\\
 &=& \left(\frac xy\right)^{\alpha/2} \Kbe(x,y) \nonumber\\
 &&\qquad + \left(\frac{\zeta_N}{16N\sigma_N^2}\right) \frac 1{(2i\pi)^2} \oint_{|z|=r} \frac {\d z}z \oint_{|w|=R} \frac {\d w}w \, (z+w) \left( \frac zw\right)^\alpha 
e^{-\frac xz +\frac yw +\frac z4 -\frac w4} \nonumber\\
&& \qquad \qquad +\; \bigO{\frac1{N^2}}
\end{eqnarray}
uniformly for $x,y\in (0,s)$. In the light of \eqref{unif expan 2}, we are left to show that
\begin{multline}
 \frac 1{(2i\pi)^2} \oint_{|z|=r} \frac {\d z}z \oint_{|w|=R} \frac {\d w}w \, (z+w) \left( \frac zw\right)^\alpha 
e^{-\frac xz +\frac yw +\frac z4 -\frac w4}\\
= -   2\left(\frac xy\right)^{\alpha/2}\Big( \sqrt x  J_{\alpha+1}(\sqrt x\,)J_\alpha(\sqrt y\, ) + \sqrt y  J_\alpha(\sqrt x\,)J_{\alpha-1}(\sqrt y\, ) \Big)
\end{multline}
in order to complete the proof of the proposition. But this easily follows from the contour integral representations of the Bessel function, 
\eq
\label{int rep Bessel}
J_{\alpha}(\sqrt{x}\,) = \frac{(-1)^{\alpha}}{2i\pi(2\sqrt{x}\,)^{\alpha}} \oint_{|z|=r} z^{\alpha} e^{-\frac xz +\frac z4}\frac {\d z}z\ ,\quad 
J_{\alpha}(\sqrt{y}\,) = (-1)^{\alpha} \frac{(2\sqrt{y}\,)^\alpha}{2i\pi} \oint_{|w|=R} \frac{e^{\frac yw - \frac w4}}{w^\alpha} \frac{\d w}{w}\ ,
\qe
see for instance \cite[Eq. (164) and (165)]{HHN-preprint} and perform the respective changes of variables $z\mapsto -z^{-1}$ and $w\mapsto -w^{-1}$. The proof of the proposition is therefore complete.

\end{proof}

\begin{remark} 
\label{next order exp}
By extending the Taylor expansion \eqref{Taylor hard} to higher order terms, the computation \eqref{Taylor kernel hard} easily yields the kernel expansion as $N\to\infty$, uniform in $x,y\in(0,s)$,
\[
\widetilde \K_N(x,y) = \left(\frac xy\right)^{\alpha/2} \Big\{\Kbe(x,y)  + \sum_{\ell=1}^L \frac{1}{N^\ell}\Q_N^{(\ell)}(x,y)\Big\} +\bigO{\frac{1}{N^{L+1}}}
\]
for every $L\geq 1$. The kernels $\Q_N^{(\ell)}(x,y) $ can be expressed in terms of sums of Bessel functions (by using \eqref{int rep Bessel}). By plugging this formula into the Fredholm determinant \eqref{xmin det}, and expanding it, this yields an asymptotic expansion of the form \eqref{hard edge exp ?}, although we are not able to provide a simple representation for the  coefficients $C_{N,\ell}^{(\alpha)}(s)$ when $\ell\geq 2$. It would be interesting to identify these coefficients in terms of $F_\alpha(s)$ if it is possible.
\end{remark}


\begin{thebibliography}{W}


\bibliographystyle{babplain} 







\bibitem{ACvM12}

M. Adler, M. Cafasso, P. van Moerbeke,
\emph{Nonlinear PDEs for gap probabilities in random matrices and KP theory}, 
Physica D: Nonlinear Phenomena 
\bt{241} (2012), 2265--2284.

\bibitem{AGZ} G. Anderson, A. Guionnet,  O. Zeitouni,
\emph{ An Introduction to Random Matrices},
 Cambridge University Press, 2009.
 

\bibitem{BBP-2005} J. Baik, G. Ben Arous, S. P\'ech\'e,
\emph{Phase transition of the largest eigenvalue for nonnull complex sample covariance matrices}, 
Ann. Probab. \bt{33} (2005), 1643--1697.



\bibitem{BC12} M. Bertola, M. Cafasso,
\emph{The transition between the gap probabilities from the Pearcey to the Airy Process; a Riemann--Hilbert approach}, 
Int. Math. Res. Not.  2012 no. 7, 1519--1568.




\bibitem{BK07} P. M. Bleher, A.B.J. Kuijlaars, 
\emph{Large n limit of Gaussian random matrices with external source, part III: double scaling limit}, 
Comm. Math. Phys. \bt{270} (2007), 481--517.




\bibitem{bornemann-2014-note} F. Bornemann,
\emph{A note on the expansion of the smallest eigenvalue distribution of the LUE at the hard edge},
preprint 3p (2015), arXiv:1504.00235.


\bibitem{brezin-hikami-98} E. Br\'ezin, S. Hikami, 
\emph{Universal singularity at the closure of a gap in a random matrix theory}, 
Phys. Rev. E \bt{57} (1998), 4140--4149.

\bibitem{brezin-hikami-98-2} E. Br\'ezin, S. Hikami, 
\emph{Level spacing of random matrices in an external source}, 
Phys. Rev. E \bt{58} (1998), 7176--7185.






\bibitem{capitaine-peche-2014-preprint}
M. Capitaine, S. P\'ech\'e, 
\emph{Fluctuations at the edge of the spectrum of the full rank deformed GUE},
  preprint 44p (2014), arXiv:1402.2262.
  
  
\bibitem{DM1} E. Duse, A. Metcalfe,
\emph{Asymptotic Geometry of Discrete Interlaced Patterns: Part I},
preprint 57p (2015), arXiv:1412.6653.


\bibitem{DM2} E. Duse, A. Metcalfe,
\emph{Universal edge fluctuations of discrete interlaced particle systems}.
In preparation. 




\bibitem{EGP-preprint} A. Edelman, A. Guionnet, S. P\'ech\'e,
\emph{Beyond universality in random matrix theory},
preprint 42p (2014), arXiv:1405.7590.

\bibitem {book-erdelyi-1953}
A. Erd\'elyi, W. Magnus, F. Oberhettinger, F. G. Tricomi, 
\emph{Higher Transcendental
Functions}, Vol. 2 (McGraw-Hill, New York, 1953).

\bibitem{EK} N. El Karoui, 
\emph{Tracy-Widom limit for the largest eigenvalue of a large class of complex sample covariance matrices}, 
Ann. Probab. \bt{35} (2007),  663--714.





\bibitem{F} P. J. Forrester, 
\emph{The spectrum edge of random matrix ensembles}, 
Nucl. Phys. B \bt{402} (1993), 709--728.

\bibitem{book-gohberg-2000}
I. Gohberg, S. Goldberg, N. Krupnik,
\emph{Traces and determinants of linear operators}, \btxvolumelong {}\ \btxvolumefont {116}
  \btxofserieslong {}\ \btxtitlefont {Operator Theory: Advances and
  Applications}.
\newblock \btxpublisherfont {Birkh{\"a}user Verlag, Basel}, 2000\ifbtxprintISBN
  {, \mbox{\btxISBN~\btxISBNfont {3-7643-6177-8}}}.
\newblock {\latintext
  \btxurlfont{http://dx.doi.org/10.1007/978-3-0348-8401-3}}.
  

\bibitem{HHN-preprint}  
W. Hachem, A. Hardy,  J. Najim, 
\emph{Large complex correlated Wishart matrices: Fluctuations and asymptotic independence at the edges},  
to appear in Ann. Probab. Preprint 76p (2014), arXiv:1409.7548.


\bibitem {HHN-esaim-preprint}
W. Hachem, A. Hardy,  J. Najim, 
\emph{A survey on the eigenvalues local behaviour of large complex correlated Wishart matrices},
to appear in ESAIM: Proceedings and Surveys.






\bibitem{Hu} J. B. Hough, M. Krishnapur, Y. Peres,  B. Vir\'ag, 
\emph{Determinantal processes and independence}, 
Probab. Surveys  \bt{3} (2006), 206--229.

\bibitem{Jo} K. Johansson, 
\emph{Shape fluctuations and random matrices}, 
Comm. Math. Phys. \bt{209} (2000), 437--476.

\bibitem{JoR} K. Johansson, 
\emph{Random matrices and determinantal processes}, 
Mathematical Statistical Physics: Lecture Notes of the Les Houches Summer School 2005 (Bovier et al., eds.), Elsevier, 2006, 1--55.

\bibitem{Joh01} I.M. Johnstone, 
\emph{On the distribution of the largest eigenvalue in principal components 
analysis}, Ann. Statist., \bt{29} (2001), 295--327.

\bibitem {knowles-yin-2014-preprint} A. Knowles, J. Yin.
 \emph {Anisotropic local laws for random matrices}, preprint 60p (2014), arXiv:1410.3516.


\bibitem{LS14} J.O. Lee, K. Schnelli,
\emph{ Tracy-widom distribution for the largest eigenvalue of real
sample covariance matrices with general population},
preprint 35p (2014), arXiv:1409.4979.
  



\bibitem{mar-and-pas-67} V. A. Marchenko and L. A. Pastur,  
\emph{Distribution of eigenvalues for some sets of random matrices},
Math. USSR-Sb. \bt{1} (1967),   457--483.


\bibitem{Mo2} M. Y. Mo, 
\emph{Universality in complex Wishart ensembles for general covariance matrices with 2 distinct eigenvalues},
Journal of Multivariate Analysis \bt{101} (2009), 1203--1225.


\bibitem{OR07} A. Okounkov, N. Reshetikhin,
\emph{Random skew plane partitions and the Pearcey process},
Comm. Math. Phys. \bt{269} (2007), 571--609.

\bibitem{On} A. Onatski, 
\emph{The Tracy-Widom limit for the largest eigenvalues of singular complex Wishart matrices},
Ann. Appl. Probab. \bt{18} (2008), 470--490.

\bibitem{PS} A. Perret, G. Schehr, 
\emph{Finite N corrections to the limiting distribution of the smallest eigenvalue of Wishart complex matrices},
preprint 28p (2015), arXiv:1506.02387.

 
 
\bibitem{book-rudin-real-complex}
W. Rudin,
\emph{Real and complex analysis},
McGraw-Hill Book Co., third edition, New York, 1987.



  

\bibitem{silverstein-choi-1995} J. W. Silverstein,  S.-I. Choi, 
\emph{Analysis of the limiting spectral distribution of large-dimensional random matrices},
J. Multivariate Anal. \bt{54} (1995), 295--309. 


\bibitem {book-simon-2005}
B. Simon, \emph{Trace ideals and their applications}, Mathematical Surveys and
  Monographs 120, second edition, American Mathematical Society, Providence, RI, 2005. 

  





 
 \bibitem{tracy-widom-bessel-94}
C. Tracy,  H. Widom, 
\emph{
Level spacing distributions and the Bessel kernel}, 
Comm. Math. Phys. \bt{161} (1994), 289--309.

\bibitem{tracy-widom-pearcey} C. Tracy, H. Widom, 
\emph{The Pearcey process},
 Commun. Math. Phys. \bt{ 263} (2006), 381--400.






\bibitem{wishart-1928}
J. Wishart, 
\emph{The generalised product moment distribution in samples from a normal multivariate population},
Biometrika, 20A((1-2)):32--52, 1928.


\end{thebibliography}
\end{document}